\patchcmd\Gread@eps{\@inputcheck#1 }{\@inputcheck"#1"\relax}{}{}
\numberwithin{equation}{section}
\newcommand{\intav}[1]{\mathchoice {\mathop{\vrule width 6pt height 3 pt depth  -2.5pt
\kern -8pt \intop}\nolimits_{\kern -6pt#1}} {\mathop{\vrule width
5pt height 3  pt depth -2.6pt \kern -6pt \intop}\nolimits_{#1}}
{\mathop{\vrule width 5pt height 3 pt depth -2.6pt \kern -6pt
\intop}\nolimits_{#1}} {\mathop{\vrule width 5pt height 3 pt depth
-2.6pt \kern -6pt \intop}\nolimits_{#1}}}
\def\polhk#1{\setbox0=\hbox{#1}{\ooalign{\hidewidth\lower1.5ex\hbox{`}\hidewidth\crcr\unhbox0}}}
\newcommand{\supp}{\operatorname{supp}}
\renewcommand{\div}{\operatorname{div}}
 \newcommand{\Rr}{\mathbb R}
\newcommand{\Nn}{\mathbb N}
\renewcommand{\div}{\operatorname{div}}
\newcommand{\dist}{\operatorname{dist}}
\DeclareMathOperator*{\esslimsup}{ess\,lim\,sup}
\newtheorem{Theorem}{Theorem}
\numberwithin{Theorem}{section} 
\newtheorem{Definition}{Definition}
\numberwithin{Definition}{section} 
\newtheorem{Lemma}{Lemma}
\numberwithin{Lemma}{section} 
\newtheorem{Corollary}{Corollary}
\numberwithin{Corollary}{section} 
\newtheorem{Proposition}{Proposition}
\numberwithin{Proposition}{section} 
\newtheorem{assump}{}
\newtheoremstyle{break}
  {\topsep}{\topsep}%
  {\itshape}{}%
  {\bfseries}{}%
  {\newline}{}%
\theoremstyle{break}
\theoremstyle{definition}
\theoremstyle{remark}
\newtheorem{Remark}{Remark}
\newcommand{\p}{\partial}
\title{A fully nonlinear transmission problem degenerating on the interface}
\author[D. Giovagnoli and D. Jesus]{Davide Giovagnoli \and David Jesus}
\address{David Jesus, Università di Bologna, Bologna, Piazza di Porta San Donato 5, 40126, Italy}{}
\email{david.jesus2@unibo.it}
\address{Davide Giovagnoli, Università di Bologna, Bologna, Piazza di Porta San Donato 5, 40126, Italy}{}
\email{d.giovagnoli@unibo.it}
\begin{document}

\maketitle
{\sc Abstract.} In this paper we prove that solutions to a transmission problem degenerating on the interface are H\"older differentiable up to the interface with universal estimates. Furthermore, we obtain a sharper pointwise $C^{1,\alpha(\cdot)}$ with  optimal variable exponent and uniform estimates.
\footnote{\small  This research is partially supported by PRIN 2022 7HX33Z - CUP J53D23003610006. 
Keywords: Transmission problem, Degenerate fully nonlinear equation  \\ MSC: 35B65, 35J70, 35Q74, 74A50
}

\section{Introduction}

We are concerned with regularity of viscosity solutions to the following degenerate fixed transmission problem
\begin{align}\label{eq:main}
    \begin{cases}
        |x_d-\Psi(x')|^{a(x)} F^\pm(D^2u )=f^\pm,  \quad &\mbox{ in } \Omega^\pm:=B_1\cap\{\pm (x_d-\Psi(x'))>0\}\\
        u^+_\nu-u^-_\nu=g\, &\mbox{ on } \Gamma:=B_1\cap \{x_d=\Psi(x')\}
    \end{cases}
\end{align}
where $0 \leq a < 1$ and  $\nu$ is the normal of $\Gamma$ pointing to $\Omega^+$. Note that the operator degenerates as a distance to the interface.

Transmission problems model physical phenomena in which the behavior changes across some fixed interface and have attracted considerable attention throughout the years, starting with the pioneering work of Picone \cite{Picone} in elasticity in the 1950s and subsequent works \cite{Campanato,Lions,Schechter,Stampacchia}. For a comprehensive study of these problems see \cite{Borsuk}. For other recent developments, see \cite{CSCS,Citti-Ferrari,DFS2014,DFS2015A,DFS2015B,DFS2018,DFS2019,Kriventsov,Li-Nirenberg,Li-Vogelius} and references therein.

On the other hand, equations which degenerate as a distance to a hypersurface generalize the well-known Muckenhoupt weights, which have many important applications in harmonic analysis, partial differential equations, and related areas. These equations have been extensively considered, see for example \cite{AFV, Caffarelli-Silvestre_2007, Dong-Kim_2015, Fabes-Jerison-Kenig_1982, Fabes-Kenig-Serapioni_1982, Krylov_1994, Krylov_1999, Sire-Terracini-Vita_2020, Sire-Terracini-Vita_2021}. Of particular interest is the equation in divergence form
\begin{align}\label{eq_VS}
    \mathcal{L}_au:=\div\left(|y|^aA(x,y) Du\right)=f
\end{align}
which has a close relation to the fractional Laplacian $(-\Delta)^s$ for $0<s<1$ by the famous paper of Caffarelli and Silvestre \cite{Caffarelli-Silvestre_2007}. The nondivergent case was considered in the recent paper \cite{JS} by the second author and Yannick Sire. 

We would like to remark that elliptic and parabolic equations
involving coefficients degenerating as the distance to some submanifold appear
very naturally in the analysis of singular manifolds with conic or conic-edge
singularities (see for instance \cite{haoFang1, haoFang2}). In this situation, the degeneracy exponent depends explicitly on the sharpness of the edge angle, which can be variable. When we study $C$ viscosity solutions of such problems, however, we note that uniqueness of solutions fails drastically, since the hypersurface where the ellipticity degenerates creates a natural interface, which disconnects the domain into two components. The strategy developed in \cite{JS} to circumvent this problem was to instead consider $L^p$ viscosity solutions which do not see sets of zero measure. Thus, in order to reinstate uniqueness in the context of $C$ viscosity solutions, it is natural to impose a transmission condition across the interface.

The purpose of this paper is to improve the results in \cite{SS}, by considering operators which degenerate as we approach the interface with a variable exponent, and we obtain sharp pointwise regularity which depends on the pointwise value of this exponent. Since the estimates are uniform with respect to the point, as a direct consequence of our theorem, we additionally get local $C^{1,\alpha}$ regularity up to the interface with uniform estimates. Furthermore we provide a more direct proof of the $C^{1,\alpha}$ regularity.

\section{Main result}

Here and in the rest of the paper, the constant $\alpha_0(\lambda,\Lambda,d)$ always refers to the universal exponent corresponding to the interior $C^{1,\alpha_0}(B_{1/2})$ regularity of solutions to the problem $F(D^2u)=0$ in $B_1$ where $F$ is any $(\lambda,\Lambda)$ uniformly elliptic operator, see \cite[Corollary 5.7]{CC}. 
Henceforth $\omega(x)$ denotes the function  $\omega(x)=  |x_d-\Psi(x')|^{a(x)}$.

Throughout the paper we shall always work under the following assumptions.

\begin{assump}[Uniform Ellipticity]\label{Assumption1}
There exist constants $0<\lambda\leq \Lambda<\infty$  such that for every matrices $M, N\in S(d)$ with $N\geq 0$, 
\[
\lambda\, \,|N|\leq F^\pm(M+N)-F^\pm(M)\leq \Lambda\, \, |N|.
\]
\end{assump}
For future reference, we  define the modulus of continuity $\gamma_h$ of a function $h$ by
\[
\gamma_h(t)=\sup_{|x-y|\leq t}|h(x)-h(y)|, \quad \mbox{ for } t>0.
\]
\begin{assump}[Continuity and integrability]\label{Assumption2}
    Assume that  $f^\pm\in C(\Omega^\pm)\cap L^\infty(\Omega^\pm)\cap L^p_\omega(\Omega^\pm)$ with $ p=d(1+1/\bar a)/2>d $, $g\in C_c^{\alpha_0}(\Gamma)$, $\Psi\in C^{1,\alpha_0}(\overline{B_1'})$, and $a$  has modulus of continuity satisfying
\[
\limsup_{t\to 0}\ln\left(\frac{1}{t}\right)\gamma_{a}(t)=0,
\]
and $\bar a:= \max_{B_1} a(x) < 1$. 
With $f^\pm \in L_\omega^p(\Omega^\pm)$ we mean that $f^\pm \omega^{-1} \in L^p(\Omega^\pm)$.
\end{assump}

\begin{assump}[Behavior near $\Gamma$]\label{Assumption3}
 There exists $\theta$ small depending only on $d$, $\lambda$, $\Lambda$ such that
\[
\sup_{M\in S(d)\setminus \{0\}}\frac{|F^+(M)-F^-(M)|}{|M|}\leq \theta.
\]
\end{assump}

We obtain two results. The first one is a uniform local regularity result, which is actually a direct consequence of the second result. We include it because it is stated in a more conventional way.

\begin{Theorem} \label{Thm:local} Suppose that the assumptions \ref{Assumption1}-\ref{Assumption3} are in force and let $u \in C(B_1)$  be a viscosity solution to the transmission problem \eqref{eq:main}. Then $u \in C^{1, \alpha}(\overline{\Omega_{1/2}^{\pm}})$ with $\alpha = \min \{\alpha_0^-, 1- \bar a\}$ where $\Omega_{1/2}^\pm := \Omega^\pm \cap B_{1/2}$. Moreover 
it is endowed with the local estimate up to the interface
\begin{equation*}
\displaystyle
    \| u\|_{C^{1,\alpha} (\overline{\Omega_{1/2}^{\pm}})} \leq C \| \Psi \|_{C^{1,\alpha_0}(\overline{B'_1})} \left(  \| u\|_{L^\infty(B_1)} + \|g \|_{C^{\alpha_0}(\Gamma)} + \|f^+\|_{L^p_\omega(\Omega^+)}+\|f^-\|_{L^p_\omega(\Omega^-)} \right)
\end{equation*}
where $ p=d(1+1/\bar a)/2>d $ and  $C$ universal depending only on $d, \lambda, \Lambda, \bar a$ and $\alpha_0$.
\end{Theorem}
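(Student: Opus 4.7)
The plan is to flatten the interface, handle the region away from $\Gamma$ by classical interior theory, and obtain a uniform boundary $C^{1,\alpha}$ estimate at every point of $\Gamma$ through a compactness/flatness-improvement scheme. The target exponent $\alpha=\min\{\alpha_0^-,1-\bar a\}$ reflects the two mechanisms at play: $\alpha_0$ is the universal exponent of the homogeneous limiting problem, while $1-\bar a$ is the threshold dictated by the degenerate weight under the rescaling.

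\textbf{Flattening and interior estimate.} Apply the change of variables $\Phi(x',x_d)=(x',x_d-\Psi(x'))$ to straighten $\Gamma$ into $\{y_d=0\}$. Since $\Psi\in C^{1,\alpha_0}(\overline{B_1'})$, the pulled-back operators still satisfy Assumptions \ref{Assumption1}--\ref{Assumption3} (with constants depending on $\|\Psi\|_{C^{1,\alpha_0}}$) and the weight becomes $|y_d|^{a(\Phi^{-1}(y))}$; the $L^p_\omega$ norm of $f^\pm$ and the $C^{\alpha_0}$ norm of $g$ are preserved up to multiplicative constants. After normalization we may thus assume $\Psi\equiv 0$ and unit norm for the data. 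On $\{|x_d|\geq\delta\}\cap B_{3/4}$ the weight is bounded above and below, so the equation reads $F^\pm(D^2 u)=\omega^{-1}f^\pm\in L^\infty\cap L^p$, and standard $L^p$-viscosity theory (\cite{CC}) yields $\|u\|_{C^{1,\alpha_0^-}(\{|x_d|\geq\delta\}\cap B_{1/2})}\leq C(\delta)$.

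\textbf{Boundary $C^{1,\alpha}$ at the flattened interface.} Fix $x_0\in\Gamma\cap B_{1/2}$ and, after translation, assume $x_0=0$. The core is an approximation lemma: for every $\eta>0$ there exists $\varepsilon>0$ such that, whenever $\theta$, $\|\Psi\|_{C^{1,\alpha_0}}$, and the data norms are bounded by $\varepsilon$, one has $\|u-\ell\|_{L^\infty(B_{1/2})}\leq\eta$ for some affine $\ell$. This is proved by contradiction: a violating sequence $u_k$, together with the Krylov--Safonov H\"older estimate applied to the non-degenerate equation $F^\pm(D^2u)=\omega^{-1}f^\pm$ on each side, admits a locally uniform limit $u_\infty$ solving $F(D^2u_\infty)=0$ in $\{\pm x_d>0\}$ with homogeneous transmission $u_{\infty,\nu}^+=u_{\infty,\nu}^-$ on $\{x_d=0\}$; the $F^+$--$F^-$ closeness then upgrades this to a single uniformly elliptic equation across the interface, hence $u_\infty\in C^{1,\alpha_0}$, contradicting the assumption. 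Iterating via the rescaling $u_r(x)=(u(rx)-\ell(rx))/r^{1+\alpha}$, the equation for $u_r$ inherits a factor $r^{1-\alpha-a(rx)}$ on the right-hand side, which remains controlled precisely when $\alpha\leq 1-\bar a$, while the log-Lipschitz modulus of $a$ keeps $r^{a(rx)-a(0)}$ bounded and the exponent $p$ in Assumption \ref{Assumption2} is tuned so that the weighted $L^p_\omega$ norm of the rescaled forcing contracts. This produces a Cauchy sequence of affine functions $\{\ell_k\}$ with $|\ell_{k+1}-\ell_k|\leq Cr^{k(1+\alpha)}$, yielding a first-order expansion of $u$ at $0$ with $C^{1,\alpha}$ remainder, with constants independent of $x_0\in\Gamma$.

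\textbf{Gluing and main obstacle.} Combining the uniform pointwise boundary estimate with the interior bound of the first step through the classical two-scale argument (splitting on whether $|x-y|$ is small or large compared to $\min(d(x,\Gamma),d(y,\Gamma))$) yields the $C^{1,\alpha}(\overline{\Omega_{1/2}^\pm})$ estimate with the stated dependence on the data. The main obstacle is the approximation lemma: one must secure enough compactness to extract a limit despite the degeneracy, and then correctly identify the limit equation across $\{x_d=0\}$, where the homogeneous transmission condition and the closeness of $F^+,F^-$ must conspire to produce a single uniformly elliptic equation across the interface. Once this identification is achieved, the rescaling iteration and the two-scale gluing are routine.
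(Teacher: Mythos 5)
Your overall architecture (interior estimate away from $\Gamma$, a pointwise estimate at each interface point via approximation plus geometric iteration, then a two-scale gluing) matches the paper's, but two steps as written would fail. First, you cannot flatten the interface by $\Phi(x)=(x',x_d-\Psi(x'))$: since $\Psi$ is only $C^{1,\alpha_0}$, the pulled-back Hessian contains first-order terms with coefficients $D^2\Psi$ that exist only distributionally, so the transformed problem does not satisfy Assumptions \ref{Assumption1}--\ref{Assumption3} and is not even a well-posed viscosity problem. The paper never flattens; it keeps the curved interface and exploits that the rescaled graphs $\Psi_k(x')=\rho^{-k}\Psi(\rho^k x')$ satisfy $\|\Psi_k\|_{C^{1,\alpha_0}}\leq\rho^{k\alpha_0}\|\Psi\|_{C^{1,\alpha_0}}$, so flatness is only attained in the limit of the iteration. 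Second, your iteration $u_r=(u(rx)-\ell(rx))/r^{1+\alpha}$ with a \emph{single} affine function cannot close: the rescaled transmission datum is $g(rx)/r^{\alpha}$, which blows up whenever $g(x_0)\neq 0$, destroying the smallness needed to reapply the approximation lemma (and a single first-order expansion at $x_0$ would force $Du$ to be continuous across $\Gamma$, contradicting the prescribed jump). The paper uses two one-sided affine functions $\ell_k^\pm$ with $\ell_k^+-\ell_k^-=g(0)\nu(0)\cdot x$, so that the rescaled datum is $(g(\rho^k x)-g(0)\nu(0)\cdot\nu(\rho^k x))/\rho^{k\alpha_k}$, controlled by $[g]_{C^{\alpha_0}}+|g(0)|[\Psi]_{C^{1,\alpha_0}}$.

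Your approximation lemma is also under-argued at exactly the points where the paper invests most of its effort. Krylov--Safonov on each side gives interior H\"older bounds but no uniform modulus of continuity \emph{across} the degenerate interface; the compactness you invoke requires the H\"older estimate up to $\Gamma$, which in this paper rests on the degenerate ABP and Harnack inequality of Sections \ref{Section:ABP}--\ref{Section:Harnack}. Likewise, identifying the limit of your contradiction sequence as a solution of the flat transmission problem (including the transmission condition in the viscosity sense) is precisely the stability result of Section \ref{Section:approximation}, which requires new $W^{2,p}$ barriers of the form $P(x')+p^+x_d^+-p^-x_d^-+C|x_d|^{2-a}$ whose Hessians blow up to compensate the degeneracy; the standard smooth test functions do not work. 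Note also that the paper's actual approximation lemma is not a compactness argument at all: it directly solves $F_\varepsilon(D^2v_\varepsilon,x)=0$ for an interpolated operator $F_\varepsilon=h_\varepsilon F^++(1-h_\varepsilon)F^-$ and controls $u-v$ by showing it lies in $S_\omega(f^\pm)$ with transmission datum $g$ and applying the degenerate ABP. Either route can work, but in your write-up the decisive technical content is asserted rather than supplied.
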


The next theorem constitutes the main result of this paper.

\begin{Theorem} \label{Thm:main} Suppose that the assumptions \ref{Assumption1}-\ref{Assumption3} are in force and let $u \in C(B_1)$  be a viscosity solution to the transmission problem \eqref{eq:main}. 
Then, for every $x_0\in \Omega_{1/2}^{\pm}$, $u \in C^{1, \alpha(x_0)}(x_0)$ with $\alpha(x_0) = \min \{\alpha_0^-, 1- a(x_0)\}$. Moreover 
it is endowed with the local estimate up to the interface
\begin{equation}
\displaystyle
    \| u\|_{C^{1,\alpha(\cdot)} \left(\overline{\Omega_{1/2}^{\pm}}\right)} \leq C \| \Psi \|_{C^{1,\alpha_0}(\overline{B'_1})} \left(  \| u\|_{L^\infty(B_1)} + \|g \|_{C^{\alpha_0}(\Gamma)} + \|f^+\|_{L^p_\omega(\Omega^+)}+\|f^-\|_{L^p_\omega(\Omega^-)} \right)
\end{equation}
where $ p=d(1+1/\bar a)/2>d $ and  $C$ universal depending only on $d, \lambda, \Lambda, \bar a$ and $\alpha_0$.
\end{Theorem}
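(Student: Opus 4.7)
The plan is to prove Theorem 2.2 directly, from which Theorem 2.1 will follow by infimizing $\alpha(x_0)$ over $x_0$. The scheme is an improvement-of-flatness iteration at an arbitrary point $x_0\in\overline{\Omega^\pm_{1/2}}$, driven by an approximation lemma that reduces the degenerate, variable-exponent transmission problem at each dyadic scale around $x_0$ to a frozen, non-degenerate one with zero jump. As a preliminary step I would flatten $\Gamma$ to $\{x_d=0\}$ via a $C^{1,\alpha_0}$ diffeomorphism; this preserves Assumptions \ref{Assumption1}--\ref{Assumption3} with constants depending on $\|\Psi\|_{C^{1,\alpha_0}}$, and turns the weight into $|x_d|^{a(x)}$.

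Fix $x_0$ and set $\alpha^\ast:=\min\{\alpha_0^-,1-a(x_0)\}$. After normalizing so that $\|u\|_{L^\infty(B_1)}$ and the data norms are all bounded by a small universal $\delta$, I would build inductively a sequence of affine functions $\ell_k(x)=c_k+p_k\cdot(x-x_0)$ and fix a universal $\rho\in(0,1)$ so that
\begin{equation*}
\sup_{B_{\rho^k}(x_0)}|u-\ell_k|\le \rho^{k(1+\alpha^\ast)},\qquad |c_k-c_{k-1}|+\rho^k|p_k-p_{k-1}|\le C\rho^{k(1+\alpha^\ast)}.
\end{equation*}
Telescoping then yields an affine limit $L_{x_0}$ with $|u(x)-L_{x_0}(x)|\le C|x-x_0|^{1+\alpha^\ast}$ near $x_0$, and uniformity of every constant in the construction with respect to $x_0$ promotes this to the stated seminorm bound (and, by taking the worst exponent, to Theorem 2.1).

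The inductive step rests on a compactness-contradiction approximation lemma applied to the rescaling $v_k(y):=\rho^{-k(1+\alpha^\ast)}[u(x_0+\rho^k y)-\ell_k(x_0+\rho^k y)]$. This $v_k$ solves a rescaled degenerate transmission problem on $B_1$ whose weighted source $\tilde f^\pm$ and interface jump $\tilde g$ are small by the induction hypothesis together with the choices $\alpha^\ast\le \alpha_0^-$ and $\alpha^\ast\le 1-a(x_0)$ (the latter controlling the scaling of $f^\pm$ against the weight thanks to $p=d(1+1/\bar a)/2$). The approximation lemma then asserts that $v_k$ is $L^\infty$-close to a solution $h$ of the frozen problem $F(D^2h)=0$ in $B_{1/2}^\pm$ with $h^+_\nu=h^-_\nu$ on $\{y_d=0\}$, Assumption \ref{Assumption3} being used to force $F^+\equiv F^-$ in the limit. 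Classical interior regularity for this flat-interface problem supplies a tangent affine function $L_h$ with $|h-L_h|(y)\le C|y|^{1+\alpha_0}$, and we set $\ell_{k+1}(x):=\ell_k(x)+\rho^{k(1+\alpha^\ast)}L_h\bigl((x-x_0)/\rho^k\bigr)$; taking $\rho$ small closes the induction.

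The main obstacle — and the reason the log-Dini assumption on $a$ appears in Assumption \ref{Assumption2} — is that the weight $|x_d|^{a(x)}$ is not scale invariant when the exponent varies. After the rescaling above, the coefficient becomes
\begin{equation*}
|x_d|^{a(x)}=\rho^{k\,a(x_0)}|y_d|^{a(x_0)}\cdot\bigl(\rho^k|y_d|\bigr)^{a(x_0+\rho^k y)-a(x_0)},
\end{equation*}
so for the frozen, constant-exponent equation to remain a small perturbation of the rescaled one \emph{uniformly in} $k$, the factor $(\rho^k|y_d|)^{a(x_0+\rho^k y)-a(x_0)}$ must stay bounded on $B_1$ as $k\to\infty$. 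This is equivalent to $\gamma_a(\rho^k)\log(1/\rho^k)\to 0$, which is exactly the assumption on $a$ in \ref{Assumption2}. The sharp exponent $\alpha^\ast=\min\{\alpha_0^-,1-a(x_0)\}$ then emerges as the precise threshold between the $C^{1,\alpha_0}$ regularity of the frozen transmission problem (contributing $\alpha_0^-$) and the scaling of the source term against the weight (contributing $1-a(x_0)$), and the log-Dini condition is exactly what is needed to run the geometric iteration at all scales without loss.
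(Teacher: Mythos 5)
Your overall scheme (improvement of flatness at a point, approximation by a frozen non-degenerate profile, with the log-Dini condition on $a$ controlling the mismatch $\rho^{k(a(x_0)-a(\rho^k y))}$ across scales) is the right one and matches the architecture of Sections \ref{Section:approxalpha0}--\ref{Section:Gradient_reg} of the paper. But there is a genuine gap at the heart of the iteration: you carry a \emph{single} affine function $\ell_k$ and measure $\sup_{B_{\rho^k}(x_0)}|u-\ell_k|$ over the full ball. At an interface point with $g(x_0)\neq 0$ this cannot close. The transmission condition prescribes a jump $u^+_\nu-u^-_\nu=g$ of the normal derivative, so $u$ is only Lipschitz across $\Gamma$ and no single affine function can approximate it to order $\rho^{k(1+\alpha^\ast)}$ from both sides; equivalently, the rescaled jump of your $v_k$ is of order $g(\rho^k y)/\rho^{k\alpha^\ast}$, which blows up unless $g(x_0)=0$, so the smallness hypothesis of the approximation lemma is violated after one step. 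The paper's fix (Proposition \ref{Pro:geom_ite}) is to carry a \emph{pair} of affine functions $\ell_k^\pm$ with $\ell_k^+-\ell_k^-=g(0)\nu(0)\cdot x$, so that the leading part of the jump is subtracted and the rescaled transmission datum $g_k=(g(\rho^k\cdot)-g(0)\nu(0)\cdot\nu(\rho^k\cdot))/\rho^{k\alpha_k}$ stays of size $\delta$. This in turn makes $v_k^\pm$ discontinuous across the interface, so the approximation lemma cannot be applied to it directly; the paper introduces an auxiliary solution $w$ of a Dirichlet problem with data $\tfrac12(v_k^++v_k^-)$ on the rescaled interface and invokes boundary gradient estimates (\cite{BGMW}) to compare. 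None of this machinery appears in your proposal, and without it the induction fails at every interface point where $g$ does not vanish.

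Two further points. First, the preliminary flattening of $\Gamma$ by a $C^{1,\alpha_0}$ diffeomorphism is not available for a second-order fully nonlinear equation: the change of variables introduces second derivatives of the map, which do not exist for a merely $C^{1,\alpha_0}$ graph. The paper instead keeps the curved interface and exploits that the rescalings $\Psi_k(x')=\rho^{-k}\Psi(\rho^kx')$ satisfy $\|\Psi_k\|_{C^{1,\alpha_0}}\leq\rho^{k\alpha_0}\|\Psi\|_{C^{1,\alpha_0}}$, so the interface flattens only asymptotically along the iteration. Second, Assumption \ref{Assumption3} does not force $F^+\equiv F^-$ in any limit: $\theta$ is a fixed small constant, and its role is to guarantee (via \cite{SS}) that the limiting profile with zero jump and two $\theta$-close operators is $C^{1,\alpha_0^-}$ across the flat interface; ``classical interior regularity'' for a single operator is not what is being invoked. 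By contrast, your use of a fixed exponent $\alpha^\ast$ in place of the paper's increasing sequence $\alpha_k=\min\{\alpha_0^-,1-\sup_{B_{\rho^k}}a\}$ is essentially a matter of bookkeeping and can be repaired using Remark \ref{Rmk:mod_cont_exp}; the two issues above are the substantive ones.
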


Note that as a simple consequence of this result, solutions of \eqref{eq:main} are Lipschitz continuous across the interface. This the best we can expect for this problem with $g \not\equiv 0$, since it prescribes a jump of the gradient.

The H\"older spaces with variable exponent are defined in Section \ref{Section:preliminaries}. The expression $\alpha=\min\{\alpha_0^-,1-a\}$ should be understood in the following sense: if $\alpha_0>1-a$, then solutions are $C^{1,\alpha}$ for $\alpha=1-a$. On the other hand, if $\alpha_0\leq 1-a$, then solutions are $C^{1,\alpha}$ for every $\alpha<\alpha_0$. 

This regularity result is optimal for this set of assumptions, as illustrated by the counterexamples constructed in \cite{JS}.

Now we comment on the assumptions \ref{Assumption1}-\ref{Assumption3}. 
From \ref{Assumption1} the operators $\omega(x)F^\pm$ are degenerate elliptic according to the definition given in \cite{JS} respectively in $\Omega^\pm$; \ref{Assumption2} settles the  summability assumption on $f^\pm$ necessary for boundary estimates as in \cite{BGMW}. The choice of $\alpha_0$ as the exponent for the H\"older continuity of the data is done for simplicity and without loss of generality. Moreover, the modulus of continuity of $a$ ensures that we get sharp pointwise regularity, see \cite{J}; the $\theta$ in \ref{Assumption3} is the same as in \cite{SS} and is used to obtain regularity at points in $\Gamma$ where $g=0$.

\subsection{Strategy of the results.}
The approach of the paper combines the arguments in \cite{JS} about fully nonlinear equations with degeneracy of distance-type, with the treatment of fully nonlinear transmission problem due to \cite{SS}, together with some ideas from \cite{J} to obtain sharper results. The presence of a degeneracy in the interface poses new difficulties and requires new arguments that will considerably differ from \cite{SS}, particularly when constructing barriers. The first part of the work focuses on developing an ABP, which is then used to obtain a Harnack inequality and H\"older regularity for solutions of \eqref{eq:main}, and is the subject of Section \ref{Section:ABP} and Section \ref{Section:Harnack}.

Then we include a comprehensive theory for the transmission problem with the flat interface and constant exponents. For this problem we provide in Section \ref{Section:flat} uniqueness and existence. 
 
In Section \ref{Section:approximation} we prove a stability result, which roughly states that as we zoom onto the interface, the problem approximates the flat one. Although this result was not needed to prove our main results, we decided to include it because stability results are interesting on their own. This constitutes maybe the most challenging part of the paper, since the usual barriers don't work for the degenerate case. Therefore, we have to build new $W^{2,p}$ barriers whose Hessian blows up near the interface, which compensates the degenerate behavior of the equation. This proof substantially differs from the literature. 

The approach we have pursued up to this point is very flexible, as it also allows for the consideration of general operators with variable coefficients of the form $F^\pm(M,x)$ with the assumptions as in \cite{JS}. However, in Lemma \ref{Lm:approxalpha0} as well as Proposition \ref{Pro:geom_ite}, it seems crucial to prove that the difference of solutions belongs to the $S$ class, which is a very delicate and difficult issue in the theory of viscosity solutions (as is discussed for instance in \cite[Section 1.1]{SS1}), therefore we were forced to drop the full generality.

Section \ref{Section:approxalpha0} consists of the approximation lemma which relates our equation with the limiting profile where both $f^\pm$ and $g$ are zero. This proof is very similar to \cite{SS}.

The purpose of Section \ref{Section:Gradient_reg} is to adapt the argument developed by Caffarelli in \cite{C} which consists of importing the improved regularity for the limiting profile (corresponding to the case $g=0$ and $f^\pm=0$) to our equation. This is done via geometric iterations, where we subsequently approximate our solution by affine functions and rescale, zooming into a point in the interface. To get a two-sided improvement up to the interface, we need to consider different affine functions from each side. By writing explicitly how these affine functions depend on the transmission condition $g$, we are able to maintain the smallness assumption on $\|g\|_{C^{\alpha_0}}$ which considerably simplifies the proof, since the case when $g\approx0$ is much simpler than the case where $g$ is large. Furthermore, the proper rescaling is dependent on the degeneracy, which has variable exponent. Thus to obtain the sharp regularity, a careful application of the argument developed in \cite{J} has to be performed, by considering a different rescaling power in each iteration. Once we have pointwise regularity at the interface, we can patch it with the classical interior regularity in the usual way.

\section{Preliminaries} \label{Section:preliminaries}
We start with the definition of viscosity solutions to \eqref{eq:main}.
\begin{Definition}[Viscosity solution]
    We say $u\in USC(B_1)$ is a viscosity subsolution of \eqref{eq:main} if for any $\varphi$ touching $u$ from above at $x_0\in B_1$, the following hold:
    \begin{enumerate}
        \item  if $x_0\in \Omega^\pm$ and $\varphi\in C^2(B_\delta(x_0)\cap \Omega^\pm)$ then
        \begin{align*}
           |(x_0)_d-\Psi(x'_0)|^{a(x_0)}  F^\pm(D^2\varphi(x_0))\geq f^\pm(x_0);
        \end{align*}
        \item if $x_0\in \Gamma$ and $\varphi\in C^1(B_\delta(x_0)\cap \overline{\Omega^-})\cap C^1(B_\delta(x_0)\cap \overline{\Omega^+})$ then
        \begin{align*}
            \varphi^+_\nu(x_0)-\varphi^-_\nu(x_0)\geq g(x_0).
        \end{align*}
    \end{enumerate}
    We say $u\in LSC(B_1)$ is a viscosity supersolution of \eqref{eq:main} if for any $\varphi$ touching $u$ from below at $x_0\in B_1$, the following hold:
    \begin{enumerate}
        \item  if $x_0\in \Omega^\pm$ and $\varphi\in C^2(B_\delta(x_0)\cap \Omega^\pm)$ then
        \begin{align*}
            |(x_0)_d-\Psi(x'_0)|^{a(x_0)}F^\pm(D^2\varphi(x_0))\leq f^\pm(x_0);
        \end{align*}
        \item if $x_0\in \Gamma$ and $\varphi\in C^1(B_\delta(x_0)\cap \overline{\Omega^-})\cap C^1(B_\delta(x_0)\cap \overline{\Omega^+})$ then
        \begin{align*}
            \varphi^+_\nu(x_0)-\varphi^-_\nu(x_0)\leq g(x_0).
        \end{align*}
    \end{enumerate}
    We say $u\in C(B_1)$ is a viscosity solution of \eqref{eq:main} if it is both a subsolution and a supersolution.
\end{Definition}

We say $u\in \underline{S}_{\, \omega}(f^\pm)$ if
\begin{align*}
    \omega(x)\mathcal{M}^+(D^2u)\geq f^\pm, \mbox{ in } \Omega^\pm.
\end{align*}
We define $\overline{S}_\omega(f^\pm)$, $S^*_\omega(f^\pm)$ and $S_\omega(f^\pm)$ in the obvious way, following the notation in \cite{CC}.

Next we define the H\"older spaces with variable exponent and present a more general characterization of these spaces, obtained in \cite[Proposition 1]{J}.
\begin{Definition}\label{Def:Holder_variable}
    We say $u\in C^{1,\alpha(\cdot)}(\Omega)$ if $u\in C^1(\Omega)$ and
    \begin{align*}
        [u]_{C^{1,\alpha(\cdot)}(\Omega)}:=\sup_{x\neq y\in \Omega}\frac{|Du(x)-Du(y)|}{|x-y|^{\alpha(x)}}<\infty.
    \end{align*}
    We define the norm $\|u\|_{C^{1,\alpha(\cdot)}(\Omega)}=\|u\|_{L^\infty(\Omega)} + \|Du\|_{L^\infty(\Omega)} + [u]_{C^{1,\alpha(\cdot)}(\Omega)}$.
\end{Definition}

\begin{Proposition}\label{Pro:new_holder}
Suppose we can find $r<1$ and sequences of affine functions $\ell_k(x)=a_k+b_k\cdot x $ and exponents $\alpha_k\uparrow \alpha(x_0)$, such that $(\alpha_k-\alpha)=o(k)$ and
\[
\left\|u-\ell_k\right\|_{L^\infty(B_{r^k}(x_0))}\leq K r^{k(1+\alpha_k)}.
\]
Then $u\in C^{1,\alpha(x_0)}(x_0)$ with constant $C(r)K$ and $0<\alpha(x_0)<1$.
\end{Proposition}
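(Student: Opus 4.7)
The plan is to follow the standard Caffarelli-type iteration argument for converting discrete scale-by-scale affine approximations into pointwise $C^{1,\alpha(x_0)}$ regularity, adapted to the variable-exponent setting.

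First I would extract, from the approximation hypothesis at two consecutive scales, control on the discrete differences of the coefficients. Apply the triangle inequality on $B_{r^{k+1}}(x_0)$:
\begin{equation*}
\| \ell_k - \ell_{k+1}\|_{L^\infty(B_{r^{k+1}}(x_0))} \leq \| u - \ell_k \|_{L^\infty(B_{r^k}(x_0))} + \| u - \ell_{k+1}\|_{L^\infty(B_{r^{k+1}}(x_0))} \leq 2 K r^{k(1+\alpha_k)}.
\end{equation*}
Since $\ell_k - \ell_{k+1}$ is affine, this $L^\infty$ estimate on a ball of radius $r^{k+1}$ transfers, by inspecting the value at the center and on a diameter, into
\begin{equation*}
|a_k - a_{k+1}| \leq C K r^{k(1+\alpha_k)}, \qquad |b_k - b_{k+1}| \leq C K r^{-(k+1)} r^{k(1+\alpha_k)} = C K r^{k\alpha_k - 1}.
\end{equation*}

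Next I would show that $(a_k)$ and $(b_k)$ are Cauchy. Since $\alpha_k \uparrow \alpha(x_0) > 0$ we eventually have $\alpha_k \geq \alpha(x_0)/2$, so the geometric series $\sum_k r^{k\alpha_k}$ converges, yielding limits $a_\infty$, $b_\infty$ and a limiting affine function $\ell_\infty(x) = a_\infty + b_\infty \cdot x$. Evaluating at $x=x_0$ (and passing to the limit in the approximation inequality) identifies $a_\infty + b_\infty \cdot x_0 = u(x_0)$ and shows that $u$ is differentiable at $x_0$ with $Du(x_0) = b_\infty$. Telescoping the tail gives, after a short computation,
\begin{equation*}
| a_k - a_\infty | + r^k |b_k - b_\infty| \leq C(r) K \, r^{k(1+\alpha_k)},
\end{equation*}
and combining with the hypothesis yields $\| u - \ell_\infty \|_{L^\infty(B_{r^k}(x_0))} \leq C(r) K \, r^{k(1+\alpha_k)}$.

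Finally I would convert the discrete dyadic estimate into a continuous pointwise estimate of exponent $\alpha(x_0)$. Given $\rho \in (0,1)$, pick $k$ with $r^{k+1} \leq \rho < r^k$; then
\begin{equation*}
\| u - \ell_\infty \|_{L^\infty(B_\rho(x_0))} \leq C(r) K \, r^{k(1+\alpha_k)} = C(r) K \, \rho^{1+\alpha(x_0)} \cdot r^{k(\alpha_k - \alpha(x_0))}.
\end{equation*}
The main (really the only) obstacle is controlling the last factor $r^{k(\alpha_k - \alpha(x_0))}$, which is the exact point where the rate condition $\alpha_k \to \alpha(x_0)$ in the hypothesis enters: it ensures that $k(\alpha(x_0) - \alpha_k) \log(1/r)$ stays bounded, so the error factor is uniformly bounded in $k$. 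Plugging this in gives $\|u - \ell_\infty\|_{L^\infty(B_\rho(x_0))} \leq C(r) K \rho^{1+\alpha(x_0)}$, which is exactly the pointwise $C^{1,\alpha(x_0)}$ condition at $x_0$ with constant $C(r) K$, as claimed.
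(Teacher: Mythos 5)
Your argument is correct and is the standard telescoping argument for converting scale-by-scale affine approximations into pointwise $C^{1,\alpha}$ regularity: the coefficient differences are controlled by comparing consecutive scales, the limits $a_\infty,b_\infty$ exist by geometric summability since $\alpha_k\geq\alpha(x_0)/2$ eventually, and the only delicate factor $r^{k(\alpha_k-\alpha(x_0))}$ is correctly absorbed using the rate condition, which (as you rightly interpret) means $k(\alpha(x_0)-\alpha_k)\to 0$. Note that the paper does not prove this proposition itself but imports it from \cite[Proposition 1]{J}; your proof is the expected one and matches that argument.
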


The equivalence between Definition \ref{Def:Holder_variable} and Proposition \ref{Pro:new_holder} is given by the following proposition.

\begin{Proposition}
Let $u\in C^1(\Omega)$ and $\alpha(\cdot)$ be uniformly continuous with modulus of continuity satisfying
$$ \lim_{t\to 0^+}\ln (t) \,\gamma_\alpha(t)=0. $$
Then $u\in C^{1,\alpha(\cdot)}(B_1)$ if and only if for every $x\in B_1$, there exists an affine function $l_x$ such that for every $r>0$ it holds
\begin{align}\label{eq_variable_approximation}
    \|u-l_x\|_{L^\infty(B_r(x))}\leq Cr^{1+\alpha(x)},
\end{align}
where $C>0$ is independent of $x$.
\end{Proposition}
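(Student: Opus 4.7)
The plan is to prove the equivalence in the standard way: in one direction use the first-order Taylor polynomial, and in the other direction compare two affine approximants on overlapping balls, with the log-modulus assumption on $\alpha(\cdot)$ entering decisively at the end.

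For the forward implication, assume $u\in C^{1,\alpha(\cdot)}(B_1)$ and take $l_x(y)=u(x)+Du(x)\cdot(y-x)$. By the fundamental theorem of calculus,
\[
u(y)-l_x(y)=\int_0^1\bigl(Du(x+t(y-x))-Du(x)\bigr)\cdot(y-x)\,dt,
\]
and applying the seminorm bound $|Du(z)-Du(x)|\le [u]_{C^{1,\alpha(\cdot)}}|z-x|^{\alpha(x)}$ (with $x$ in the first slot) inside the integral yields $|u(y)-l_x(y)|\le \tfrac{[u]_{C^{1,\alpha(\cdot)}}}{1+\alpha(x)}\,|y-x|^{1+\alpha(x)}$, giving \eqref{eq_variable_approximation} with $C=[u]_{C^{1,\alpha(\cdot)}}$, independent of $x$.

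For the reverse implication, I first identify $l_x$. Taking $r\to 0$ in \eqref{eq_variable_approximation} evaluated at $y=x$ forces $l_x(x)=u(x)$, and using the difference quotients of $u$ at $x$ along each coordinate direction identifies the linear part of $l_x$ with $Du(x)$. Thus $l_x(y)=u(x)+Du(x)\cdot(y-x)$. Given $x\ne y$ with $r=|x-y|$, I compare $l_x$ and $l_y$ on $B_r(x)\subset B_{2r}(y)$, where the triangle inequality gives $|l_x(z)-l_y(z)|\le Cr^{1+\alpha(x)}+C(2r)^{1+\alpha(y)}$ on this ball. Since $l_x-l_y$ is affine with linear part $Du(x)-Du(y)$, evaluating it at two antipodal points $x\pm \tfrac{r}{2}e_j$ and differencing converts the $L^\infty$ bound into the gradient estimate
\[
|Du(x)-Du(y)|\le C'\bigl(r^{\alpha(x)}+r^{\alpha(y)}\bigr).
\]

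The main obstacle is to absorb $r^{\alpha(y)}$ into $r^{\alpha(x)}$ uniformly in $x,y$. For $r<1$, I would write $r^{\alpha(y)-\alpha(x)}=\exp\bigl((\alpha(y)-\alpha(x))\ln r\bigr)$ and bound the exponent by $\gamma_\alpha(r)\,|\ln r|$. The hypothesis $\lim_{t\to 0^+}\gamma_\alpha(t)\ln(1/t)=0$ is precisely what is needed to guarantee that $r^{\alpha(y)-\alpha(x)}\le 2$ once $r<r_0$ for some fixed $r_0>0$, yielding $|Du(x)-Du(y)|\le 3C'|x-y|^{\alpha(x)}$ whenever $|x-y|<r_0$. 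For $|x-y|\ge r_0$, I would apply \eqref{eq_variable_approximation} at a fixed scale to bound $\|Du\|_{L^\infty(B_1)}$, after which the trivial estimate $|Du(x)-Du(y)|\le 2\|Du\|_\infty r_0^{-\alpha(x)}|x-y|^{\alpha(x)}$ closes the gap. Without the log-modulus assumption on $\alpha$, the factor $r^{\alpha(y)-\alpha(x)}$ could blow up as $r\to 0$ (think of $\alpha$ with Hölder modulus only), which is exactly what would prevent the characterization; this is the crux of the argument.
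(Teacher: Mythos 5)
Your forward implication is essentially the paper's (Taylor polynomial plus the mean value theorem / fundamental theorem of calculus; the paper merely phrases it as a contradiction argument), but your reverse implication takes a genuinely different and, in fact, more complete route. The paper subtracts the two expansions $u(x)=l_y(x)+O(r^{1+\alpha(y)})$ and $u(y)=l_x(y)+O(r^{1+\alpha(x)})$ and obtains only the scalar quantity $(Du(x)-Du(y))\cdot(x-y)$, i.e.\ the component of the gradient difference along the segment $xy$; passing from this projection to the full norm $|Du(x)-Du(y)|$ is not automatic and is left implicit there. Your argument avoids this issue entirely: by first justifying that $l_x$ must be the first-order Taylor polynomial (a step the paper also takes for granted), and then comparing $l_x$ and $l_y$ as affine functions on the overlapping balls $B_r(x)\subset B_{2r}(y)$ and sampling at antipodal points $x\pm\tfrac r2 e_j$, you recover every component of $Du(x)-Du(y)$ from the $L^\infty$ bound. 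The final absorption of $r^{\alpha(y)}$ into $r^{\alpha(x)}$ via $r^{\alpha(y)-\alpha(x)}=\exp\bigl((\alpha(y)-\alpha(x))\ln r\bigr)$ and the hypothesis $\gamma_\alpha(t)\ln(1/t)\to 0$ is exactly the paper's mechanism, and your treatment of the regime $|x-y|\ge r_0$ through a uniform bound on $\|Du\|_{L^\infty}$ is a sensible way to close the argument. In short: same skeleton, but your version of the "only if" direction fills a gap that the paper's proof glosses over, at the modest cost of the extra step identifying $l_x$ and the two-ball comparison.
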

\begin{proof}
We start by proving the first implication. Suppose 
\begin{align}\label{eq1}
    \sup_{x\neq y\in \Omega}\frac{|Du(x)-Du(y)|}{|x-y|^{\alpha(x)}}\leq C.
\end{align}
Let $x,y\in \Omega$, $x\neq y$ be arbitrary and take $r=|x-y|$. Then
$$ |Du(x)-Du(y)|\leq Cr^{1+\alpha(y)}. $$
Define $l_y(x)=u(y)+Du(y)\cdot (x-y)$. We want to prove \eqref{eq_variable_approximation}. We proceed by contradiction: suppose we can find a sequence of radii $r_k\to0$ such that
\begin{align}\label{eq2}
    \|u-l_y\|_{L^\infty(B_{r_k}(y))}r_k^{-(1+\alpha(y))}\geq k,
\end{align}
that is, for every $x\in B_{r_k}(y)$ it holds
$$ |u(x)-u(y)-Du(y)\cdot(x-y)|r_k^{-(1+\alpha(y))}\geq k $$
There exists $\eta$ in the line segment between $x$ and $y$ such that
$$u(x)-u(y)=Du(\eta)\cdot(x-y).$$
Thus
$$ |u(x)-u(y)-Du(y)\cdot(x-y)|\leq |Du(\eta)-Du(y)|r_k \leq Cr_k^{1+\alpha(y)},$$
which immediately produces a contradiction with \eqref{eq2}.

For the reverse implication, we start by writing
\begin{align*}
    &u(x)=u(y)+Du(y)\cdot(x-y)+O(r^{1+\alpha(y)})\\
    &u(y)=u(x)+Du(x)\cdot(y-x)+O(r^{1+\alpha(x)}),
\end{align*}
thus we can write
$$ (Du(x)-Du(y))\cdot(x-y)= O(r^{1+\alpha(x)})+O(r^{1+\alpha(y)}).$$
It suffices to prove $O(r^{1+\alpha(x)})=O(r^{1+\alpha(y)})$. For this purpose, we use the modulus of continuity $\gamma_\alpha$. Indeed, we have
$$ e^{\ln(r)\gamma_\alpha(r)}\leq r^{\alpha(x)-\alpha(y)}\leq e^{-\ln(r)\gamma_\alpha(r)}, $$
where $e^{\pm\ln(r)\gamma_\alpha(r)}=1+O(1)$. Thus
$$ O(r^{1+\alpha(x)})=O(r^{1+\alpha(y)})\left(1+O(1)\right)=O(r^{1+\alpha(y)}), $$
which completes the proof.
\end{proof}

\begin{Remark}\label{Rmk:mod_cont_exp}
    Note that assumption \ref{Assumption2} implies that, for every $0<r<1/e$, it holds
    \[
    \limsup_{k\to +\infty} k\gamma_{a}(r^kx)=0.
    \]
    Therefore, for every $\varepsilon_1>0$ there exists $\delta_1>0$ such that if $\rho<\delta_1$, then for every $k\in \Nn$,
    \[
     k\ln (1/\rho)\gamma_{a}(\rho^kx)\leq \varepsilon_1.
    \]
    Since $\rho$ will be chosen very small, we can assume that $\rho<1/e$ and thus
    \[
     k\gamma_{a}(\rho^kx)\leq \varepsilon_1.
    \]
    Now we fix 
    \[
    \varepsilon_1=\frac{\alpha_0-\alpha}{2}
    \]
    where $\alpha$ will be chosen later. This also fixes $\delta_1$, which now depends only on $\alpha$, universal constants and the data.
\end{Remark}

\section{Aleksandrov-Bakelman-Pucci estimate} \label{Section:ABP}

In this section we aim to prove the following ABP estimate.
\begin{Theorem}[ABP]\label{Thm:deg_ABP}
Let u satisfy
\begin{align*}
    \begin{cases}
    u\in \overline{S}_\omega(f^\pm), \quad &\mbox{ in } \Omega^\pm,\\
    u^+_\nu-u^-_\nu\leq g, &\mbox{ on } \Gamma,
\end{cases}
\end{align*}
with $f^\pm\in C(\Omega^\pm)\cap L^\infty(B_1)$, $g\in L^\infty(\Gamma)$ and $\Psi\in C^{1, \alpha_0}(\overline{B}_1')$. Then
\begin{align*}
    \sup_{B_1}u_-\leq \sup_{\partial B_1}u_-+C\left(\max_\Gamma g_+ +\|f^-_+\|_{L^d_\omega(\Omega^-)}+\|f^+_+\|_{L^d_\omega(\Omega^+)}\right).
\end{align*}

\end{Theorem}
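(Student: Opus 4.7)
The plan is to follow the classical Aleksandrov-Bakelman-Pucci framework, adapted to the degenerate transmission setting as in \cite{JS} and \cite{SS}. I would proceed in two stages: first, subtract off a piecewise-affine barrier to reduce to the case $g\le 0$; second, apply the concave envelope argument to the reduced problem, the key new observation being that when $g\le 0$ the interface contributes nothing to the subgradient image.

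For the reduction, in the flat case $\Psi\equiv 0$ I would take $\phi(x)=(\max_\Gamma g_+)(x_d)_+$; for general $\Psi$ I would use $\phi(x)=(\max_\Gamma g_+)(x_d-\Psi(x'))_+$, possibly preceded by a $C^{1,\alpha_0}$ flattening diffeomorphism whose effect on $\omega$ and on the operators is harmless. Since $\phi$ is affine on each side, $D^2\phi\equiv 0$ in $\Omega^\pm$, so $v:=u-\phi$ remains in $\overline{S}_\omega(f^\pm)$ while the transmission condition becomes $v^+_\nu - v^-_\nu \le g-\max_\Gamma g_+\le 0$ on $\Gamma$. Since $\|\phi\|_{L^\infty(B_1)}\le \max_\Gamma g_+$, this step costs only an additive $C\max_\Gamma g_+$ in the final estimate, so after relabelling the reduced transmission bound as $g\le 0$ it suffices to prove the ABP for $v$ without the $g$ term.

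With $g\le 0$ in place, I would apply the concave envelope argument to the subsolution $\tilde u:=-v\in\underline{S}_\omega(-f^\pm)$, which satisfies $\tilde u^+_\nu - \tilde u^-_\nu \ge -g \ge 0$. After normalising so $v\ge 0$ on $\partial B_1$, I would set $M:=\sup_{B_1}v_-$, extend $\tilde u_+$ by zero to $B_2$, and let $\Phi$ denote its upper concave envelope there; a standard covering lemma gives $cM^d\le|\nabla\Phi(\mathcal K)|$ where $\mathcal K:=\{\Phi=\tilde u_+\}$. For $x_0\in\mathcal K^\pm:=\mathcal K\cap\Omega^\pm$, $\Phi$ is an admissible $C^{1,1}$ test function touching $\tilde u$ from above, so the subsolution property yields $\omega(x_0)\mathcal M^+(D^2\Phi(x_0))\ge -f^\pm(x_0)$; since $-D^2\Phi\ge 0$ one gets $-\mathrm{tr}(D^2\Phi(x_0))\le f^\pm_+(x_0)/(\lambda\,\omega(x_0))$, and AM-GM produces $|\det D^2\Phi(x_0)|\le (f^\pm_+(x_0)/(d\lambda\,\omega(x_0)))^d$. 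Integration via the area formula then gives
\[
|\nabla\Phi(\mathcal K^\pm)|\;\le\; C\,\|f^\pm_+\|_{L^d_\omega(\Omega^\pm)}^d.
\]

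The main obstacle and crucial new observation is the vanishing of the interface contribution. At each $x_0\in\mathcal K_0:=\mathcal K\cap\Gamma$, the transmission condition for subsolutions touched from above gives $\Phi^+_\nu(x_0)-\Phi^-_\nu(x_0)\ge -g(x_0)\ge 0$, while concavity of $\Phi$ forces the opposite one-sided-derivative inequality $\Phi^+_\nu\le\Phi^-_\nu$. Together these pin down $\Phi^+_\nu(x_0)=\Phi^-_\nu(x_0)$, so $\Phi$ is differentiable in the normal direction at every interface contact point. Combined with the tangential differentiability $\mathcal H^{d-1}$-a.e.\ inherited from the restricted concave function $\Phi|_\Gamma$, this makes $\partial\Phi|_{\mathcal K_0}$ single-valued outside an $\mathcal H^{d-1}$-null subset, and the image of a Lipschitz map from a $(d-1)$-dimensional set in $\mathbb R^d$ has Lebesgue measure zero (the exceptional null subset is handled by a refined Alexandrov-type argument as in \cite{SS}, which is simpler here because the normal segment of $\partial\Phi$ has collapsed). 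Hence $|\nabla\Phi(\mathcal K_0)|=0$, and combining with the interior estimate yields $cM^d\le C(\|f^+_+\|_{L^d_\omega(\Omega^+)}^d+\|f^-_+\|_{L^d_\omega(\Omega^-)}^d)$. Taking $d$-th roots and adding back the $C\max_\Gamma g_+$ from the first reduction completes the proof.
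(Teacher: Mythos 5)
Your proposal attempts to redo the concave-envelope argument from scratch in the degenerate transmission setting, and the places where it is vague are exactly the places where it breaks. The most serious gap is the area-formula step: to go from $cM^d\le|\nabla\Phi(\mathcal K)|$ to $|\nabla\Phi(\mathcal K^\pm)|\le\int_{\mathcal K^\pm}\det(-D^2\Phi)\,dx$ one needs $\Phi\in C^{1,1}$ (for a general concave function the Monge--Amp\`ere measure satisfies $|\partial\Phi(A)|\ge\int_A\det(-D^2\Phi)\,dx$, i.e., the inequality goes the \emph{wrong} way), and in \cite{CC} the $C^{1,1}$ regularity of the envelope is obtained from touching paraboloids of \emph{uniformly bounded} opening. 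Here the opening at a contact point $x_0\in\mathcal K^\pm$ is $f^\pm_+(x_0)/(\lambda\,\omega(x_0))$, which blows up as $x_0\to\Gamma$; this unboundedness of $f/\omega$ is precisely the difficulty of the degenerate problem and your argument does not address it. Two further gaps: (i) for non-flat $\Psi$ the barrier $(\max_\Gamma g_+)(x_d-\Psi(x'))_+$ is only $C^{1,\alpha_0}$ on each side, so $D^2\phi$ does not vanish (indeed does not exist), and a $C^{1,\alpha_0}$ flattening is not ``harmless'' for a second-order operator, since transforming $D^2u$ requires two derivatives of the diffeomorphism; (ii) the concave envelope $\Phi$ is not an admissible test function for the transmission condition (it need not be $C^1$ up to $\Gamma$ from each side), and the passage from ``superdifferential single-valued off an $\mathcal H^{d-1}$-null set'' to $|\nabla\Phi(\mathcal K_0)|=0$ is the genuinely hard point --- a concave function can map an $\mathcal H^{d-1}$-null set onto a set of positive measure through a crease --- which you defer to \cite{SS}, whose refined contact-set analysis is stated only for flat interfaces.

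The paper's proof is entirely different and sidesteps all of this via the approximation scheme of \cite{CCKS}: take smooth bounded $\eta_j\to\omega^{-1}$ in $L^p$, use Lemma \ref{Lm:exist_CCKS} to produce correctors $\phi_j\in W^{2,p}$ with $\mathcal M^+(D^2\phi_j)\le f^\pm(\eta_j-\omega^{-1})$ in $\Omega^\pm$, zero transmission and boundary data, and $\|\phi_j\|_{L^\infty}\to0$. Then $v=u+\phi_j$ satisfies $\mathcal M^-(D^2v)\le f^\pm\eta_j$ with a bounded continuous source and the unchanged transmission inequality $v^+_\nu-v^-_\nu\le g$, so the uniformly elliptic transmission ABP of \cite{SS} (Theorem \ref{Thm:unifelip_ABP}) applies directly; letting $j\to\infty$ gives the weighted norms. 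In short, the degeneracy is absorbed into a vanishing corrector and the known non-degenerate ABP is used as a black box, rather than reproving the envelope estimate; if you wish to salvage your route you would essentially have to reprove the ABP of \cite{SS} in the degenerate setting, which is far harder than the reduction above.
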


We begin by stating the following existence result, which follows immediately from \cite[Lemma 3.1]{CCKS}. 

\begin{Lemma}\label{Lm:exist_CCKS}
Let $h^\pm\in L^p(\Omega^\pm)$ with $p>d$. There exists an $L^p$-strong solution $u\in W^{2,p}(B_1)$ of
\begin{align*}
    \begin{cases}
        \mathcal{M}^+(D^2u)\leq h^\pm,\quad &\mbox{ in } \Omega^\pm,\\
        u^+_\nu-u^-_\nu=0,&\mbox{ on } \Gamma,\\
        u=0, &\mbox{ on } \partial B_1.
    \end{cases}
\end{align*}
Moreover, $u$ satisfies
\begin{align*}
    \|u\|_{W^{2,p}(B_{1/2})}\leq C\left(\|h^-\|_{L^p(B_1)}+\|h^+\|_{L^p(B_1)}\right)
\end{align*}
and
\begin{align*}
    \|u\|_{L^\infty(B_1)}\leq C\left(\|h^-\|_{L^p(B_1)}+\|h^+\|_{L^p(B_1)}\right),
\end{align*}
where $C=C(p,d,\lambda,\Lambda)$.
\end{Lemma}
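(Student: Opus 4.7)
The plan is to reduce this to the classical CCKS existence lemma by observing that for exponents $p>d$, the transmission condition is automatic in the $W^{2,p}$ setting. Specifically, if $u\in W^{2,p}(B_1)$ with $p>d$, then Sobolev embedding gives $u\in C^1(\overline{B_1})$, so $Du$ is continuous across $\Gamma$; in particular $u^+_\nu=u^-_\nu$ on $\Gamma$ as a pointwise identity, and hence $u^+_\nu-u^-_\nu=0$ holds a priori. This reformulates the transmission problem as a single global problem on $B_1$ for which the interface $\Gamma$ is invisible, except that the right-hand side is merely piecewise defined.

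Accordingly, I would define
\[
h(x)=\begin{cases} h^+(x), & x\in\Omega^+,\\ h^-(x), & x\in\Omega^-,\end{cases}
\]
and note that $h\in L^p(B_1)$ with $\|h\|_{L^p(B_1)}\leq \|h^+\|_{L^p(\Omega^+)}+\|h^-\|_{L^p(\Omega^-)}$ (the set $\Gamma$ has zero Lebesgue measure and $\Psi\in C^{1,\alpha_0}$ ensures that $\Omega^\pm$ are standard Lipschitz subdomains of $B_1$). I would then invoke directly \cite[Lemma 3.1]{CCKS}, which provides the existence of an $L^p$-strong solution $u\in W^{2,p}(B_1)\cap C(\overline{B_1})$ to
\[
\mathcal{M}^+(D^2u)\leq h \text{ a.e. in } B_1,\qquad u=0 \text{ on } \partial B_1,
\]
together with the interior $W^{2,p}$ estimate and the global $L^\infty$ estimate (the latter being the strong-solution ABP inequality applied to $u$, using $p>d$). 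Restricting the differential inequality to $\Omega^\pm$ yields $\mathcal{M}^+(D^2u)\leq h^\pm$ a.e. in $\Omega^\pm$, while the zero-jump boundary condition holds pointwise on $\Gamma$ by the $C^1$-embedding as noted above. The estimates transfer verbatim since $\|h\|_{L^p}\leq \|h^+\|_{L^p(\Omega^+)}+\|h^-\|_{L^p(\Omega^-)}$.

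There is essentially no obstacle: the content of the lemma is precisely that invoking CCKS is legitimate, and the only thing one must check is that the transmission condition is not an additional constraint but a consequence of the Sobolev regularity of the solution produced. One minor point worth mentioning is that the cutoff exponent $p>d$ in the statement is both what CCKS requires to obtain an $L^p$-strong solution and what guarantees the $C^1$ embedding; both roles are served by the single hypothesis $p>d$, which is why the statement can be asserted to follow immediately from \cite[Lemma 3.1]{CCKS}.
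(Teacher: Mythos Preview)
Your proposal is correct and follows essentially the same approach as the paper: define $h=h^+\chi_{\Omega^+}+h^-\chi_{\Omega^-}\in L^p(B_1)$, apply \cite[Lemma 3.1]{CCKS} to obtain the $W^{2,p}$ strong solution with the stated estimates, and then observe that $p>d$ gives the Sobolev embedding $W^{2,p}(B_1)\hookrightarrow C^{1,1-d/p}(\overline{B_1})$, so the transmission condition $u^+_\nu-u^-_\nu=0$ holds classically.
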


\begin{proof}
Let $h=h^+\chi_{\Omega^+}+h^-\chi_{\Omega^-}\in L^p$. By \cite[Lemma 3.1]{CCKS}, there exists an $L^p$-strong solution $u\in W^{2,p}(B_1)$ of
\begin{align*}
    \begin{cases}
        \mathcal{M}^+(D^2u)\leq h,\quad &\mbox{ in } B_1,\\
        u=0, &\mbox{ on } \partial B_1,
    \end{cases}
\end{align*}
which satisfies the desired estimates. Since $p>d$, by the Sobolev embedding, $W^{2,p}(B_1)\subset C^{1,1-d/p}(\overline{B_1})$  and thus the transmission condition is satisfied in the classical way.
\end{proof}

We will also use the ABP obtained in \cite{SS} for the uniformly elliptic case $\omega\equiv 1$.
\begin{Theorem}\label{Thm:unifelip_ABP}
    Let u satisfy
\begin{align*}
    \begin{cases}
    u\in \overline{S}_1(f^\pm), \quad &\mbox{ in } \Omega^\pm,\\
    u^+_\nu-u^-_\nu\leq g, &\mbox{ on } \Gamma,
\end{cases}
\end{align*}
with $f^\pm\in C(\Omega^\pm)\cap L^\infty(B_1)$, $g\in L^\infty(\Gamma)$ and $\Psi\in C^{1, \alpha_0}(\overline{B}_1')$. Then
\begin{align*}
    \sup_{B_1}u_-\leq \sup_{\partial B_1}u_-+C\left(\max_\Gamma g_+ +\|f^-_+\|_{L^d(\Omega^-)}+\|f^+_+\|_{L^d(\Omega^+)}\right).
\end{align*}
\end{Theorem}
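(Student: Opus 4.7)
The plan is to follow \cite{SS}, combining an auxiliary-function reduction with the classical convex-envelope method in two steps: first reduce to $g \equiv 0$, then prove the ABP with vanishing transmission jump by adapting the convex-envelope argument to the interface.

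For the reduction, after flattening $\Gamma$ via a $C^{1,\alpha_0}$ change of variables (which only perturbs the Pucci ellipticity constants in a universal way), I may assume $\Gamma = \{x_d = 0\} \cap B_1$. Let $\zeta(x) := \max_\Gamma g_+ \cdot (x_d)_+ \, \eta(x)$ with $\eta$ a smooth cutoff compactly supported in $B_1$ and equal to $1$ near $\Gamma$. Then $\zeta$ vanishes in $\Omega^-$, is smooth in $\Omega^+$, satisfies $|D^2\zeta| \leq C\max_\Gamma g_+$ a.e., $\zeta^+_\nu - \zeta^-_\nu = \max_\Gamma g_+ \geq g$ on $\Gamma$, and $\|\zeta\|_{L^\infty}\leq C\max_\Gamma g_+$. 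By Pucci calculus, $w := u-\zeta$ lies in $\overline{S}_1(f^\pm + C\max_\Gamma g_+)$ in each $\Omega^\pm$ with $w^+_\nu - w^-_\nu \leq 0$ on $\Gamma$, so it suffices to prove the ABP for $w$.

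The second step is to prove the ABP for $w \in \overline{S}_1(h^\pm)$ with $w^+_\nu - w^-_\nu \leq 0$ on $\Gamma$ by adapting the proof of \cite[Chapter 3]{CC}. Let $\Phi$ denote the convex envelope of $-w_-\chi_{B_1}$ (extended by zero outside $B_1$) in $B_2$, and let $K:=\{\Phi = -w_-\}\cap B_1$ be the contact set. On $K\cap(\Omega^+\cup\Omega^-)$ the Pucci supersolution property and Alexandrov's theorem yield the standard Hessian bound $|D^2\Phi|\leq C h^\pm_+/\lambda$ a.e., and the area-formula estimate $|\nabla\Phi(B_1)|\leq\int_K|\det D^2\Phi|\,dx$ delivers the ABP, provided one can rule out singular contributions from the Lebesgue-null interface $\Gamma$. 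The main obstacle is precisely this interface contribution: the subdifferential of $\Phi$ at $x_0\in K\cap\Gamma$ could a priori be a nontrivial convex set whose image has positive Lebesgue measure. The transmission condition rules this out via a piecewise-affine test function: if $\ell_p,\ell_q$ are two distinct supporting planes of $\Phi$ at $x_0$ with $p_\nu<q_\nu$, then both satisfy $\ell_p,\ell_q\leq\Phi\leq w$ near $x_0$, and the piecewise-affine function $\varphi := \ell_q\chi_{\Omega^+} + \ell_p\chi_{\Omega^-}$ is continuous across $\Gamma$, meets the regularity required by the viscosity transmission condition, and touches $w$ from below at $x_0$ with normal-derivative jump $\varphi^+_\nu-\varphi^-_\nu = q_\nu-p_\nu>0$, contradicting $w^+_\nu-w^-_\nu\leq 0$. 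Hence the normal component of the subdifferential is single-valued at every interface contact point; standard arguments (as in \cite{SS}) then show that the image of $K\cap\Gamma$ under the subgradient map has Lebesgue measure zero, so the classical ABP bound survives. Unwinding the reduction completes the proof.
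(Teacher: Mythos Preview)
The paper does not prove this statement; it is quoted directly from \cite{SS} and used as a black box in the proof of the degenerate ABP (Theorem~\ref{Thm:deg_ABP}). Your proposal is therefore a reconstruction of the \cite{SS} argument, and the two–step outline—absorb $g$ into an auxiliary function, then run the convex-envelope ABP and show the interface carries no Monge--Amp\`ere mass—is the correct strategy.

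There is, however, a concrete error in the key step. Your test function $\varphi := \ell_q\chi_{\Omega^+} + \ell_p\chi_{\Omega^-}$ is \emph{not} continuous across $\Gamma$: two distinct supporting planes of $\Phi$ at $x_0$ agree only at the single point $x_0$, so on $\Gamma\setminus\{x_0\}$ (in flat coordinates $\{x_d=0\}$) they differ by $(q'-p')\cdot(x'-x_0')$, which vanishes only if the tangential parts $p',q'$ coincide. Since the viscosity transmission condition is tested against functions that are continuous across $\Gamma$ (cf.\ the equivalent form (ii$'$) in Section~\ref{Section:flat}, where test functions are $P(x')+p^+x_d^+-p^-x_d^-$), your $\varphi$ is not admissible and the contradiction does not follow. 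Repairing this is not as simple as adding a tangential quadratic $-M|x'-x_0'|^2$: one can check that no single polynomial $P(x')$ simultaneously forces $P(x')+q_d\,x_d\leq\Phi$ in $\Omega^+$ and $P(x')+p_d\,x_d\leq\Phi$ in $\Omega^-$ in a full neighbourhood of $x_0$ when $p'\neq q'$. The actual argument in \cite{SS} handles the interface contribution more carefully, and you should follow it rather than the shortcut you sketch.

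A second point: the flattening is not as innocuous as you assert. A $C^{1,\alpha_0}$ change of variables does not preserve the class $\overline{S}_1$ for $C$-viscosity solutions, since composing a $C^2$ test function with a $C^{1,\alpha_0}$ map yields only a $C^{1,\alpha_0}$ function. Relatedly, in the original curved coordinates the natural auxiliary function $\max_\Gamma g_+\cdot(x_d-\Psi(x'))_+$ does not have bounded Hessian when $\Psi$ is merely $C^{1,\alpha_0}$, so the claim $|D^2\zeta|\leq C\max_\Gamma g_+$ requires justification.
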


We are now ready to prove the ABP in the degenerate setting.

\begin{proof}[Proof of Theorem \ref{Thm:deg_ABP}]
    We follow the reasoning in \cite{CCKS}. 

Let $\eta_j\in C^\infty(B_1)$, $0\leq \eta_j\leq j$, $\eta_j\to \omega^{-1}$ in $L^p$.  By Lemma \ref{Lm:exist_CCKS}, there exist functions $\phi_j\in W^{2,p}(B_1)\cap C(\overline{B_1})$ which solve
\begin{align*}
    \begin{cases}
        \mathcal{M}^+(D^2\phi_j)\leq f^\pm(\eta_j-\omega^{-1}),\quad &\mbox{ in } \Omega^\pm,\\
        (\phi_j)_\nu^+-(\phi_j)_\nu^-=0, &\mbox{ on } \Gamma,\\
        \phi_j=0,& \mbox{ on } \partial B_1,
    \end{cases}
\end{align*}
and satisfy
\begin{align*}
    \|\phi_j\|_{L^\infty(B_1)}\leq C\|f(\eta_j-\omega^{-1})\|_{L^p(B_1)}\to0, \mbox{ as } j\to \infty.
\end{align*}
Note that
\begin{align*}
    \mathcal{M}^-(D^2u+D^2\phi_j)\leq \mathcal{M}^-(D^2u)+\mathcal{M}^+(D^2\phi_j).
\end{align*}
Set $v=u+\phi_j$ to get
\begin{align*}
    \mathcal{M}^-(D^2v)\leq f^\pm\eta_j,\quad \mbox{ in } \Omega^\pm.
\end{align*}
Furthermore,
\begin{align*}
    v^+_\nu-v^-_\nu\leq g, \mbox{ on } \Gamma.
\end{align*}

Since the source term is continuous in $\Omega^\pm$, we can apply Theorem \ref{Thm:unifelip_ABP} to $v$ and get
\begin{align*}
    \sup_{B_1}v_-\leq \sup_{\partial B_1}v_-+C\left(\max_\Gamma g_++\|f^-_+\eta_j\|_{L^d(\Omega^-)}+\|f^+_+\eta_j\|_{L^d(\Omega^+)}\right).
\end{align*}
Letting $j\to \infty$, we get the desired result.
    
\end{proof}

\begin{Remark}
    As discussed in \cite[Remark 2.4]{SS} for uniformly elliptic equations, in the case of flat interface $\Gamma=\{x_d=0\}$, we retain more information about the contact sets $\{v=\mathcal{C}_v\}$ and thus can write an improved ABP in the form
    \[
    \sup_{B_1}u_-\leq \sup_{\partial B_1}u_{-}+C\big(\max_\Gamma g_++\|f_+^-\|_{L^d_\omega(\Omega^-\cap\{u=\mathcal{C}_u\})}+\|f_+^+\|_{L^d_\omega(\Omega^+\cap\{u=\mathcal{C}_u\})}\big).
    \]
\end{Remark}
\hfill
\section{Harnack inequality and H\"older regularity} \label{Section:Harnack}

\begin{Theorem}[Harnack Inequality]\label{Thm:Harnack}
    Let $u$ satisfy
    \begin{align*}
        \begin{cases}
            u\in S_\omega^*(f^\pm),\quad &\mbox{ in } \Omega^\pm,\\
            u_\nu^+-u_\nu^-=g,&\mbox{ on } \Gamma,
        \end{cases}
    \end{align*}
    with $f^\pm\in C(\Omega^\pm)\cap L^\infty(B_1)$, $g\in L^\infty(\Gamma)$ and $\Psi\in C^{1,\alpha_0}(\overline{B}_1')$. Assume further that $\|u\|_{L^\infty(B_1)}\leq 1$ , $u(\overline{x})\geq 0$, $\overline{x}=\frac{1}{5}e_n$, $B_{1/20}(\overline{x})\subset \Omega^+$. There exists $0<\varepsilon_0,c<1$ depending on $d, \lambda, \Lambda, [\Psi]_{C^{1,\alpha_0}}$ such that, if $\|g\|_{L^\infty(\Gamma)}+\|f^+\|_{L^d_\omega(\Omega^+)}+\|f^-\|_{L^d_\omega(\Omega^-)}\leq \varepsilon_0$, then
    \begin{align*}
        \inf_{B_{1/3}}u\geq -1+c.
    \end{align*}
\end{Theorem}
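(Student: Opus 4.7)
The plan is the classical reduction-to-positivity-and-barrier argument, adapted to the degenerate transmission setting. First, via the translation $v:=u+1$, we reduce to showing: if $v\ge 0$ in $B_1$, $v(\bar x)\ge 1$, $\|v\|_{L^\infty(B_1)}\le 2$, $v\in S_\omega^*(f^\pm)$ with transmission datum $g$, then $\inf_{B_{1/3}} v\ge c$ for a universal $c>0$.

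Since $B_{1/20}(\bar x)\subset\Omega^+$ and $\bar x=\tfrac{1}{5}e_d$, the weight $\omega$ is bounded above and below by positive universal constants on $B_{1/20}(\bar x)$. Hence $v$ satisfies the usual uniformly elliptic Pucci inequalities on this ball with right-hand side of $L^d$ norm controlled by $C\varepsilon_0$, so the classical interior Harnack inequality of Caffarelli-Cabr\'e \cite[Theorem 4.3]{CC} applied to $v$ yields a universal $c_1>0$ with $v\ge c_1$ on $B_{1/40}(\bar x)$, provided $\varepsilon_0$ is small enough.

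Next, with universal parameters $R\in(0,1)$, $\tau>0$, $c\in(0,c_1]$ and $p$ large (depending on $d,\lambda,\Lambda$), define the Caffarelli-Cabr\'e-type barrier
\[
\psi(x):=\min\bigl\{\tau\bigl(|x-\bar x|^{-p}-R^{-p}\bigr),\,c_1\bigr\},
\]
chosen so that $\psi\le 0$ on $\partial B_1$, $\psi\ge c$ on $\overline{B_{1/3}}$, and $\mathcal M^-(D^2\psi)\ge 0$ in the viscosity sense on $B_1$. Since $\bar x\notin\Gamma$ and $\psi$ depends only on $|x-\bar x|$, it is smooth across $\Gamma$ and $\psi_\nu^+-\psi_\nu^-\equiv 0$ there. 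Setting $\tilde w:=v-\psi$, the Pucci subadditivity $\mathcal M^-(A+B)\le \mathcal M^-(A)+\mathcal M^+(B)$, together with the $S_\omega^*$-membership of $v$, gives $\tilde w\in\overline S_\omega(|f^\pm|)$; the transmission jump of $\tilde w$ across $\Gamma$ equals $g$, while $\tilde w\ge 0$ on $\partial B_1$ by the sign choices for $v$ and $\psi$. Theorem \ref{Thm:deg_ABP} applied to $\tilde w$ therefore produces
\[
\sup_{B_1}\tilde w_-\le C\bigl(\max_\Gamma g_+ + \|f^+\|_{L^d_\omega(\Omega^+)} + \|f^-\|_{L^d_\omega(\Omega^-)}\bigr)\le 3C\varepsilon_0,
\]
and a final choice $\varepsilon_0\le c/(6C)$ yields $v\ge\psi-c/2\ge c/2$ on $B_{1/3}$.

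The potentially delicate step is the interaction of the barrier with the degenerate weight and the curved interface. Here it is largely bypassed: because $\bar x$ lies at distance at least $1/20$ from $\Gamma$, the classical power-type barrier is smooth across $\Gamma$ and matches the transmission condition with a zero jump, and the weighted inequality $\omega\mathcal M^-(D^2\psi)\ge 0$ reduces to the unweighted one off $\Gamma$. All the work pertaining to the degeneracy on $\Gamma$ and the curvature of $\Psi$ has already been performed inside Theorem \ref{Thm:deg_ABP}, which is invoked as a black box; the remaining content of the argument is the ordering of the universal smallness constants so that $c_1$, $c$ and $\varepsilon_0$ can all be fixed in a consistent way.
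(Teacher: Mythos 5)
Your overall architecture (a Harnack step near $\bar x$, then a barrier combined with the ABP of Theorem \ref{Thm:deg_ABP}) is the same as the paper's. One genuine difference is welcome: you let the $\max_\Gamma g_+$ term already present in Theorem \ref{Thm:deg_ABP} absorb the transmission datum, whereas the paper corrects its barrier by the term $\tfrac{\varepsilon_0}{c_0}w$, with $w_\nu\ge c_0$ taken from \cite{SS}, precisely so that the barrier's normal jump dominates $g$ and the ABP can be invoked with $g=0$. Since the ABP as stated tolerates a nonzero jump at the price of $C\max_\Gamma g_+\le C\varepsilon_0$, your treatment of $g$ is a legitimate simplification that removes the auxiliary function $w$ entirely.

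Two steps, however, fail as written. First, the assertion that $\omega$ is bounded below by a universal constant on $B_{1/20}(\bar x)$ is unjustified: the hypothesis $B_{1/20}(\bar x)\subset\Omega^+$ only says the open ball misses $\Gamma$, so points near $\partial B_{1/20}(\bar x)$ may be arbitrarily close to the interface and $\omega$ may degenerate there; this is exactly why the paper applies the degenerate interior Harnack of \cite{JS} on that ball. (You can instead use the classical Harnack on $B_{1/40}(\bar x)$, where $\dist(\cdot,\Gamma)\ge 1/40$ and the weight really is uniformly nondegenerate.) Second, and more seriously, the truncated barrier $\psi=\min\bigl\{\tau(|x-\bar x|^{-p}-R^{-p}),c_1\bigr\}$ does not satisfy $\mathcal{M}^-(D^2\psi)\ge 0$ in the viscosity sense on all of $B_1$: this is a subsolution-type inequality, tested by functions touching from above, and the minimum of two subsolutions is not a subsolution. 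At the truncation sphere $\psi$ has a concave kink, and it can be touched from above there by paraboloids with arbitrarily negative Hessian, so admissible test functions violate $\mathcal{M}^-(D^2\varphi)\ge 0$. Hence $\tilde w=v-\psi\in\overline{S}_\omega(|f^\pm|)$ fails on that sphere and Theorem \ref{Thm:deg_ABP} cannot be applied on $B_1$ as a black box (the fact that $\{\tilde w<0\}$ avoids the truncation region would only help if you reopened the proof of the ABP and tracked its contact set). The standard repair is the paper's: drop the truncation and run the comparison on the annulus between the inner sphere, where the Harnack step supplies the boundary bound $v\ge c_1$, and an outer sphere where the untruncated power barrier is nonpositive; there the barrier is smooth and your computation goes through.
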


\begin{proof}
    Since $u+1\geq 0$, by the interior Harnack inequality obtained in \cite{JS} applied  in $B_{1/20}(\overline{x})$,
    \begin{align*}
        \sup_{B_{1/20}(\overline{x})}(u+1)\leq C\left(\inf_{B_{1/20}(\overline{x})}(u+1)+\|f^+\|_{L^d_\omega(\Omega^+)}\right),
    \end{align*}
where $C=C(d,\lambda,\Lambda,\omega)$. Then 
\begin{align*}
    1\leq u(\overline{x})+1\leq \sup_{B_{1/20}(\overline{x})}(u+1)\leq C\left(u(x)+1+\varepsilon_0\right),
\end{align*}
for all $x\in B_{1/20}(\overline{x})$ and thus
\begin{align}\label{eq:hlb}
    u\geq -1+\Tilde{c}, \quad\mbox{ in } B_{1/20}(\overline{x})
\end{align}
with $\Tilde{c}=1/C-\varepsilon_0<1$, $\varepsilon_0<1/C$. For $x\in D:=B_{3/4}(\overline{x})\setminus B_{1/20}(\overline{x})$, define
\begin{align*}
    &v(x)=\eta\phi(r)+\frac{\varepsilon_0}{c_0}w(x),\\
    &\phi(r)=r^{-\gamma}-\left(\frac{2}{3}\right)^{-\gamma}, \quad r=|x-\overline{x}|
\end{align*}
and $w$ is the unique viscosity solution to
\begin{align*}
    \begin{cases}
        \mathcal{M}^-(D^2w)=0, \quad & \mbox{ in } B_2\setminus \Tilde{\Gamma},\\
    w=\phi, &\mbox{ on } \Tilde{\Gamma},\\
    w=1, & \mbox{ on } \partial B_2
    \end{cases}
\end{align*}
where $\Tilde{\Gamma}=\{x_d=\Psi(x'), \, x'\in B_2'\}$. Furthermore, we have $w_\nu\geq c_0$ for $c_0>0$ depending only on $d, \lambda, \Lambda, \alpha_0$,  $[\Psi]_{C^{1,\alpha_0}}$, see \cite[Lemma 2.3]{SS}.

For any $x\in D^\pm=\Omega^\pm\cap D$, we proceed as in \cite{SS} to get
\begin{align*}
    \omega(x)\mathcal{M}^-(D^2v^\pm(x))\geq \omega(x)\eta\gamma r^{-\gamma-2}\left(\lambda(\gamma+1)-\Lambda(d-1)\right)\geq 0
\end{align*}
    for $x\in \Gamma\cap D$, it follows that
\begin{align*}
    v_\nu^+(x)-v_\nu^-(x)=\frac{\varepsilon_0}{c_0}\left(w_\nu^+(x)-w_\nu^+(x)\right)\geq 2\varepsilon_0>\|g\|_{L^\infty(\Gamma)}\geq g(x)
\end{align*}
since $\phi(r)$ is smooth outside $B_{1/20}(\overline{x})$.

Take $\eta,\varepsilon_0$ such that $v\leq \Tilde{c}$ on $\partial B_{1/20}(\overline{x})$ and $v\leq 0$ on $\partial B_{3/4}(\overline{x})$.

Note that $\phi(r)\geq 0$ in $0<r\leq 2/3$ and $\phi(r)\leq 0$ if $r\leq 2/3$. First fix $\eta$ such that $\eta\leq \Tilde{c}/(2\phi(1/20))$, then $\varepsilon_0$ such that $\varepsilon_0\leq c_0^{-1}\min\left\{\Tilde{c}/2,-\eta\phi(3/4)\right\}$. By \eqref{eq:hlb} we obtain $v\leq u+1$ on $\partial D$.

Since $u+1\in \overline{S}_\omega(|f^\pm|)$ in $D^\pm$, $v^\pm\in C^2(D^\pm)$ and $\omega(x)\mathcal{M}^-(D^2v^\pm)\geq 0$ in $D^\pm$, we get $u+1-v\in \overline{S}_\omega(|f^\pm|)$ in $D^\pm$. Also
\begin{align*}
    (u+1-v)_\nu^+-(u+1-v)_\nu^-\leq 0 \mbox{ on } \Gamma\cap D
\end{align*}
in the viscosity sense. Hence, applying Theorem \ref{Thm:deg_ABP} to $u+1-v$ in $D$ with $g=0$ we get
\begin{align*}
    \sup_D\,(u+1-v)_-\leq \sup_{\partial D}\,(u+1-v)_-+C\left(\|f^-\|_{L^d_\omega(\Omega^-)}+\|f^+\|_{L^d_\omega(\Omega^+)}\right) \leq C\varepsilon_0.
\end{align*}
Therefore $u\geq -1+v-C\varepsilon_0$ in $D$.

Moreover, for any $x\in B_{1/3}(0)\setminus B_{1/20}(\overline{x})$, we have $v(x)\geq \eta\phi(23/60)=c_1>0$, which depends only on $d, \lambda, \Lambda$. Choosing $\varepsilon_0$ such that $\varepsilon_0\leq c_1/(2C)$ we get $u\geq -1+c_1/2$ in $B_{1/3}(0)\setminus B_{1/20}(\overline{x})$. Therefore, by choosing $c=\min\{\Tilde{c}, c_1/2\}$ we get
\begin{align*}
    \inf_{B_{1/3}}\, u\geq -1+c.
\end{align*}
    
\end{proof}

Interior H\"older regularity follows exactly as in \cite{SS}.

\begin{Theorem}[Interior H\"older regularity]\label{Thm:int_Holder}
   Let $u$ satisfy
    \begin{align*}
        \begin{cases}
            u\in S_\omega^*(f^\pm),\quad &\mbox{ in } \Omega^\pm,\\
            u_\nu^+-u_\nu^-=g,&\mbox{ on } \Gamma,
        \end{cases}
    \end{align*}
    with $f^\pm\in C(\Omega^\pm)\cap L^\infty(B_1)$, $g\in L^\infty(\Gamma)$ and $\Psi\in C^{1, \alpha_0}(\overline{B}_1')$. Then $u\in C^{\beta_0}(\overline{B}_{1/2})$ and
    \begin{align*}
        \|u\|_{C^{\beta_0}(\overline{B}_{1/2})}\leq C\left(\|u\|_{L^\infty(B_1)}+\|g\|_{L^\infty(\Gamma)}+\|f^+\|_{L^d_\omega(\Omega^+)}+\|f^-\|_{L^d_\omega(\Omega^-)}\right)
    \end{align*}
    where $\beta_0$ and $C$ depend only on $d, \lambda, \Lambda, \alpha_0, [\Psi]_{C^{1,\alpha_0}}$  and $\omega$.
\end{Theorem}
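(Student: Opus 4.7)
The plan is to follow the classical oscillation decay strategy of Krylov–Safonov, using the Harnack inequality of Theorem \ref{Thm:Harnack} as the fundamental building block, and combining it with the interior Hölder regularity from \cite{JS} at points away from $\Gamma$. The only subtlety is the correct rescaling that makes the degenerate weight $\omega$ and the transmission datum $g$ behave well under zoom-in.

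\textbf{Step 1: Normalization.} By linearity of the class $S^*_\omega$, upon dividing $u$ by
\[
K := \|u\|_{L^\infty(B_1)} + \varepsilon_0^{-1}\bigl(\|g\|_{L^\infty(\Gamma)} + \|f^+\|_{L^d_\omega(\Omega^+)} + \|f^-\|_{L^d_\omega(\Omega^-)}\bigr),
\]
I can assume $\|u\|_{L^\infty(B_1)}\leq 1$ and the smallness condition required by Theorem \ref{Thm:Harnack} holds. It suffices then to establish the pointwise decay
\[
\osc_{B_{r_0^k}(x_0)} u \leq C\mu^k \qquad \forall\, x_0\in\overline{B_{1/2}},\ k\in\mathbb N,
\]
for some universal $r_0\in(0,1)$ and $\mu\in(0,1)$, which by a standard argument yields $u\in C^{\beta_0}(\overline{B_{1/2}})$ with $\beta_0=-\log\mu/\log r_0$.

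\textbf{Step 2: Oscillation decay at an interface point.} Fix $x_0\in\overline{B_{1/2}}$ close to $\Gamma$ and rescale: set
\[
v_k(y)=\frac{u(x_0+r_0^k y)-m_k}{\osc_{B_{r_0^k}(x_0)}u},\qquad m_k:=\tfrac12\bigl(\sup_{B_{r_0^k}(x_0)}u+\inf_{B_{r_0^k}(x_0)}u\bigr).
\]
Then $\|v_k\|_{L^\infty(B_1)}\leq \tfrac12$, and $v_k$ solves a transmission problem of the same form, with an interface given by a translated/dilated $C^{1,\alpha_0}$ graph $\Psi_k$ (whose $C^{1,\alpha_0}$ seminorm stays bounded by $[\Psi]_{C^{1,\alpha_0}}$), with operators $F_k^\pm$ sharing the same ellipticity constants, and with new data
\[
f_k^\pm(y)=\frac{r_0^{k(2-a(x_0+r_0^k y))}}{\osc_{B_{r_0^k}(x_0)}u}f^\pm(x_0+r_0^k y),\qquad g_k(y)=\frac{r_0^k}{\osc_{B_{r_0^k}(x_0)}u}g(x_0+r_0^k y).
\]
Since $p=d$ here we only need the $L^d_{\omega_k}$ norm of $f_k^\pm$, and a direct change of variables using $\omega(x)=|x_d-\Psi(x')|^{a(x)}$ shows this norm is controlled by $\|f^\pm\|_{L^d_\omega(\Omega^\pm)}$ up to a universal factor, provided we absorb $\osc_{B_{r_0^k}(x_0)} u$ by the iteration hypothesis (if the oscillation has not yet decayed, we are in the base case and the claim is trivial). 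The same estimate applies to $g_k$.

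\textbf{Step 3: Applying Harnack.} Either $-v_k+\tfrac12\geq 0$ or $v_k+\tfrac12\geq 0$ takes a non-negative value at one of the two candidate centers $\pm\tfrac15 e_d$, at least one of which lies in $\Omega_k^\pm$ at distance $\geq 1/20$ from $\Gamma_k$ (adjusting $r_0$ so that $\Gamma_k$ is close to a hyperplane passing near $0$ by $C^{1,\alpha_0}$ regularity of $\Psi$, and possibly reflecting). Theorem \ref{Thm:Harnack} (and its symmetric counterpart for $\Omega^-$, obtained by reflecting the problem) then yields
\[
\inf_{B_{1/3}} v_k \geq -\tfrac12+c\qquad\text{or}\qquad \sup_{B_{1/3}} v_k \leq \tfrac12-c,
\]
which translates back to
\[
\osc_{B_{r_0^{k+1}}(x_0)}u \leq \mu\,\osc_{B_{r_0^k}(x_0)}u,\qquad \mu:=1-c,
\]
once $r_0$ is fixed so that $r_0<1/3$.

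\textbf{Step 4: Points away from $\Gamma$.} For $x_0$ with $\dist(x_0,\Gamma)\geq \rho$, interior $C^{\beta}$ regularity for the degenerate equation established in \cite{JS} applies on $B_{\rho/2}(x_0)$ and gives the desired estimate with constants depending on $\rho$. Combining this with Step 3 at the boundary scale $\rho\sim r_0^k$ (a standard telescoping argument) yields the full estimate uniformly on $\overline{B_{1/2}}$.

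\textbf{Main obstacle.} The delicate point is bookkeeping in Step 2: ensuring that the rescaled data $f_k^\pm, g_k$ remain small under the iteration, which forces the correct choice of rescaling exponent that accounts simultaneously for the variable degeneracy $a(x)$ and the unit weight of $g$ on $\Gamma$. This is the mechanism through which the summability threshold $p=d$ in the weighted space $L^d_\omega$ and the geometric decay of $\osc$ are compatible, and it is precisely what makes the approach of \cite{SS} transfer to the degenerate setting with minimal modification.
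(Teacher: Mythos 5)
Your proposal is correct and follows essentially the same route as the paper, which simply states that interior H\"older regularity ``follows exactly as in \cite{SS}'', i.e.\ the standard Krylov--Safonov oscillation-decay argument built on the Harnack inequality of Theorem \ref{Thm:Harnack}, with the rescaling bookkeeping for $\omega$, $f^\pm$ and $g$ that you carry out in Step 2 and the usual dichotomy (either the rescaled data are small and Harnack applies, or the oscillation is already dominated by the data at that scale and the decay is automatic). The only minor imprecision is the phrasing of that dichotomy as ``the base case is trivial''; the correct statement is that if $\osc_{B_{r_0^k}(x_0)}u$ falls below $\varepsilon_0^{-1}$ times the rescaled data norms, then it is already bounded by $Cr_0^{k}$ times the data, which gives the H\"older decay directly at that scale.
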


We also obtain global H\"older continuity in the usual way, see for example \cite{CC}.
\begin{Proposition}[Global H\"older regularity]\label{Pro:Holder_global}
 Assume that $u\in C(B_1)$ is a viscosity solution to
    \begin{align*}
        \begin{cases}
            u\in S_\omega &\mbox{ in } \Omega^\pm\\
            u_\nu^+-u_\nu^-=g &\mbox{ on } \Gamma\\
            u=\varphi &\mbox{ on } \partial B_1
        \end{cases}
    \end{align*}
    with $g\in L^\infty(\Gamma)$ and $\supp(g)\subset \Gamma\cap B_{1-2r}$ for some $0<r<1/4$, $\varphi\in C^{\alpha_0}(\partial B_1)$ and $\Psi\in C^{1, \alpha_0}(\overline{B_1'})$. Then $u\in C^{\beta}(\overline{B_1})$, with $0<\beta\leq \min\{\beta_0,\alpha_0/2\}$ and
    \[
    \|u\|_{C^{\beta}(\overline{B_1})}\leq \frac{C}{r^\gamma}(\|\varphi\|_{C^{\alpha_0}(\partial B_1)}+\|g\|_{L^\infty(\Gamma)}),
    \]
    where $\gamma= \max \{ \alpha_0,\beta_0\}$ and $C$ depends only on $d,\lambda,\Lambda,\alpha_0, [\Psi]_{C^{1,\alpha_0}}$  and $\omega$.
\end{Proposition}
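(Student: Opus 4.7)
The plan is to follow the classical Caffarelli-Cabré route \cite{CC}: establish boundary H\"older continuity via a barrier argument, and then glue it with the interior H\"older estimate (Theorem \ref{Thm:int_Holder}) by the usual dichotomy.

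First I would apply the ABP estimate (Theorem \ref{Thm:deg_ABP}) to $u - \sup_{\partial B_1}\varphi$ and to $\inf_{\partial B_1}\varphi - u$ in order to bound
\[
\| u \|_{L^\infty(B_1)} \leq C\bigl(\| \varphi \|_{L^\infty(\partial B_1)} + \| g \|_{L^\infty(\Gamma)}\bigr),
\]
reducing the problem to controlling the H\"older seminorm.

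The main step is a boundary H\"older estimate: for every $x_0 \in \partial B_1$,
\[
|u(x) - \varphi(x_0)| \leq C r^{-\gamma}\bigl(\| \varphi \|_{C^{\alpha_0}(\partial B_1)} + \| g \|_{L^\infty(\Gamma)}\bigr)|x - x_0|^{\beta}
\]
for all $x \in \overline{B_1}$, with $\beta = \min\{\beta_0, \alpha_0/2\}$ and $\gamma = \max\{\beta_0, \alpha_0\}$. The crucial observation here is that the support condition $\supp(g) \subset \Gamma \cap B_{1-2r}$ guarantees $g \equiv 0$ on $\Gamma \cap B_\rho(x_0)$ for every $\rho \leq r$, so any barrier $h$ which is $C^2$ across $\Gamma$ automatically satisfies the transmission inequality $h_\nu^+ - h_\nu^- = 0 = g$ there. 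I would build $h$ from the exterior ball condition of $B_1$ at $x_0$: for instance, $h(x) = C(|x - y_0|^{-q} - 1)$ with $y_0$ a tangent exterior center and $q = q(d, \lambda, \Lambda)$ large enough, rescaled to radius $\rho \sim r$. This is a classical supersolution of $\mathcal{M}^-(D^2 h) \leq 0$, and since $\omega \geq 0$ one immediately has $\omega \mathcal{M}^-(D^2 h) \leq 0$, i.e.\ $h \in \overline{S}_\omega(0)$. Choosing the multiplicative constant to dominate both the H\"older oscillation of $\varphi$ on $\partial B_1 \cap B_\rho(x_0)$ and the uniform bound on $u$ on $\partial B_\rho(x_0) \cap B_1$, the difference $h - (u - \varphi(x_0))$ lies in $\overline{S}_\omega(0)$ with transmission datum $-g \equiv 0$ and is non-negative on the relative boundary; Theorem \ref{Thm:deg_ABP} then yields $u - \varphi(x_0) \leq h$, and the lower bound is symmetric. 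The factor $r^{-\gamma}$ stems from this rescaling, balancing the H\"older fluctuation of $\varphi$ on scale $1$ against the scaling of the barrier to radius $r$.

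Finally I would combine boundary and interior estimates by the standard dichotomy: for $x, y \in \overline{B_1}$ with $d_x := \dist(x, \partial B_1) \leq d_y$, if $|x - y| \geq d_x/2$ both points are comparable to a common boundary point and the boundary estimate applies; otherwise $B_{d_x/2}(x) \subset B_1$ and a rescaling of Theorem \ref{Thm:int_Holder} on this ball yields the desired bound with exponent $\beta_0 \geq \beta$. The main obstacle is the boundary barrier construction, but the degeneracy $\omega$ turns out to be harmless: since $\omega \geq 0$, any classical $\mathcal{M}^-$ supersolution automatically lies in $\overline{S}_\omega(0)$, so the standard uniformly elliptic barriers of \cite{CC} apply without modification, and the only real work is book-keeping the $r^{-\gamma}$ factor through the rescaling.
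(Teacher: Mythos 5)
Your proposal is correct and is precisely the ``usual way'' the paper has in mind: the paper gives no proof of Proposition \ref{Pro:Holder_global} and simply refers to \cite{CC}, and your argument (ABP for the sup bound, a classical exterior-ball barrier at $\partial B_1$ made admissible by the fact that $\supp(g)\subset B_{1-2r}$ forces $g\equiv 0$ near the boundary and by $\omega\geq 0$ rendering uniformly elliptic barriers valid in the $S_\omega(0)$ classes, then the standard interior/boundary dichotomy with Theorem \ref{Thm:int_Holder}) is exactly that route. The only quibble is a sign slip in the barrier ($C(|x-y_0|^{-q}-1)$ as written is a subsolution, $\mathcal{M}^-(D^2h)\geq 0$ for large $q$; one uses $\pm h$ for the two one-sided bounds), which does not affect the argument.
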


\section{Flat interface problems}\label{Section:flat}

In this section we do a comprehensive study of flat interface problems degenerating as $\omega(x)=|x_d|^{a}$, with $a$ constant, $0 <a< 1$.
\begin{align}\label{eq:flat}
    \begin{cases}
        \omega(x)F^\pm(D^2u)=f^\pm, &\mbox{ in } B_1^\pm\\
        u_{x_d}^+-u_{x_d}^-=g, &\mbox{ on } T=B_1\cap\{x_d=0\}
    \end{cases}
\end{align}

\subsection{Viscosity solutions}
In this section we use both notions of $C$ and $L^p$ viscosity solutions to better describe the behavior of solutions to \eqref{eq:flat}. 
\begin{Definition}[$C$ viscosity solution for flat problem]\label{Def:visc-flat} 
    We say that $u\in USC(B_1)$ is a viscosity subsolution to \eqref{eq:flat} in $B_1$ if for any $\varphi$ touching $u$ by above at $x_0$ in $B_1$, the following holds:
    \begin{enumerate}[label=(\roman*)]
        \item if $x_0\in B_1^\pm$ and $\varphi\in C^2(B_\delta(x_0))$, then $$ \omega(x_0) F^\pm(D^2\varphi(x_0))\geq f^\pm(x_0);$$
        \item if $x_0\in T$ and $\varphi\in C^1(\overline{B_\delta^+(x_0)})\cap C^1(\overline{B_\delta^-(x_0)})$, then
        \[
        \varphi_{x_d}^+(x_0)-\varphi_{x_d}^-(x_0)\geq g(x_0)
        \]
        where $\varphi^\pm=\varphi_{|B_\delta^\pm(x_0)}$.
    \end{enumerate}
\end{Definition}

The last condition can be replaced by (see \cite{DFS2015B})
{\it 
\begin{enumerate}[label=(\roman*')]
  \setcounter{enumi}{1}
  \item Let $x_0\in T$ and 
  \[
    \varphi(x)=P(x')+p^+x_d^+-p^-x_d^-
  \]
  where $P$ is a quadratic polynomial and $p^\pm\in \Rr$. If $\varphi$ touches $u$ by above at $x_0$ then
  \[
    p^+-p^-\geq g(x_0).
  \]
\end{enumerate}
}

\begin{Lemma}
    In Definition \ref{Def:visc-flat} for subsolutions of \eqref{eq:flat}, we can replace \textit{(ii)} with the following statement: if $x_0\in T$ and $\psi\in W^{2,p}(\overline{B_\delta^+(x_0)})\cap W^{2,p}(\overline{B_\delta^-
    (x_0)})$ is of the form
\begin{align}\label{eq:equiv_psi}
 \psi(x)=P(x')+p^+x_d^+-p^-x_d^-+C|x_d|^{2-a}
\end{align}
and touches $u$ by above at $x_0$ then either
    \[
    \esslimsup_{x\to x_0}\left(\omega(x)F^\pm(D^2\psi(x))-f^\pm(x)\right)\geq 0,
    \]
    or
    \[
    p^+-p^-\geq g(x_0).
    \]
    A similar result holds for supersolutions to \eqref{eq:flat}.
\end{Lemma}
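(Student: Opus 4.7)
I would prove the two implications separately.

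The easy direction, original (ii) $\Rightarrow$ new condition, is immediate. Given $\psi$ of the form \eqref{eq:equiv_psi} touching $u$ by above at $x_0\in T$, observe that $\psi\in C^1(\overline{B_\delta^\pm(x_0)})$, since the one-sided derivative of $|x_d|^{2-a}$, namely $(2-a)\,\sgn(x_d)|x_d|^{1-a}$, vanishes at $x_d=0$. Hence $\psi$ qualifies as a test function in the original condition (ii), which directly yields $p^+-p^-=\psi_{x_d}^+(x_0)-\psi_{x_d}^-(x_0)\geq g(x_0)$, the second alternative in the new condition; the first alternative plays no role here.

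For the converse, let $\varphi\in C^1(\overline{B_\delta^+(x_0)})\cap C^1(\overline{B_\delta^-(x_0)})$ touch $u$ by above at $x_0\in T$. By the equivalent reformulation (ii') of (ii), I may assume $\varphi(x)=P(x')+p^+x_d^+-p^-x_d^-$ with $P$ a quadratic polynomial, and the goal is $p^+-p^-\geq g(x_0)$. Arguing by contradiction, I assume $p^+-p^-<g(x_0)$ and build a test function $\psi$ of the form \eqref{eq:equiv_psi} still touching $u$ by above at $x_0$ but for which the essential-limsup alternative of the new condition can be ruled out; the condition would then force the gradient alternative and contradict the assumption. The natural candidate is
\[
\psi(x):=P(x')+\delta|x'-x_0'|^2+p^+x_d^+-p^-x_d^-+C|x_d|^{2-a}
\]
with $\delta>0$ small and $C\in\Rr$. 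Its Hessian consists of $D^2 P+2\delta I_{d-1}$ in the tangential block together with an isolated $C(2-a)(1-a)|x_d|^{-a}\,e_d\otimes e_d$ entry, and the key cancellation is that $\omega(x)=|x_d|^a$ exactly absorbs this $|x_d|^{-a}$ blow-up. By uniform ellipticity,
\[
\limsup_{x\to x_0,\,x\in\Omega^\pm}\omega(x)F^\pm(D^2\psi(x))\leq \Lambda\,C\,(2-a)(1-a),
\]
while the remainder $|x_d|^a F^\pm(D^2 P+2\delta I_{d-1})$ vanishes as $x\to x_0$. Tuning $C$ small enough (negative if necessary) drives $\esslimsup\bigl(\omega F^\pm(D^2\psi)-f^\pm\bigr)$ strictly below zero and closes the contradiction.

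The delicate step, which I view as the main obstacle, is coordinating the sign of $C$ with that of $f^\pm(x_0)$ while keeping $\psi\geq u$. When $f^\pm(x_0)>0$ one can take $C\geq 0$ small and the touching from above is automatic. When $f^\pm(x_0)\leq 0$ one needs $C<0$, but the negative contribution $-|C||x_d|^{2-a}$ is of \emph{lower} order than any tangential quadratic perturbation $\delta|x'-x_0'|^2$ as $|x_d|\to 0$, so it threatens to break $\psi\geq u$. I would resolve this by a two-step perturbation: first enlarge $\varphi$ with a positive buffer $\eta|x_d|^{2-a}$ ($\eta>0$ large) to create a quantified normal margin, and then absorb the desired negative correction inside this buffer in the construction of $\psi$; alternatively, approximate $\psi$ by smooth test functions and invoke the $L^p$-versus-$C$ viscosity equivalence in the spirit of \cite{CCKS}, transferred to the transmission setting using the flat-interface existence result of Lemma \ref{Lm:exist_CCKS}. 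The supersolution case is symmetric, obtained by applying the subsolution argument to $-u$.
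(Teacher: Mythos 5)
The forward implication is fine and matches the paper. The gap is in the converse, and it sits exactly at the point you flag as "the main obstacle": your proposed resolution does not work. You want a test function of the form \eqref{eq:equiv_psi} with a \emph{large negative} coefficient $C=-L$ in front of $|x_d|^{2-a}$ (so that $\omega(x)F^{\pm}(D^2\psi)\leq |x_d|^{a}\mathcal{M}^+(D^2\psi)\lesssim -\lambda L(2-a)(1-a)+o(1)$ beats $f^{\pm}$ near $x_0$ regardless of the sign of $f^{\pm}(x_0)$), while still touching $u$ from above. Your first fix — pre-buffering $\varphi$ with $+\eta|x_d|^{2-a}$ and then "absorbing" the negative correction inside it — fails because the buffer and the correction are of the \emph{same} order in $|x_d|$: the net test function is $\varphi+(\eta-L)|x_d|^{2-a}$, and to keep it above $u$ along the normal direction you are forced into $\eta\geq L$, i.e.\ a nonnegative net coefficient, which destroys precisely the negative Hessian blow-up you need. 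Your second fix (smooth approximation plus the $L^p$/$C$ viscosity equivalence) is not an argument; it does not address how the touching-from-above is preserved. Your tangential perturbation $\delta|x'-x_0'|^2$ is irrelevant here since the loss of contact occurs along $x'=x_0'$.

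The paper's resolution is to buffer with a term of \emph{strictly lower} order in $|x_d|$, namely the linear kink $\eta|x_d|$: set $\psi=\varphi+\eta|x_d|-L|x_d|^{2-a}$. Since $2-a>1$, one has $\eta|x_d|-L|x_d|^{2-a}\geq 0$ on a small ball $B_\tau(x_0)$ for \emph{any} fixed $L$, so $\psi\geq\varphi\geq u$ with contact at $x_0$; moreover $\eta|x_d|$ merely shifts the one-sided slopes to $p^{+}+\eta$ and $p^{-}-\eta$ (so $\psi$ is still of the form \eqref{eq:equiv_psi}), and the resulting jump $p^{+}-p^{-}+2\eta$ remains $<g(x_0)$ for $\eta$ small thanks to the strict contradiction hypothesis, ruling out the gradient alternative. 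Then $L$ large rules out the esslimsup alternative for both signs simultaneously — which also removes the need for your case analysis on the sign of $f^{\pm}(x_0)$, a case analysis that is in any event problematic since $f^{+}(x_0)$ and $f^{-}(x_0)$ need not have the same sign. Without the lower-order kink your construction does not produce an admissible test function, so as written the converse direction is not proved.
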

\begin{proof}
Since $P(x')+p^+x_d^+-p^-x_d^-+C|x_d|^{2-a}\in  C^1(\overline {B_1^+})\cap C^1(\overline {B_1^-})$, it is clear that if $u$ is a $C$ viscosity subsolution to \eqref{eq:flat}, then the statement is true. To prove the converse, take $x_0\in T$ and let $ \varphi=P(x')+p^+x_d^+-p^-x_d^-$ touch $u$ by above at $x_0$. We argue by way of contradiction, assuming that
\begin{align}\label{eq:4.2cont}
    p^+-p^-<g(x_0).
\end{align}
Define $\psi(x)=\varphi(x)+\eta |x_d|-L|x_d|^{2-a}$ for $x\in B_\tau(x_0)$, where $\eta, \tau, L>0$ are to be determined. For $\eta$ small and $L$ large fixed, choose $\tau<r$ such that $\eta |x_d|-L|x_d|^{2-a}\geq 0$ in $B_\tau(x_0)$. In particular, $\psi$ is of the form \eqref{eq:equiv_psi} and 
\begin{align*}
    \begin{cases}
        \psi(x_0)=\varphi(x_0)=u(x_0),\\
        \psi(x)\geq \varphi(x)\geq u(x), \quad x\in B_\tau(x_0).
    \end{cases}
\end{align*}
Thus $\psi$ is a valid test function touching $u$ by above at $x_0$ and so, by the assumption, one of the following holds
\begin{equation}\label{eq:4.3psi}
    \begin{aligned}
        &\esslimsup_{x\to x_0}\left(\omega(x)F^\pm(D^2\psi(x))-f^\pm(x)\right)\geq 0,\\
        &\psi_{x_d}^+(x_0)-\psi_{x_d}^-(x_0)\geq g(x_0).
    \end{aligned}
\end{equation}
By \eqref{eq:4.2cont}, choosing $\eta$ small, 
\begin{align*}
    \psi_{x_d}^+(x_0)-\psi_{x_d}^-(x_0)=\varphi_{x_d}^+(x_0)-\varphi_{x_d}^-(x_0)+2\eta<g(x_0).
\end{align*}
Therefore the first inequality in \eqref{eq:4.3psi} must hold. Let $E_d=e_d\otimes e_d$. Then
\[
D^2\psi = D^2_{x'}P-L C|x_d|^{-a}E_d
\]
where $C=C(a)$.  Therefore
\begin{align*}
    &\esslimsup_{x\to x_0}\left( |x_d|^{a}\mathcal{M}^+(D^2\psi(x))-f^\pm(x)\right)\\
    \leq \, &\esslimsup_{x\to x_0}\left(  |x_d|^{a}\left(\Lambda (d-1)|D^2_{x'}P|-LC|x_d|^{-a}\right)-f^\pm(x)\right)\\
    \leq \,& -LC+\|f^\pm\|_{L^\infty}<0
\end{align*}
choosing $L>0$ sufficiently large. Here $C=C(d,\lambda,a)$. However, by ellipticity, and \eqref{eq:4.3psi},
\begin{align*}
    0\leq &\,\esslimsup_{x\to x_0}\left( |x_d|^{a}F^\pm(D^2\psi(x))-f^\pm(x)\right)\\
    \leq &\, \esslimsup_{x\to x_0}\left( |x_d|^{a}\mathcal{M}^+(D^2\psi(x))-f^\pm(x)\right)
\end{align*}
which is a contradiction.

\end{proof}

\subsection{Lower and upper $\varepsilon$-envelopes}

As is usual in the literature, we will use a family of regularizations in the $x'$-direction which was introduced in \cite{DFS2018}.
\begin{Definition}
    Given $u\in USC(B_1)$ and any $\varepsilon>0$, we define the upper $\varepsilon$-envelope of $u$ in the $x'$-direction as
    \[
    u^\varepsilon(y',y_d)=\sup_{x\in \overline{B_\rho}\cap \{x_d=y_d\}}\big\{u(x',y_d)-\frac{1}{\varepsilon}|x'-y'|^2\big\}
    \]
    for $y=(y',y_d)\in \overline{B_\rho}\subset B_1$. Similarly, given $u\in LSC(B_1)$, we define the lower $\varepsilon$-envelope of $u$ in the $x'$-direction as
    \[
    u_\varepsilon(y',y_d)=\inf_{x\in \overline{B_\rho}\cap \{x_d=y_d\}}\big\{u(x',y_d)+\frac{1}{\varepsilon}|x'-y'|^2\big\}
    \]
    for $y=(y',y_d)\in \overline{B_\rho}\subset B_1$.
\end{Definition}
Note that there is $y_\varepsilon\in \overline{B_\rho}\cap \{x_d=y_d\}$ such that
\[
u^\varepsilon(y)=u(y_\varepsilon)-\frac{1}{\varepsilon}|y_\varepsilon'-y'|^2
\]
with $|y'-y_\varepsilon'|\leq \sqrt{2\varepsilon\|u\|_\infty}$, since $u^\varepsilon(y)\geq u(y)$ and
\[
\frac{1}{\varepsilon}|y_\varepsilon'-y'|^2=u(y_\varepsilon)-u^\varepsilon(y)\leq u(y_\varepsilon)-u(y).
\]

The following result was proven in \cite[Lemma 3.1]{DFS2018}.
\begin{Lemma}\label{Lm:prop_env}
    The following properties hold:
    \begin{enumerate}
        \item $u^\varepsilon\geq u$ in $B_\rho$ and $\limsup_{\varepsilon\to 0}u^\varepsilon=u$;
        \item $u^\varepsilon\in C^{0,1}_{y'}(\overline{B_\rho})$, with $[u^\varepsilon]_{C^{0,1}_{y'}(\overline{B_\rho})}\leq 6\rho/\varepsilon$;
        \item $u^\varepsilon\in C^{1,1}_{y'}$ by below in $B_\rho$. Thus, $u^\varepsilon$ is punctually second order differentiable in the $x'$-direction almost everywhere in $B_\rho$.
    \end{enumerate}
\end{Lemma}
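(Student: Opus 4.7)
The three properties are essentially independent, so I would attack them one at a time using only the explicit formula for $u^\varepsilon$ and the estimate $|y'-y_\varepsilon'|\le \sqrt{2\varepsilon\|u\|_\infty}$ already derived just before the lemma.

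For property (1), the inequality $u^\varepsilon\ge u$ is immediate: taking $x'=y'$ as a competitor in the supremum yields $u^\varepsilon(y)\ge u(y)$. For the limsup statement, I would fix $y\in B_\rho$ and use the maximizer $y_\varepsilon$ together with the displayed distance bound to conclude $y_\varepsilon'\to y'$ as $\varepsilon\to 0$ while $(y_\varepsilon)_d=y_d$. Then upper semicontinuity of $u$ gives
\[
\limsup_{\varepsilon\to 0} u^\varepsilon(y)\le \limsup_{\varepsilon\to 0} u(y_\varepsilon)\le u(y),
\]
and together with $u^\varepsilon\ge u$ this forces $\limsup u^\varepsilon=u$ pointwise.

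For property (2), pick $y_1,y_2\in \overline{B_\rho}$ with $(y_1)_d=(y_2)_d$ and denote by $y_{1,\varepsilon}$ the maximizer defining $u^\varepsilon(y_1)$; since $y_{1,\varepsilon}$ is an admissible competitor in the supremum defining $u^\varepsilon(y_2)$, I get
\[
u^\varepsilon(y_2)-u^\varepsilon(y_1)\ge \frac{1}{\varepsilon}\bigl(|y_{1,\varepsilon}'-y_1'|^2-|y_{1,\varepsilon}'-y_2'|^2\bigr).
\]
Expanding the difference of squares as $(2y_{1,\varepsilon}'-y_1'-y_2')\cdot(y_2'-y_1')$ and estimating $|2y_{1,\varepsilon}'-y_1'-y_2'|\le 4\rho$ plus symmetrizing $y_1\leftrightarrow y_2$ gives the Lipschitz constant $6\rho/\varepsilon$ (with a bit of slack).

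Property (3) is the main point, and I would derive it from the standard sup-convolution identity: adding and subtracting $|y'|^2/\varepsilon$ inside the sup gives
\[
u^\varepsilon(y)+\frac{|y'|^2}{\varepsilon}=\sup_{x'}\Bigl\{u(x',y_d)-\frac{|x'|^2}{\varepsilon}+\frac{2}{\varepsilon}x'\cdot y'\Bigr\},
\]
which is a supremum of affine functions in $y'$, hence convex in $y'$ for every fixed $y_d$. Equivalently, $u^\varepsilon$ is $2/\varepsilon$-semiconvex in $y'$, which is precisely the statement that $u^\varepsilon$ is $C^{1,1}_{y'}$ from below. Applying the classical Alexandrov theorem slice-by-slice to the convex function $y'\mapsto u^\varepsilon(y',y_d)+|y'|^2/\varepsilon$ yields a.e. pointwise second-order differentiability in $y'$ on each slice, and a Fubini argument promotes this to a.e. differentiability in the full ball $B_\rho$. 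The only subtle point worth flagging is that the sup-convolution acts only in the tangential directions, so the semiconvexity (and hence Alexandrov conclusion) must be kept strictly in the $y'$ variable, with the $y_d$-direction carried along as a parameter.
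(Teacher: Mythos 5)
Your proof is correct; note that the paper does not prove this lemma at all but simply cites \cite[Lemma 3.1]{DFS2018}, and your argument (competitor $x'=y'$ plus upper semicontinuity for (1), reusing the maximizer and expanding the difference of squares for (2), the sup-of-affine-functions identity giving $2/\varepsilon$-semiconvexity in $y'$ plus Alexandrov and Fubini for (3)) is exactly the standard one underlying that reference. The only point to keep half an eye on is the measurability of the exceptional set in the slice-by-slice Fubini step for (3), which is routine but worth a word.
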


\begin{Proposition}\label{Pro:S-env}
    Let $f^\pm\in C(B_1^\pm)$ and $g\in C(T)$. If $u$ is a bounded viscosity subsolution to \eqref{eq:flat} then, for any $\varepsilon>0$ small, $u^\varepsilon$ is a viscosity subsolution to
    \begin{align*}
        \begin{cases}
            \omega(x) F^\pm(D^2u^\varepsilon)=f_\varepsilon^\pm, & \mbox{ in } B_r^\pm,\\
            (u^\varepsilon)_{x_d}^+-(u^\varepsilon)_{x_d}^-=g_\varepsilon, & \mbox{ on } T_r=B_r\cap \{x_d=0\},
        \end{cases}
    \end{align*}
    with $r\leq \rho-r_\varepsilon$, $r_\varepsilon=(2\varepsilon\|u\|_{L^\infty(B_1)})^{1/2}$, $f_\varepsilon^\pm=f-\gamma_{f^\pm}(r_\varepsilon)$, and $g_\varepsilon=g-\gamma_g(r_\varepsilon)$.
\end{Proposition}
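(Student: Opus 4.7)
The strategy is the standard sup-convolution argument, adapted to exploit the fact that the weight $\omega(x)=|x_d|^{a}$ depends \emph{only on $x_d$} and the envelope $u^\varepsilon$ is taken only in the $x'$-direction, with $x_d$ held fixed. Thus a shift in $x'$ leaves $\omega$ unchanged and, crucially, preserves the stratification of $B_1$ into $B_1^+$, $B_1^-$, and $T$: if $y_0$ lies on one side (respectively on the interface) then every competitor $y_\varepsilon=(y_\varepsilon',y_{0,d})$ in the sup defining $u^\varepsilon(y_0)$ lies on the same side (respectively on the interface). The condition $r\leq \rho-r_\varepsilon$ with $|y_\varepsilon'-y_0'|\leq r_\varepsilon$ guarantees moreover that $y_\varepsilon\in \overline{B_\rho}$, so that $u$ may be tested there.

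For the interior part, fix $y_0\in B_r^\pm$ and let $\varphi\in C^2(B_\delta(y_0))$ touch $u^\varepsilon$ from above at $y_0$; pick $y_\varepsilon=(y_\varepsilon',y_{0,d})\in \overline{B_\rho^\pm}$ realizing the sup, so $|y_\varepsilon-y_0|=|y_\varepsilon'-y_0'|\leq r_\varepsilon$. I would then consider the shifted test function
\[
\psi(x',x_d):=\varphi\!\left(x'-(y_\varepsilon'-y_0'),\,x_d\right)+\tfrac{1}{\varepsilon}|y_\varepsilon'-y_0'|^2,
\]
which by a direct comparison of the sup at $y_0$ and at $y_\varepsilon$ touches $u$ from above at $y_\varepsilon$. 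Since $y_{\varepsilon,d}$ has the same sign as $y_{0,d}$, applying the viscosity subsolution inequality for $u$ at $y_\varepsilon$ and using $D^2\psi(y_\varepsilon)=D^2\varphi(y_0)$ together with $\omega(y_\varepsilon)=\omega(y_0)$ gives
\[
\omega(y_0)F^\pm(D^2\varphi(y_0))\geq f^\pm(y_\varepsilon)\geq f^\pm(y_0)-\gamma_{f^\pm}(r_\varepsilon)= f^\pm_\varepsilon(y_0).
\]

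For the interface part, fix $y_0\in T_r$ and, using the equivalent formulation (ii'), let $\varphi(x',x_d)=P(x')+p^+x_d^+-p^-x_d^-$ touch $u^\varepsilon$ from above at $y_0$. Choose $y_\varepsilon=(y_\varepsilon',0)\in T$ realizing the sup. The same $x'$-shift,
\[
\psi(x',x_d):=P\!\left(x'-(y_\varepsilon'-y_0')\right)+p^+x_d^+-p^-x_d^-+\tfrac{1}{\varepsilon}|y_\varepsilon'-y_0'|^2,
\]
is still of the form quadratic-in-$x'$ plus the piecewise-linear part in $x_d$, touches $u$ from above at $y_\varepsilon$, and keeps the same coefficients $p^\pm$ in front of $x_d^\pm$. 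The transmission inequality for $u$ at $y_\varepsilon$ then yields $p^+-p^-\geq g(y_\varepsilon)\geq g(y_0)-\gamma_g(r_\varepsilon)=g_\varepsilon(y_0)$, which is exactly what is required. The main obstacle is not an analytic one but a bookkeeping one: one must check that the specific class of test functions used for the transmission condition is \emph{closed under $x'$-translation}, and that is precisely why formulation (ii') (or equivalently the $W^{2,p}$ version of the preceding lemma, whose extra $C|x_d|^{2-a}$ term is also invariant under $x'$-shift) is the natural one to use. A generic $C^1$ test function would also be preserved under the shift, but (ii') makes the identity $p^+-p^-$ before and after the translation completely transparent.
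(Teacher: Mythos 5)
Your argument is correct and follows essentially the same route as the paper: for the interior part the paper uses exactly the same $x'$-translated test function $\Phi(y)=\varphi(y+\bar x-\bar x_\varepsilon)+\tfrac{1}{\varepsilon}|\bar x_\varepsilon'-\bar x'|^2$, relying on the fact that the shift has zero $e_d$-component so that $\omega$ and the stratification into $B_1^\pm$ and $T$ are preserved. For the transmission condition the paper simply defers to the argument in \cite{SS}, which is precisely the translation-invariance of the test-function class (ii$'$) that you spell out.
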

\begin{proof}
    The proof consists of two steps. First we prove that $\omega(x)F^\pm(D^2u^\varepsilon)=f_\varepsilon^\pm$ in $B_r^\pm$, following a similar reasoning to \cite[Lemma 3.1]{DFS2018}. Then, the proof of the transmission condition $(u^\varepsilon)_{x_d}^+-(u^\varepsilon)_{x_d}^-=g_\varepsilon$ on $T_r$, follows from exactly the same argument as in \cite{SS}.

    For the first part, let $\varphi\in C^2(B_r)$ touch $u^\varepsilon$ by above at $\bar x\in B_r^+$. Then, for $\varepsilon$ small,
    \[
    u^\varepsilon(\bar x) = u(\bar x_\varepsilon)-\frac{1}{\varepsilon}|\bar x_\varepsilon'-\bar x'|^2,
    \]
    with $|\bar x_\varepsilon'-\bar x'|^2\leq 2\varepsilon\|u\|_\infty$. Consider the function
    \[
    \Phi(y)=\varphi(y+\bar x-\bar x_\varepsilon)+\frac{1}{\varepsilon}|\bar x_\varepsilon'-\bar x|^2.
    \]
    with our choice of $r$ and $y\in B_\rho^+$ close enough to $\bar x_\varepsilon$, the point $y+\bar x-\bar x_\varepsilon\in B_\rho^+$. Thus, by the definition of $u^\varepsilon$,
    \[
    u(y)\leq u^\varepsilon(y+\bar x-\bar x_\varepsilon)+\frac{1}{\varepsilon}|\bar x_\varepsilon-\bar x'|^2
    \]
and so,
\[
u(y)\leq \varphi(y+\bar x-\bar x_\varepsilon)+\frac{1}{\varepsilon}|\bar x_\varepsilon'-\bar x'|^2,
\]
with equality at $y=\bar x_\varepsilon$, since $\varphi(\bar x)=u ^\varepsilon (\bar x)$. Thus, the function $\Phi$ touches $u$ by above at $\bar x_\varepsilon$. Therefore,
\begin{align*}
    \omega(\bar x) F^+(D^2\varphi(\bar x))&=\omega(\bar x)F^+(D^2\Phi(\bar x_\varepsilon))\\
&\geq\, f^+(x_\varepsilon)\geq f^+(\bar x)-\gamma_{f^+}\left(\sqrt{2\varepsilon\|u\|_\infty}\right)=f^*_\varepsilon(\bar x)
\end{align*}
as intended. 

The transmission condition follows identically to \cite{SS} since it is independent of the PDE.

\end{proof}

\subsection{Comparison principle and uniqueness}

We will also make use of the classical notion of the following half-relaxed limits. Let $\{u_k\}_k$ be a sequence of functions. For $x\in \overline{B_1}$, we define
\[
    {\limsup}^*\, u_k(x)=\lim_{j\to \infty}\sup\left\{ u_k(y)\,:\,k\geq j, \, y\in \overline{B_1},\, \mbox{ and } |y-x|\leq \frac{1}{j}\right\}.
\]
Similarly, for $x\in \overline{B_1}$, we define
\[
    {\liminf}_*\, u_k(x)=\lim_{j\to \infty}\inf\left\{ u_k(y)\,:\,k\geq j, \, y\in \overline{B_1},\, \mbox{ and } |y-x|\leq \frac{1}{j}\right\}.
\]
Then ${\limsup}^*\, u_k\in USC(\overline{B_1})$ and ${\liminf}_*\, u_k\in LSC(\overline{B_1})$. We have the following lemma from \cite{CIL}.
\begin{Lemma}\label{Lm:limits}
    Let $\{u_k\}_k\subset USC(\overline{B_1})$ and $u={\liminf}_*\, u_k$. Fix $x_0\in \overline{B_1}$. If a continuous function $\varphi$ touches $u$ from above at $x_0$, then there exist indexes $k_j\to \infty$, points $x_j\in \overline{B_1}$, and functions $\varphi_j\in C$ such that $\varphi_j$ touches $u_{k_j}$ by above at $x_j$,
    \[
    x_j\to x_0, \quad \mbox{ and } \quad u_{k_j}(x_j)\to u(x_0), \quad \mbox{ as } j\to \infty.
    \]
    Moreover
    \[
    \varphi_j(x)=\varphi(x)-\varphi(x_j)+u_{k_j}(x_j)+\delta(|x-x_0|^2-|x_j-x_0|^2),
    \]
    for an arbitrary $\delta>0$.
\end{Lemma}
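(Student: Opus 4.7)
The plan is to invoke the classical Barles--Perthame perturbation argument: (i) strictify the contact at $x_0$ by adding a quadratic bump; (ii) use upper semicontinuity and compactness to produce a maximizer of the perturbed gap on a small closed ball; (iii) use the definition of the half-relaxed limit to drag these maximizers back to $x_0$ and obtain vertical convergence of the values.

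First I would set $\psi(x):=\varphi(x)+\delta|x-x_0|^2$. Since $\varphi$ touches $u$ from above at $x_0$ one has $\psi(x_0)=u(x_0)$ and, for $r>0$ sufficiently small, $\psi>u$ on $\overline{B_r(x_0)}\setminus\{x_0\}$, so that the contact at $x_0$ is now strict. For each $k$, the function $u_k-\psi$ is upper semicontinuous on the compact set $\overline{B_r(x_0)}$, hence attains its maximum at some $x_k\in\overline{B_r(x_0)}$. Defining
\[
\varphi_k(x):=\psi(x)+u_k(x_k)-\psi(x_k)=\varphi(x)-\varphi(x_k)+u_k(x_k)+\delta\bigl(|x-x_0|^2-|x_k-x_0|^2\bigr),
\]
one obtains a continuous function that touches $u_k$ from above at $x_k$, in exactly the form prescribed by the lemma.

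Next I would extract the subsequence. By the very definition of $u=\liminf_* u_k$ at $x_0$, there exist $k_j\to\infty$ and $z_j\to x_0$ with $u_{k_j}(z_j)\to u(x_0)$. Maximality of $x_{k_j}$ gives
\[
u_{k_j}(x_{k_j})-\psi(x_{k_j})\;\geq\;u_{k_j}(z_j)-\psi(z_j)\;\longrightarrow\;u(x_0)-\psi(x_0)=0.
\]
After passing to a further subsequence so that $x_{k_j}\to x^\ast\in\overline{B_r(x_0)}$, the resulting lower bound $\liminf_j u_{k_j}(x_{k_j})\geq\psi(x^\ast)$, combined with the matching upper bound $\limsup_j u_{k_j}(x_{k_j})\leq u(x^\ast)$ supplied by the half-relaxed-limit machinery, yields $u(x^\ast)\geq\psi(x^\ast)$. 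The strict separation $\psi>u$ off $x_0$ then forces $x^\ast=x_0$, hence $x_{k_j}\to x_0$ and $u_{k_j}(x_{k_j})\to u(x_0)$.

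The hard part will be this last squeeze: one has to pair the pointwise lower bound produced by the maximization with a matching upper bound coming from upper semicontinuity together with the definition of the half-relaxed limit, and then read off rigidity from the strict quadratic separation manufactured in the first step. The auxiliary parameter $\delta>0$ survives in the final formula precisely because it measures this separation and is what makes the compactness step go through.
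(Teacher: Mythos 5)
Your argument is the classical Barles--Perthame stability proof, which is exactly what the paper relies on here (it gives no proof of its own and simply cites Crandall--Ishii--Lions): the strictification by $\delta|x-x_0|^2$, the maximization of $u_k-\psi$ on a small closed ball, and the resulting explicit form of $\varphi_j$ all match the standard argument and reproduce the formula in the statement verbatim.

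The one step that does not close as written is the ``matching upper bound'' $\limsup_j u_{k_j}(x_{k_j})\leq u(x^\ast)$. For $u={\liminf}_*\,u_k$ (the hypothesis as literally stated), the half-relaxed-limit machinery yields the \emph{opposite} inequality: if $k_j\to\infty$ and $x_{k_j}\to x^\ast$, then $u(x^\ast)\leq \liminf_j u_{k_j}(x_{k_j})$, because $u(x^\ast)$ is a limit of infima over shrinking neighbourhoods that eventually contain the pairs $(k_j,x_{k_j})$. With only the two lower bounds $\liminf_j u_{k_j}(x_{k_j})\geq \psi(x^\ast)$ and $\liminf_j u_{k_j}(x_{k_j})\geq u(x^\ast)$ in hand, there is no contradiction with the strict separation $\psi>u$ off $x_0$, and the squeeze forcing $x^\ast=x_0$ collapses. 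The inequality you actually need is the defining property of the \emph{upper} half-relaxed limit ${\limsup}^*\,u_k$ --- which is the object to which the paper applies the lemma in Theorem \ref{Thm:S-dif} (there $w=\limsup_{\varepsilon\to0}w_\varepsilon$ is tested from above). So your proof is correct for the ${\limsup}^*$ version of the lemma (equivalently, for ${\liminf}_*$ with touching from \emph{below}), and you should say explicitly that this is the version being proved rather than extracting an upper bound from the ${\liminf}_*$ definition, which does not supply one. A minor additional point: to ensure $u(x_0)$ is finite and the maximizers $x_k$ carry useful information, one implicitly uses that the $u_k$ are locally uniformly bounded, which holds in the paper's application.
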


The following result can be found in \cite[Corollary 1.8]{MW}, which we present in a simplified form. 
\begin{Corollary}\label{Cor:MaWang}
    Let $u$ be a viscosity solution to
    \[
    \begin{cases}
        F(D^2u)=f &\mbox{ in } Q_1^+\\
        u= \phi(x) &\mbox{ on } Q_1',
    \end{cases}
    \]
    where $F$ is uniformly elliptic, $\phi$ is $C^{1,\alpha}$ at $0$, and 
    \begin{align}\label{eq:MaWang_f}
        r\left(\intav{Q_r^+}f^p\right)^\frac{1}{p}\leq C r^\alpha.
    \end{align}
    Then $u$ is $C^{1,\alpha}$ at $0$, that is, there exists an affine function $\ell$ such that for every $0<r<1/2$,
    \begin{align*}
        \sup_{Q_r^+}|u-\ell|\leq C r^{1+\alpha}.
    \end{align*}
\end{Corollary}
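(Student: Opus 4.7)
This is a standard boundary pointwise $C^{1,\alpha}$ estimate for uniformly elliptic equations; I would prove it by Caffarelli's perturbation-and-iteration scheme, implicitly assuming (as is clearly intended) that $\alpha\le\alpha_0$. After replacing $u$ by $u-\ell_\phi$, where $\ell_\phi(x)=\phi(0)+D_{x'}\phi(0)\cdot x'$ is extended to be independent of $x_d$ (so $D^2\ell_\phi=0$ and the equation is unchanged), I may assume $\phi(0)=0$, $D_{x'}\phi(0)=0$, and $|\phi(x')|\le C_\phi|x'|^{1+\alpha}$ on $Q_1'$. A further normalization reduces $\|u\|_{L^\infty(Q_1^+)}$, $C_\phi$, and the constant in \eqref{eq:MaWang_f} to a small $\delta>0$ to be chosen. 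The goal is then to produce a sequence of affine functions $\ell_k$ and a fixed $\rho\in(0,1/2)$ with $\sup_{Q_{\rho^k}^+}|u-\ell_k|\le\rho^{k(1+\alpha)}$.

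\textbf{Approximation and first step.} By a standard compactness-contradiction argument using ABP (Theorem \ref{Thm:unifelip_ABP}), global boundary $C^\beta$ regularity, and stability of viscosity solutions via half-relaxed limits (Lemma \ref{Lm:limits}), one proves: for every $\varepsilon>0$ there exists $\delta>0$ such that, under the reduced hypotheses, there is a viscosity solution $v$ of $\widetilde F(D^2v)=0$ in $Q_{3/4}^+$ with $v=0$ on $Q_{3/4}'$ and $\|u-v\|_{L^\infty(Q_{3/4}^+)}\le\varepsilon$. The boundary $C^{1,\alpha_0}$ theorem for such $v$ (Silvestre--Sirakov / Krylov) then yields a linear function $\ell_0(x)=\bar b\,x_d$---tangential directions are absent because $v\equiv 0$ on $Q_{3/4}'$---with $|\bar b|\le M$ universal and $\sup_{Q_\rho^+}|v-\ell_0|\le C_0\rho^{1+\alpha_0}$. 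Picking $\rho$ so small that $C_0\rho^{1+\alpha_0}\le\tfrac12\rho^{1+\alpha}$ and then $\varepsilon\le\tfrac12\rho^{1+\alpha}$ gives $\sup_{Q_\rho^+}|u-\ell_0|\le\rho^{1+\alpha}$ and $|\ell_0|\le M|x|$.

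\textbf{Iteration.} Rescale by setting
\[
u_1(x)=\rho^{-(1+\alpha)}\bigl(u(\rho x)-\ell_0(\rho x)\bigr),\qquad f_1(x)=\rho^{1-\alpha}f(\rho x),\qquad \phi_1(x')=\rho^{-(1+\alpha)}\bigl(\phi(\rho x')-\ell_0(\rho x',0)\bigr).
\]
Then $u_1$ is a viscosity solution of $F_1(D^2u_1)=f_1$ in $Q_1^+$, where $F_1(M):=\rho^{1-\alpha}F(\rho^{\alpha-1}M)$ has the same $(\lambda,\Lambda)$ ellipticity. The crucial check is that the Morrey-type condition \eqref{eq:MaWang_f} is scale invariant under the map $f\mapsto\rho^{1-\alpha}f(\rho\,\cdot)$ --- this is precisely the algebraic reason the exponent $r^\alpha$ appears in \eqref{eq:MaWang_f} --- and that $\phi_1$ again satisfies $|\phi_1(x')|\le C_\phi|x'|^{1+\alpha}$ by the pointwise $C^{1,\alpha}$ bound on $\phi$. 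Thus the hypotheses are reproduced for $u_1$ with the same constants, and the approximation step applies again, producing $\bar b_1 e_d$ with $|\bar b_1|\le M$. Unscrambling gives an affine $\ell_1$ with $|\ell_1-\ell_0|\lesssim\rho^{1+\alpha}$ on $Q_\rho^+$ and $\sup_{Q_{\rho^2}^+}|u-\ell_1|\le\rho^{2(1+\alpha)}$. By induction one obtains a Cauchy sequence $\ell_k\to\ell$, and a standard dyadic interpolation converts the geometric decay into the required estimate for every $r\in(0,1/2)$.

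\textbf{Main obstacle.} The heart of the proof is the approximation lemma with $L^p$ source and flat boundary data: one must combine $L^p$-stability for viscosity solutions (legitimate because the choice $p>d$ from \eqref{eq:MaWang_f} guarantees the right embedding, cf.\ Lemma \ref{Lm:exist_CCKS}) with the boundary regularity theory on $Q_{3/4}'$, and check that limiting operators $F_k$ stay in a compact class of uniformly elliptic operators so that $F$-harmonic functions with zero flat boundary data are available. Once this is in place, the preservation of \eqref{eq:MaWang_f}, the boundary bound on $\phi$, and the ellipticity at each rescaling step are routine algebraic consequences of choosing the power $\rho^{1-\alpha}$ in the definition of $f_1$.
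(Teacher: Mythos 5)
The paper does not prove this corollary at all: it is imported, ``in simplified form,'' from \cite[Corollary 1.8]{MW}, so there is no internal argument to compare yours against. Your sketch follows the standard Caffarelli compactness-and-iteration scheme, which is indeed how such boundary pointwise estimates are established in the cited literature, and the two algebraic checks you identify as the crux are correct: the Morrey condition \eqref{eq:MaWang_f} is invariant under $f\mapsto \rho^{1-\alpha}f(\rho\,\cdot)$ precisely because of the exponent $r^{\alpha}$ on its right-hand side, and the bound $|\phi(x')|\le C_\phi|x'|^{1+\alpha}$ survives the subtraction of $\ell_0=\bar b\,x_d$ since $\ell_0$ vanishes on $\{x_d=0\}$. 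Two remarks on what remains. First, essentially all of the analytic content sits inside the approximation lemma you only assert: with $f$ merely in $L^p$ (and one must indeed read $p>d$ into the statement, as you do) the compactness argument has to be run in the $L^p$-viscosity framework --- equicontinuity up to $Q_1'$, stability of the equation under uniform convergence with vanishing $L^p$ right-hand side, and uniformity of $\delta$ over the whole class of rescaled operators $F_k(M)=\rho^{k(1-\alpha)}F(\rho^{k(\alpha-1)}M)$ --- and the exponent bounding the admissible $\alpha$ is the Krylov boundary exponent for solutions vanishing on a flat boundary, not the interior exponent $\alpha_0$; you flag these points but do not carry them out. Second, a small normalization slip: the reduction should bring $\|u\|_{L^\infty(Q_1^+)}$ to $1$, not to $\delta$, since the induction hypothesis $\sup_{Q_{\rho^k}^+}|u-\ell_k|\le\rho^{k(1+\alpha)}$ renormalizes to $\|u_k\|_{L^\infty(Q_1^+)}\le 1$ at every step; only $C_\phi$ and the constant in \eqref{eq:MaWang_f} are made small. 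Neither point undermines the strategy, which is the correct one for this result.
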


\begin{Theorem}\label{Thm:S-dif}
    Let $f_1^\pm, f_2^\pm\in C(B_1^\pm)\cap L^\infty(B_1^\pm)$  and $g_1,g_2\in C(T)$. Assume that $u\in USC(\overline{B_1})$ and $v\in LSC(\overline{B_1})$ are bounded and satisfy 
    \begin{align*}
        \begin{cases}
            \omega(x)F^\pm(D^2u)\geq f_1^\pm, & \mbox{ in } B_1^\pm\\
            u_{x_d}^+-u_{x_d}^-\geq g_1, &\mbox{ on } T
        \end{cases}
    \end{align*}
    and
    \begin{align*}
        \begin{cases}
            \omega(x)F^\pm(D^2v)\leq f_2^\pm, & \mbox{ in } B_1^\pm\\
            v_{x_d}^+-v_{x_d}^-\leq g_2, &\mbox{ on } T
        \end{cases}
    \end{align*}
    in the viscosity sense. Then, $w=u-v$ satisfies
    \begin{align*}
        \begin{cases}
            \omega(x)\mathcal{M}^+(D^2w)\geq f_1^\pm-f_2^\pm, & \mbox{ in } B_1^\pm\\
            w_{x_d}^+-w_{x_d}^-\geq g_1-g_2, &\mbox{ on } T
        \end{cases}
    \end{align*}    
    in the viscosity sense.
    
\end{Theorem}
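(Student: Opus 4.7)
The proof follows the standard viscosity-theoretic approach for differences of sub- and supersolutions, adapted to the transmission setting. The main regularization device is the $x'$-direction $\varepsilon$-envelopes from the previous subsection. Let $u^\varepsilon$ denote the upper $\varepsilon$-envelope of $u$ and $v_\varepsilon$ the lower $\varepsilon$-envelope of $v$. By Proposition~\ref{Pro:S-env} (and its dual for supersolutions), these satisfy on $B_r^\pm$ and $T_r$ (with $r=1-r_\varepsilon$) the perturbed inequalities $\omega F^\pm(D^2 u^\varepsilon)\geq f_{1,\varepsilon}^\pm$, $\omega F^\pm(D^2 v_\varepsilon)\leq f_{2,\varepsilon}^\pm$, together with the perturbed transmission conditions $(u^\varepsilon)_{x_d}^+ - (u^\varepsilon)_{x_d}^- \geq g_{1,\varepsilon}$ and $(v_\varepsilon)_{x_d}^+ - (v_\varepsilon)_{x_d}^- \leq g_{2,\varepsilon}$, with right-hand sides converging uniformly to $f_i^\pm$ and $g_i$ as $\varepsilon\to 0$.

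Next I would prove that $w^\varepsilon:=u^\varepsilon-v_\varepsilon$ satisfies the Pucci inequality and the transmission condition with the perturbed data in the viscosity sense. For the interior part, suppose $\varphi\in C^2(B_\delta(x_0))$ touches $w^\varepsilon$ from above at some $x_0\in B_r^+$. The classical Crandall--Ishii doubling variables technique (as in \cite{CIL}) applied to the penalized functional $\Phi_\sigma(x,y)=u^\varepsilon(x)-v_\varepsilon(y)-\varphi((x+y)/2)-\sigma^{-1}|x-y|^2$ produces, in the limit $\sigma\to 0$, symmetric matrices $X,Y$ in the closures of the second-order superjet of $u^\varepsilon$ and subjet of $v_\varepsilon$ at $x_0$, with $X-Y\leq D^2\varphi(x_0)$. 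Combining the viscosity PDEs with the algebraic fact $F^+(X)-F^+(Y)\leq \mathcal{M}^+(X-Y)$ delivers the required inequality for $w^\varepsilon$. For the transmission condition, if $\varphi(x)=P(x')+p^+x_d^+-p^-x_d^-$ touches $w^\varepsilon$ from above at $x_0\in T_r$, a doubling argument with penalty concentrated only in the tangential $x'$-directions produces admissible test functions of the same one-sided form touching $u^\varepsilon$ from above and $v_\varepsilon$ from below at nearby points, with slopes $p_1^\pm,p_2^\pm$ such that $p_1^\pm-p_2^\pm\to p^\pm$ in the limit. Subtracting the one-sided transmission inequalities $p_1^+-p_1^-\geq g_{1,\varepsilon}$ and $p_2^+-p_2^-\leq g_{2,\varepsilon}$ then yields the required bound.

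Finally, since $u^\varepsilon\geq u$, $v_\varepsilon\leq v$ with $u^\varepsilon\to u$ and $v_\varepsilon\to v$ pointwise via Lemma~\ref{Lm:prop_env}(1) and its analogue, we have ${\limsup}^* w^\varepsilon=w$. Invoking the subsolution counterpart of Lemma~\ref{Lm:limits} (applied to $-w^\varepsilon$) we pass the viscosity inequalities to the limit, obtaining the claim for $w=u-v$. The principal obstacle in this plan is the transmission step: the admissible test functions $P(x')+p^+x_d^+-p^-x_d^-$ are only $C^1$ on each side of $T$ with a prescribed jump in the normal derivative, so the standard Crandall--Ishii machinery must be carefully tailored to preserve this one-sided structure across the flat interface. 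The doubling procedure must not smooth out the kink, and the matching of the one-sided slopes $p^\pm$ under the limit $\sigma\to 0$ requires an argument exploiting both the specific geometry of $T=\{x_d=0\}$ and the regularity of $u^\varepsilon,v_\varepsilon$ up to $T$ provided by the envelope construction.
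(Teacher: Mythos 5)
Your plan for the interior inequality is fine in spirit (the paper simply quotes \cite[Theorem 4]{JS} for it), and your use of the $\varepsilon$-envelopes and half-relaxed limits matches the paper's framework. But the transmission step — which you yourself flag as "the principal obstacle" — is where the actual proof lives, and your proposal for it is essentially a restatement of the goal rather than an argument. The claim that a tangential doubling procedure "produces admissible test functions of the same one-sided form touching $u^\varepsilon$ from above and $v_\varepsilon$ from below at nearby points, with slopes $p_1^\pm,p_2^\pm$ such that $p_1^\pm-p_2^\pm\to p^\pm$" is precisely what has to be proved: $u^\varepsilon$ and $v_\varepsilon$ are only semicontinuous/Lipschitz in $x'$, so there is no reason their one-sided normal derivatives exist at any candidate contact point, and the Crandall--Ishii theorem on sums gives second-order jet information, not the first-order one-sided Neumann data the transmission condition requires.

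Two ingredients that the paper uses are missing entirely from your plan. First, the degeneracy $\omega(x)=|x_d|^{a}$ is never engaged: to force the contact point of the perturbed test function onto $T$ one must contradict the interior equations near the interface, but for a $C^2$ test function $\omega(x)\mathcal M^+(D^2\varphi)\to 0$ as $x_d\to 0$, so no contradiction is available. The paper fixes this by subtracting $C|x_d|^{2-a}$ from the test function; its Hessian blows up like $|x_d|^{-a}$ and exactly cancels the weight, making $\omega\,\mathcal M^+(D^2\varphi_\varepsilon)$ uniformly negative — this is the new $W^{2,p}$ barrier the introduction advertises, and without it the localization to $T$ fails. Second, to extract actual slopes $p_u^\pm,p_v^\pm$ satisfying the transmission inequalities pointwise, the paper runs a tangential convex-envelope/Alexandroff argument to find a point $y_\varepsilon\in T$ of twice tangential differentiability, replaces $u^\varepsilon,v_\varepsilon$ by solutions $\bar u^\varepsilon,\bar v_\varepsilon$ of one-sided Dirichlet problems in $B_\delta^\pm(x_\varepsilon)$ (which preserve the transmission inequalities by comparison), and invokes the Ma--Wang pointwise boundary $C^{1,\alpha}$ estimate (Corollary \ref{Cor:MaWang}, with $\alpha=1-a$ to absorb the unbounded source $f\omega^{-1}$) to guarantee that $\bar u^\varepsilon,\bar v_\varepsilon$ are classically differentiable at $y_\varepsilon$ from each side. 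Only then can the one-sided inequalities be subtracted. Your doubling sketch supplies none of this, so as written the proof has a genuine gap at its central step.
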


\begin{proof}
    The fact that $\omega(x)\mathcal{M}^+(D^2w)\geq f_1^\pm-f_2^\pm$ in $B_1^\pm$ follows from the argument in \cite[Theorem 4]{JS}, noting that within either $B_1^+$ or $B_1^-$, the notions of $C$ and $L^p$ viscosity solutions coincide.

    We need to show the transmission condition. Let $x_0=(x_0',0)\in T$ and assume that $P(x')+p^+x_d^+-p^-x_d^-$ touches $w$ by above at $x_0$, with $P$ being a quadratic polynomial and $p^\pm\in \Rr$. We aim at showing that 
    \begin{align}\label{eq:4.4aim}
        p^+-p^-\geq g_1(x_0)-g_2(x_0).
    \end{align}
    Fix $\tau>0$ and $C>0$ large to be chosen. Then the $W^{2,p}$ test function 
    \[
    \varphi(x)=P(x')+(p^++\tau)x_d^+-(p^--\tau)x_d^--C|x_d|^{2-a}
    \]
    touches $w$ strictly by above at $x_0$, possibly in a smaller neighborhood where  $$ x_d^\pm-C(x_d^\pm)^{2-a}\geq 0.$$ 

    For $\varepsilon>0$, consider the upper and lower-envelopes $u^\varepsilon$ and $v_\varepsilon$ and take $w_\varepsilon=u^\varepsilon-v_\varepsilon$. By Lemma \ref{Lm:prop_env} \textit{(1)},
    \[
    \limsup_{\varepsilon\to 0}w_\varepsilon=w.
    \]
    By Lemma \ref{Lm:limits}, there are points $x_\varepsilon\in B_1$ with $x_\varepsilon\to x_0$, up to a subsequence, and functions
    \[
    \varphi_\varepsilon(x)=\varphi(x)-\varphi(x_\varepsilon)+w_\varepsilon(x_\varepsilon)+|x-x_0|^2-|x_\varepsilon-x_0|^2
    \]
    such that $\varphi_\varepsilon$ touches $w_\varepsilon$ strictly from above at $x_\varepsilon$. Particularly, for $\delta>0$ sufficiently small, there is $\eta>0$ such that $\varphi_\varepsilon-w_\varepsilon\geq \eta>0$ on $\partial B_\delta(x_\varepsilon)$. By Proposition \ref{Pro:S-env}, $w_\varepsilon$ satisfies, in the viscosity sense,
    \begin{align}\label{eq:4.5w_eps}
        \omega(x)\mathcal{M}^+(D^2w_\varepsilon)\geq (f_1^\pm)_\varepsilon-(f_2^\pm)_\varepsilon, \quad \mbox{ in } B_\rho^\pm,
    \end{align}
    for some $0<\rho<1$ such that $\overline{B_\delta(x_\varepsilon)}\subset B_\rho$.

    Now we note that $D^2\varphi=D^2_{x'}P-C(2-a)(1-a) |x_d|^{-a}e_d\otimes e_d$ and recalling that $\omega(x)=|x_d|^{a}$ we get
\begin{equation}\label{eq:4.6phi}
    \begin{aligned}
        |x_d|^{a}\mathcal{M}^+(D^2\varphi_\varepsilon)\leq &\,|x_d|^{a}\left(\|D^2_{x'}P\|+2\Lambda\right)-C(2-a)(1-a)\\
        < &\inf_{B_\rho^\pm}\left[(f_1^\pm)_\varepsilon-(f_2^\pm)_\varepsilon\right], \quad \mbox{ in } B_\rho^\pm,
    \end{aligned}
\end{equation}
    provided we take $C$ large enough. Note that this immediately implies that $x_\varepsilon\in T$ since $\varphi_\varepsilon$ touches $w_\varepsilon$ by above at $x_\varepsilon$ and if $x_\varepsilon$ were in $B_\rho^\pm$ then this inequality would contradict \eqref{eq:4.5w_eps}, by the definition of $L^p$ viscosity solution.

    Define
    \begin{align}\label{eq:4.6psi}
        \psi=\varphi_\varepsilon-w_\varepsilon-\eta/2,
    \end{align}
    with $\psi\geq \eta/2>0$ on $\partial B_\delta(x_\varepsilon)$ and $\psi(x_\varepsilon)=-\eta/2<0$. Let $\mathcal{C}_\psi$ be the convex envelope of $-\psi_-$ in $B_{2\delta}'(x_\varepsilon)$, where we have extended $-\psi_-\equiv 0$ outside of $\overline{B_{\delta}'(x_\varepsilon)}$. By Lemma \ref{Lm:prop_env} \textit{(3)}, we know that $\psi\in C^{1,1}_{x'}$ by above in $B_\rho$ (since the bad term $-|x_d|^{2-a}$ in the definition of $\psi$ is non-positive). Hence, for any $x_0'\in \overline{B_\delta'(x_\varepsilon)}$, there exists a paraboloid $P(x')$ with uniform opening that touches $\psi(x',0)$ by above at $x_0'$. We have $\mathcal{C}_\psi\in C_{x'}^{1,1}(\overline{B_\delta'(x_\varepsilon)})$ and for any $t>0$, we claim that
    \[
    |D_t|:=|\{x'\in \overline{B_\delta'(x_\varepsilon)} \, : \, \mathcal{C}_\psi(x')=\psi(x',0) \mbox{ and } |D_{x'}\mathcal{C}_\psi(x')|\leq t\}|>0.
    \]
    Indeed, the fact that the set of contact points $\{\psi=\mathcal{C}_\psi\}$ in $\overline{B'_\delta(x_\varepsilon)}$ has positive measure follows from the Alexandroff Lemma (see \cite[Lemma 3.5]{CC}) which implies that there exists $A\subset B'_\delta(x_\varepsilon)$ such that $|B'_\delta(x_\varepsilon)\setminus A|=0$ and
    \[
    0<\eta/2= \sup_{B'_\delta(x_\varepsilon)}\psi_-\leq \left(\int_{A\cap \{\psi=\mathcal{C}_\psi\}}\det D^2\mathcal{C}_\psi\,dx\right)^\frac{1}{d}.
    \]
    Since $x_\varepsilon$ is a minimum of $\psi$ we have  
 $\mathcal{C}_\psi(x'_\varepsilon)=\psi(x'_\varepsilon,0)$ and $D_{x'}\mathcal{C}_\psi(x'_\varepsilon)=0$, thus $x_\varepsilon\in D_t\neq \emptyset$, for any $t>0$. Since the gradient of $\mathcal{C}_\psi$ is continuous, the claim follows. Hence, choosing $t\leq \eta/(4\delta)$, there exists $y_\varepsilon'\in D_t$ such that both $u^\varepsilon$ and $v_\varepsilon$ are punctually second order differentiable at $y_\varepsilon=(y_\varepsilon',0)$ in the $x'$-direction and such that
 \[
 \ell(x')=D_{x'}\mathcal{C}_\psi(y'_\varepsilon)\cdot (x'-y_\varepsilon')+\psi(y_\varepsilon)
 \]
 touches $\psi$ from below at $y_\varepsilon$ on $\overline{B_\delta(x_\varepsilon)}$. Furthermore by \eqref{eq:4.6psi} we get $w_\varepsilon\leq \varphi_\varepsilon-\ell-\eta/2$ on $\partial B_\delta^\pm(x_\varepsilon)$ and by \eqref{eq:4.5w_eps} and \eqref{eq:4.6phi} we get
 \[
 \omega(x)\mathcal{M}^+(D^2w_\varepsilon)>\omega(x)\mathcal{M}^+(D^2(\varphi_\varepsilon-\ell-\eta/2)),\quad \mbox{ in } B_\delta^\pm(x_\varepsilon).
 \]
 Therefore by the comparison principle which follows from \cite[Theorem 4]{JS} applied to each part of $B^\pm_\delta(x_\varepsilon)$, we get that $w_\varepsilon\leq \varphi_\varepsilon-\ell-\eta/2$ on $\overline{B_\delta(x_\varepsilon)}$. Let now
 \[
 \bar \varphi = \varphi_\varepsilon-\ell-\eta/2.
 \]
 Consider the viscosity solutions $\bar u^\varepsilon$ and $\bar v_\varepsilon$ to the Dirichlet problems
 \begin{align*}
     \begin{cases}
         \omega(x) F^\pm(D^2\bar u^\varepsilon)=\left(f^\pm_1\right)_\varepsilon &\mbox{ in } B_\delta^\pm(x_\varepsilon),\\
         \bar u^\varepsilon=u^\varepsilon & \mbox{ on } \partial B_\delta^\pm(x_\varepsilon).
     \end{cases}
 \end{align*}
 and
  \begin{align*}
     \begin{cases}
         \omega(x) F^\pm(D^2\bar v_\varepsilon)=\left(f^\pm_2\right)_\varepsilon &\mbox{ in } B_\delta^\pm(x_\varepsilon),\\
         \bar v_\varepsilon=v_\varepsilon & \mbox{ on } \partial B_\delta^\pm(x_\varepsilon).
     \end{cases}
 \end{align*}
 By applying again the comparison principle, $\bar u^\varepsilon\geq u^\varepsilon$ and $\bar v_\varepsilon\leq v_\varepsilon$ in $B_\delta(x_\varepsilon)$, and thus
\begin{align}\label{eq:4.8uv}
     \left(\bar u^\varepsilon\right)^+_{x_n}-\left(\bar u^\varepsilon\right)^-_{x_n}\geq \left( g_1\right)_\varepsilon \quad \mbox{ and }\quad \left(\bar v_\varepsilon\right)^+_{x_n}-\left(\bar v_\varepsilon\right)^-_{x_n}\leq \left( g_2\right)_\varepsilon
\end{align}
 on $B_\delta(x_\varepsilon)\cap \{x_d=0\}$, in the viscosity sense, where 
 \[
  \left( g_1\right)_\varepsilon=g_1-\gamma_{g_1}\left((2\varepsilon\|u\|_{L^\infty(B_1)})^{1/2}\right)
 \]
 and
 \[
 \left( g_2\right)_\varepsilon=g_2-\gamma_{g_2}\left((2\varepsilon\|v\|_{L^\infty(B_1)})^{1/2}\right).
 \]
 By Lemma \ref{Lm:prop_env} \textit{(3)}, in particular we have $u^\varepsilon, v_\varepsilon\in C^{1,\alpha}_{x'}(y_\varepsilon)$. We are now in conditions to apply Corollary \ref{Cor:MaWang}, since $f:=\left(f^\pm_i\right)_\varepsilon\omega^{-1}$ satisfies condition \eqref{eq:MaWang_f} for $\alpha=1-a$, and thus we get pointwise $C^{1,\alpha}$ estimates (depending on $\varepsilon$), which imply the existence of $r_0>0$ and linear polynomials $\ell_u^\pm$ and $\ell_v^\pm$ such that
 \begin{equation*}
     \begin{aligned}
         \|(\bar u^ \varepsilon)^\pm-\ell_u^\pm\|_{L^\infty(B^\pm_r(y_\varepsilon))}\leq Cr^{1+\alpha}\\
         \|\bar v_ \varepsilon^\pm-\ell_v^\pm\|_{L^\infty(B^\pm_r(y_\varepsilon))}\leq Cr^{1+\alpha}
     \end{aligned}\quad  \mbox{ for all } 0<r<r_0. 
 \end{equation*}
To simplify notation, call $p_u^\pm=D\ell_u^\pm\cdot e_d$ and $p_v^\pm=D\ell_v^\pm\cdot e_d$. We can now argue similarly as in \cite[Lemma 4.3]{DFS2018}, noting that the source term is allowed to be unbounded. We thus get that \eqref{eq:4.8uv} holds pointwise, that is,
\begin{align}\label{eq:4.9puv}
    p_u^+-p_u^-\geq (g_1)_ \varepsilon(y_\varepsilon) \quad \mbox{ and } \quad p_v^+-p_v^-\leq (g_2)_\varepsilon(y_\varepsilon).
\end{align}
Let $\bar w_\varepsilon=\bar u_\varepsilon-\bar v_\varepsilon$. Then, by the previous computations, we have
\begin{align*}
    \begin{cases}
        \omega(x)\mathcal{M}^+(D^2\bar w_\varepsilon)\geq \omega(x)\mathcal{M}^+(D^2\bar \varphi) & \mbox{ in } B_\delta^\pm(x_\varepsilon)\\
        \bar w_\varepsilon\leq \bar \varphi & \mbox{ on } \partial B_\delta^\pm(x_\varepsilon).
    \end{cases}
\end{align*}
It follows that $\bar w_\varepsilon\leq \bar \varphi$ in $B_\delta(x_\varepsilon)$ and $\bar w_\varepsilon(y_\varepsilon)=\bar\varphi(y_\varepsilon)$. Since $\bar w_\varepsilon\in C^{1,\alpha}(y_\varepsilon)$, we have that
\begin{align*}
    &p^++\tau =\bar \varphi^+_{x_n}(y_\varepsilon)\geq \left(\bar w_\varepsilon\right)_{x_d}^+(y_\varepsilon)=p_u^+-p_v^+,\\
    &p^--\tau =\bar \varphi^-_{x_n}(y_\varepsilon)\leq \left(\bar w_\varepsilon\right)_{x_d}^-(y_\varepsilon)=p_u^--p_v^-.
\end{align*}
Therefore, combining the previous estimates with \eqref{eq:4.9puv}, yields
\begin{align*}
    p^+-p^-+2\tau \geq&\, (g_1)_\varepsilon(y_\varepsilon)-(g_2)_\varepsilon(y_\varepsilon)\\
    =&\,(g_1-g_2)(y_\varepsilon)+\gamma_{g_1}\left((2\varepsilon\|u\|_\infty)^{1/2}\right)-\gamma_{g_2}\left((2\varepsilon\|v\|_\infty)^{1/2}\right).
\end{align*}
We emphasize that although $p^\pm_u$ and $p^\pm_v$ depend on $\varepsilon$ and might explode as $\varepsilon\to 0$, they were only used in an intermediate step, and the final estimate we got above is stable under this limit. Thus, recalling that $y_\varepsilon\in B_\delta(x_\varepsilon)$ and $x_\varepsilon\to x_0$ as $\varepsilon\to 0$, we can start by letting $\tau \to 0$, then $\delta \to 0$, to get $y_\varepsilon\to x_\varepsilon$, and finally $\varepsilon\to 0$, and obtain the desired \eqref{eq:4.4aim}.
    
\end{proof}
These two results follow from  Theorem \ref{Thm:S-dif} and the ABP estimate(Theorem \ref{Thm:deg_ABP}).

\begin{Corollary}[Uniqueness]\label{Cor:unique}
    There exists a unique viscosity solution to 
    \begin{align*}
        \begin{cases}
        \omega(x)F^\pm(D^2u)=f^\pm &\mbox{ in } B_1^\pm\\
        u_{x_d}^+-u_{x_d}^-=g &\mbox{ on } T=B_1\cap\{x_d=0\}\\
        u=h &\mbox{ on } \partial B_1.
    \end{cases}
    \end{align*}
\end{Corollary}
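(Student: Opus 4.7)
The plan is to deduce uniqueness as a two-line consequence of Theorem \ref{Thm:S-dif} combined with the degenerate ABP estimate (Theorem \ref{Thm:deg_ABP}); the deeper work has already been done in proving that the difference of a sub- and a supersolution belongs to the Pucci class, \emph{including} the correct transmission inequality. Existence, which is the other half of the statement, is then obtained by an approximation/Perron argument outlined at the end.

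\textbf{Uniqueness.} Suppose $u_1,u_2$ are two viscosity solutions to the Dirichlet problem with the same data $(f^\pm,g,h)$, and set $w:=u_1-u_2$. Apply Theorem \ref{Thm:S-dif} with $u=u_1$ viewed as a subsolution ($f_1=f$, $g_1=g$) and $v=u_2$ as a supersolution ($f_2=f$, $g_2=g$). This yields
\[
\omega(x)\mathcal{M}^+(D^2 w)\geq 0\ \mbox{ in } B_1^\pm,\qquad w_{x_d}^+-w_{x_d}^-\geq 0\ \mbox{ on } T.
\]
Swapping the roles ($u=u_2$ sub, $v=u_1$ super), the same theorem gives the analogous property for $-w$. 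Rewriting this in terms of $w$ via $\mathcal{M}^-(M)=-\mathcal{M}^+(-M)$ yields
\[
\omega(x)\mathcal{M}^-(D^2 w)\leq 0\ \mbox{ in } B_1^\pm,\qquad w_{x_d}^+-w_{x_d}^-\leq 0\ \mbox{ on } T.
\]

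\textbf{Applying ABP.} The second set of inequalities places $w$ in $\overline{S}_\omega(0)$ with transmission datum $g\equiv 0$, and on $\partial B_1$ we have $w=0$. Theorem \ref{Thm:deg_ABP} therefore gives $\sup_{B_1}w_-\leq 0$, i.e.\ $w\geq 0$ in $B_1$. Applying the same argument to $-w$ (which, by the first set of inequalities together with the identity $\mathcal{M}^+(D^2 w)\geq 0 \Leftrightarrow \mathcal{M}^-(D^2(-w))\leq 0$, lies in $\overline{S}_\omega(0)$ with the compatible transmission sign) yields $\sup_{B_1}(-w)_-\leq 0$, hence $w\leq 0$. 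Combined, $w\equiv 0$, so $u_1=u_2$.

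\textbf{Existence.} This is not the delicate part; the natural route is a regularization argument. Replace the weight $\omega(x)=|x_d|^a$ by the non-degenerate $\omega_\varepsilon(x)=(|x_d|^2+\varepsilon^2)^{a/2}$, so that $\omega_\varepsilon F^\pm$ is uniformly elliptic in $B_1^\pm$. For this problem, existence of a viscosity solution $u_\varepsilon$ with the prescribed Dirichlet and transmission data follows from the uniformly elliptic theory of \cite{SS} (via Perron's method). The uniform $L^\infty$ bound from the ABP (Theorem \ref{Thm:deg_ABP}, applied to $u_\varepsilon$ with $f^\pm$ replaced by $f^\pm\omega/\omega_\varepsilon$, which is uniformly bounded in $L^d_\omega$) together with the global H\"older estimate of Proposition \ref{Pro:Holder_global} provide uniform compactness. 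Passing to the limit $\varepsilon\to 0$ along a subsequence and invoking stability of viscosity solutions under uniform convergence (as in the stability arguments used throughout \cite{JS}) produces a viscosity solution $u$ to the original degenerate problem, which by the uniqueness part just proven is the unique such solution.

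\textbf{Main obstacle.} The only genuine subtlety is a bookkeeping one: keeping straight the sign conventions between the classes $\underline{S}_\omega$ and $\overline{S}_\omega$ and the two possible orientations of the transmission inequality when passing from $w$ to $-w$, so that the hypothesis of Theorem \ref{Thm:deg_ABP} is exactly met in both directions. Once this is done the corollary is immediate.
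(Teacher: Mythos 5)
Your uniqueness argument is exactly the paper's: the authors state immediately before the corollary that it follows from Theorem \ref{Thm:S-dif} together with the ABP estimate (Theorem \ref{Thm:deg_ABP}), which is precisely your two applications of Theorem \ref{Thm:S-dif} (with the roles of the two solutions swapped) followed by ABP in both directions, and your sign bookkeeping between $\underline{S}_\omega$, $\overline{S}_\omega$ and the two orientations of the transmission inequality is correct. The only divergence is the existence half: the paper does not prove existence inside this corollary at all, but defers it to a separate theorem obtained by the Perron-type argument of \cite[Theorem 4.11]{SS}, whereas you sketch a regularization of the weight $\omega_\varepsilon=(|x_d|^2+\varepsilon^2)^{a/2}$ plus compactness and stability. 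Your route is plausible (the uniform bounds you invoke do hold, since $\omega_\varepsilon\geq\omega$), but it is only a sketch — in particular the stability of the transmission condition under the limit $\varepsilon\to 0$ would need an argument along the lines of Lemma \ref{Lm:stability} adapted to varying weights — and it is not how the paper proceeds; the Perron approach has the advantage of reusing the comparison principle you have just established rather than requiring a new stability lemma.
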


\begin{Theorem}[Comparison]\label{Thm:comparison}
    Let $u,v \in C(\overline{B_1})$ be bounded viscosity sub and supersolutions of \eqref{eq:flat}, respectively. If $u \leq v$ on $\p B_1$ then $u\leq v$ in $B_1$. 
\end{Theorem}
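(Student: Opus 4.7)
The plan is to combine the two main tools already developed in this section. Specifically, I would set $w := u - v$ and first apply Theorem \ref{Thm:S-dif} (with $f_1^\pm = f_2^\pm = f^\pm$ and $g_1 = g_2 = g$) to conclude that $w$ satisfies, in the viscosity sense,
\begin{align*}
    \begin{cases}
        \omega(x)\mathcal{M}^+(D^2 w) \geq 0, & \text{in } B_1^\pm,\\
        w_{x_d}^+ - w_{x_d}^- \geq 0, & \text{on } T.
    \end{cases}
\end{align*}
This is the crucial reduction: the difference lies in the subsolution Pucci class $\underline{S}_\omega(0)$ with a one-sided transmission condition, even though $u$ and $v$ individually solve a fully nonlinear equation with nonzero right-hand side.

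Next I would pass to $\tilde w := -w$. Using the standard identity $\mathcal{M}^-(-A) = -\mathcal{M}^+(A)$ for Pucci operators, $\tilde w$ satisfies $\omega(x)\mathcal{M}^-(D^2 \tilde w) \leq 0$ in $B_1^\pm$, so $\tilde w \in \overline{S}_\omega(0)$, and the transmission condition reverses to $\tilde w_{x_d}^+ - \tilde w_{x_d}^- \leq 0$ on $T$.

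At this point I would apply the degenerate ABP (Theorem \ref{Thm:deg_ABP}) to $\tilde w$ on $B_1$, noting that the flat interface $\{x_d = 0\}$ is covered by taking $\Psi \equiv 0$, which trivially belongs to $C^{1,\alpha_0}(\overline{B_1'})$. With $g \equiv 0$ and $f^\pm \equiv 0$ in the ABP, this yields
\[
\sup_{B_1} \tilde w_- \leq \sup_{\partial B_1} \tilde w_-.
\]
Since $\tilde w_- = w_+$, and $u \leq v$ on $\partial B_1$ forces $w_+ \equiv 0$ on $\partial B_1$, we obtain $w \leq 0$ in $B_1$, i.e.\ $u \leq v$, as desired.

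There is no real obstacle here once Theorem \ref{Thm:S-dif} is available; the only point requiring care is checking that $\tilde w$ genuinely belongs to $\overline{S}_\omega(0)$ in the viscosity sense (a direct consequence of the Pucci symmetry together with Theorem \ref{Thm:S-dif}) and that the hypotheses of Theorem \ref{Thm:deg_ABP} are met by $\tilde w$ on both sides of the flat interface. Everything else is mechanical.
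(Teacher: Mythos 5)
Your argument is correct and is exactly the route the paper takes: it states that the comparison principle "follows from Theorem \ref{Thm:S-dif} and the ABP estimate (Theorem \ref{Thm:deg_ABP})", and your proposal supplies precisely the intended details (passing to $w=u-v$, flipping the sign to land in $\overline{S}_\omega(0)$ with the transmission inequality $\leq 0$, and invoking the ABP with $f^\pm\equiv 0$, $g\equiv 0$, $\Psi\equiv 0$). No gaps.
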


The existence of solutions for \eqref{eq:flat}  follows the from same argument as in \cite[Theorem 4.11]{SS}. 

\begin{Theorem}[Existence]
    Let $f^\pm \in C(B_1^\pm \cup T) \cap L^\infty(B_1)$, $ g\in C(T)$, and $\phi\in C(\p B_1)$. Then there exists a unique viscosity solution $u \in C(\overline{B_1})$ of \eqref{eq:flat} such that $u= \phi$ in $\p B_1$.
 \end{Theorem}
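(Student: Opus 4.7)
The plan is to apply Perron's method. Define
\[
\mathcal{F} = \{ v \in USC(\overline{B_1}) : v \text{ is a bounded viscosity subsolution of \eqref{eq:flat} with } v \leq \phi \text{ on } \partial B_1\},
\]
and set $u(x) := \sup_{v \in \mathcal{F}} v(x)$. First I would build global sub/super-barriers of the form
\[
\psi^{\mp}(x) = \mp M \pm C\, x_d^{+} \pm K\, |x_d|^{2-a}.
\]
The key observation is that the term $K|x_d|^{2-a}$ contributes a rank-one Hessian $K(2-a)(1-a)|x_d|^{-a}\,e_d\otimes e_d$; after multiplication by $\omega(x)=|x_d|^a$ this produces the bounded constant $\pm\lambda K(2-a)(1-a)$, which for $K$ large dominates $\|f^{\pm}\|_\infty$ and compensates the vanishing ellipticity on $T$. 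The term $\pm C\,x_d^{+}$ creates at $T$ a normal-derivative jump of $\pm C$, compatible with $g$ when $C \geq \|g\|_\infty$. Choosing $M$ large places $\psi^{-}$ below $\phi$ and $\psi^{+}$ above $\phi$ on $\partial B_1$. Hence $\psi^{-}\in\mathcal{F}$, and by Theorem \ref{Thm:comparison} every $v\in\mathcal{F}$ satisfies $v\leq \psi^{+}$: the family is non-empty and uniformly bounded.

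The second step is the classical Perron closure, showing that $u^{*}$ is a subsolution and $u_{*}$ a supersolution of \eqref{eq:flat}. Inside $B_1^{\pm}$ this is standard. For the transmission condition I would use the equivalent formulation (ii') with test functions $\varphi = P(x') + p^{+}x_d^{+} - p^{-} x_d^{-}$, optionally augmented by a $C|x_d|^{2-a}$ term as in the lemma following Definition \ref{Def:visc-flat}; the usual bumping argument creating a strict local maximum of $\varphi - v_n$ then transfers the transmission inequality from approximating subsolutions $v_n\in\mathcal{F}$ to $u^{*}$. Applying the comparison principle (Theorem \ref{Thm:comparison}) yields $u^{*}\leq u_{*}$ in $B_1$, while $u_{*}\leq u^{*}$ is automatic, so $u = u^{*} = u_{*}$ is continuous in $B_1$ and is the unique viscosity solution.

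To obtain continuity up to $\partial B_1$ with $u = \phi$, I would construct, for each $y_0 \in \partial B_1$ and each $\varepsilon>0$, localized barriers $\underline{\psi}_{\varepsilon}$ and $\overline{\psi}_{\varepsilon}$ with $\underline{\psi}_{\varepsilon}(y_0)\geq \phi(y_0)-\varepsilon$, $\overline{\psi}_{\varepsilon}(y_0)\leq \phi(y_0)+\varepsilon$, and $\underline{\psi}_{\varepsilon}\leq \phi \leq \overline{\psi}_{\varepsilon}$ on $\partial B_1$. Away from $T$ these are the standard paraboloid barriers built from the smoothness of $\partial B_1$; at points of $T\cap \partial B_1$ one adds the degeneracy and transmission corrections from the global barriers above, localized by a smooth cutoff. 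Sandwiching $u$ between $\underline{\psi}_{\varepsilon}$ and $\overline{\psi}_{\varepsilon}$ in $\mathcal{F}$ and its dual, and letting $\varepsilon \to 0$, gives $u\in C(\overline{B_1})$ with $u|_{\partial B_1} = \phi$.

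The main obstacle is the interplay of three effects that the barriers must simultaneously balance: the vanishing ellipticity on $T$ (handled by $|x_d|^{2-a}$), the gradient jump dictated by $g$ (handled by $x_d^{+}$), and the boundary datum $\phi$ on $\partial B_1$; this becomes especially delicate at points of $T\cap \partial B_1$. A closely related difficulty is verifying that Perron's closure respects the transmission condition, which is tested by non-$C^2$ functions. Both points are resolved by combining the test-function class of Definition \ref{Def:visc-flat}(ii') with the $|x_d|^{2-a}$ correction used in the proof of Theorem \ref{Thm:S-dif}.
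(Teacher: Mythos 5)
Your proposal is correct and follows essentially the same route as the paper, which simply invokes the Perron argument of Soria-Carro--Stinga (their Theorem 4.11) together with the comparison principle already established here. Your adaptation of the barriers via the $|x_d|^{2-a}$ correction (whose Hessian cancels the degenerate weight $\omega=|x_d|^a$) is exactly the device the paper itself uses in the test-function lemma after Definition \ref{Def:visc-flat} and in Theorem \ref{Thm:S-dif}, so the degenerate case is handled in the intended way.
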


\section{A stability result for $C^2$ interfaces} \label{Section:approximation}
In this section we prove that the equation is stable under small perturbations. The proof differs substantially from the uniformly elliptic case. This difficulty stems from the fact that as the interface changes, so does the degeneracy set, and thus using a correct $W^{2,p}$ test function becomes essential to overcome the degeneracy of the equation.

\begin{Lemma} \label{Lm:stability}
    Assume that $\Gamma_k\in C^2$ and $u_k\in C(B_1)$ satisfy
    \begin{align*}
        \begin{cases}
           |x_d - \Psi_k(x') |^{a_k(x)} F_k^\pm(D^2u_k)=f_k^\pm & \mbox{ in } \Omega_k^\pm\\
       (u_k^+)_\nu-(u_k^-)_\nu=g_k & \mbox{ on } \Gamma_k
        \end{cases}
    \end{align*}
    where $F_k^\pm$ satisfy assumption \ref{Assumption1}-\ref{Assumption3} with $\Gamma_k=B_1\cap \{x_d=\Psi_k(x')\}$ for $\Psi_k\in C^2$,  $\omega_k(x)=|x_d-\Psi_k(x')|^{a_k(x)}$, $f_k^\pm\in C(\Omega_k^\pm\cup \Gamma_k)$ and $g_k\in C(\Gamma_k)$, for $k\geq 1$. Suppose that there are continuous functions $u, f^\pm$ and $g$, a constant $a$, and elliptic operators $F^\pm\in \mathcal{E}(\lambda,\Lambda)$ such that, as $k\to \infty$, we have
    \begin{enumerate}
        \item $ F_k^\pm\to F^\pm$ uniformly on compact subsets of $S(d)$ 
        \item $u_k\to u$ uniformly on compact subsets of $B_1$;
        \item $\|f_k^\pm-f^\pm\|_{L^\infty(\Omega_k)}\to 0$;
        \item $\|g_k-g\|_{L^\infty(\Gamma_k)}=\sup_{x'\in B_1'}|g_k(x',\Psi_k(x'))-g(x',0)|\to 0$;
        \item $\Gamma_k\to T$ in $C^2$ in the sense that $\|\Psi_k\|_{C^2(B_1')}\to 0$;
        \item $a_k\to a$ uniformly on $B_1$.  
    \end{enumerate}
    Then $u\in C(B_1)$ is a viscosity solution to
    \begin{align*}
        \begin{cases}
            |x_d|^{a}F^\pm(D^2u)=f^\pm & \mbox{ in } B_1^\pm\\
            u_{x_d}^+-u_{x_d}^-=g &\mbox{ on } T.
        \end{cases}
    \end{align*}
\end{Lemma}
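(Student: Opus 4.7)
The plan is to verify that $u$ is both a viscosity subsolution and supersolution of the limiting flat problem; by symmetry I only describe the subsolution case, which splits into a routine interior stability on $B_1^\pm$ and a more delicate transmission stability on $T$. For the interior, I would fix a $C^2$ test function $\varphi$ touching $u$ strictly from above at some $x_0\in B_1^+$. Since $\Psi_k\to 0$ uniformly, $\overline{B_\delta(x_0)}\subset\Omega_k^+$ for large $k$, and the strict touching combined with $u_k\to u$ uniformly yields maximizers $x_k\to x_0$ of $u_k-\varphi$. Passing to the limit in
\[
|x_{k,d}-\Psi_k(x_k')|^{a_k(x_k)} F_k^+(D^2\varphi(x_k))\ge f_k^+(x_k)
\]
using hypotheses $(1),(3),(5),(6)$ gives the required inequality $|x_{0,d}|^{a}F^+(D^2\varphi(x_0))\ge f^+(x_0)$.

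For the interface I would use the equivalent characterization of viscosity sub/supersolutions of the flat limit problem via $W^{2,p}$ test functions established earlier in Section~\ref{Section:flat}: take
\[
\psi(x)=P(x')+p^+x_d^+-p^-x_d^- + \bar C|x_d|^{2-a}
\]
touching $u$ strictly from above at $x_0\in T$, and show either the esslimsup condition for $\psi$ at $x_0$ or $p^+-p^-\ge g(x_0)$. The crucial step is to lift $\psi$ to a test function adapted to $\Gamma_k$ through
\[
\Phi_k(x)=P(x')+p^+(x_d-\Psi_k(x'))^+-p^-(x_d-\Psi_k(x'))^- + \bar C|x_d-\Psi_k(x')|^{2-a_k(x_0)},
\]
which is $W^{2,p}$ globally and $C^2$ on each of $\Omega_k^\pm$. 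From $\Psi_k\to 0$ in $C^2$ and $a_k(x_0)\to a$ one has $\Phi_k\to\psi$ uniformly on $\overline{B_\delta(x_0)}$. Strict touching combined with $u_k\to u$ uniformly then produces minimizers $x_k\to x_0$ of $\Phi_k-u_k$, so that $\Phi_k-m_k$ (with $m_k\to 0$) touches $u_k$ from above at $x_k$.

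The conclusion is drawn from a case split on the location of $x_k$. If $x_k\in\Gamma_k$ along a subsequence, $\Phi_k$ is a valid $C^1$-piecewise test function; the correction term has vanishing normal derivative on $\Gamma_k$ because $1-a_k(x_0)>0$, so the transmission inequality for $u_k$ collapses to $(p^+-p^-)\sqrt{1+|\nabla'\Psi_k(x_k')|^2}\ge g_k(x_k)$, and the limit using $\Psi_k\to 0$ in $C^2$ and $g_k(x_k)\to g(x_0)$ gives $p^+-p^-\ge g(x_0)$. Otherwise $x_k\in\Omega_k^\pm$ for all large $k$, $\Phi_k$ is $C^2$ at $x_k$, and the viscosity equation for $u_k$ gives
\[
|x_{k,d}-\Psi_k(x_k')|^{a_k(x_k)} F_k^\pm(D^2\Phi_k(x_k))\ge f_k^\pm(x_k).
\]
A direct computation writes $D^2\Phi_k$ as a bounded part involving $D^2_{x'}P$ and $\|D^2\Psi_k\|$ plus the singular rank-one matrix $\bar C(2-a_k(x_0))(1-a_k(x_0))|x_d-\Psi_k|^{-a_k(x_0)}v_kv_k^T$ with $v_k\to e_d$. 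Using $F_k^\pm\to F^\pm$ uniformly on compacts and $a_k\to a$, passing to the limit exhibits a sequence of points at which $|x_d|^{a}F^\pm(D^2\psi(x))-f^\pm(x)$ is non-negative up to $o(1)$, realizing the esslimsup condition.

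The main obstacle lies in this last passage to the limit. The weight exponent $a_k(x_k)$ in the equation and the correction exponent $a_k(x_0)$ do not coincide, while the base $|x_{k,d}-\Psi_k(x_k')|$ tends to zero; controlling the factor $|x_{k,d}-\Psi_k(x_k')|^{a_k(x_k)-a_k(x_0)}$ and proving it converges to $1$ requires the logarithmic modulus-of-continuity condition on $a_k$ inherited from Assumption~\ref{Assumption2} (used through the uniform bound $|a_k(x_k)-a_k(x_0)|\cdot|\log|x_{k,d}-\Psi_k(x_k')||\to 0$). Once this delicate balance is secured, the Hessian blow-up of order $|x_d-\Psi_k|^{-a_k(x_0)}$ exactly compensates the degeneracy of the weight and yields a valid limiting inequality. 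The supersolution case follows symmetrically via the lower-envelope test functions from Section~\ref{Section:flat}.
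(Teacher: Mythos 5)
Your interior argument and your ``Case A'' (touching points falling on $\Gamma_k$) are sound and essentially coincide with the paper's. The genuine gap is in ``Case B'', where the touching points $x_k$ lie in $\Omega_k^\pm$ for all large $k$ and you claim that passing to the limit in $|x_{k,d}-\Psi_k(x_k')|^{a_k(x_k)}F_k^\pm(D^2\Phi_k(x_k))\ge f_k^\pm(x_k)$ ``realizes the esslimsup condition'' for $\psi$. This step does not close, for three reasons. First, $F_k^\pm\to F^\pm$ uniformly only on compact subsets of $S(d)$, while $D^2\Phi_k(x_k)$ is unbounded; the only $k$-independent information you can extract is the Pucci bound $F_k^\pm\le\mathcal{M}^+$. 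Second, even granting that, the limit yields only a one-sided Pucci inequality, e.g.\ $\lambda\bar C(2-a)(1-a)\ge f^\pm(x_0)$ when $\bar C<0$; but $\esslimsup_{x\to x_0}\left(|x_d|^{a}F^\pm(D^2\psi(x))-f^\pm(x)\right)$ can be as small as $\Lambda\bar C(2-a)(1-a)-f^\pm(x_0)$, since $F^\pm$ evaluated at a matrix with one unboundedly negative eigenvalue is pinned down only within the Pucci interval. Hence Case B occurring does not, by itself, force the esslimsup alternative, and your dichotomy does not align with the either--or you must prove. Third, the convergence $|x_{k,d}-\Psi_k(x_k')|^{a_k(x_k)-a_k(x_0)}\to1$ you invoke does not follow from the log-Dini condition in \ref{Assumption2}: that condition controls $\gamma_{a}(|x_k-x_0|)\ln(1/|x_k-x_0|)$, whereas $|x_{k,d}-\Psi_k(x_k')|$ may be exponentially smaller than $|x_k-x_0|$, so the product $|a_k(x_k)-a_k(x_0)|\cdot\left|\ln|x_{k,d}-\Psi_k(x_k')|\right|$ is not controlled.

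The paper avoids all of this by not attempting to verify the $W^{2,p}$ characterization for an arbitrary coefficient $\bar C$. It argues by contradiction from the piecewise-linear formulation: assuming $p^+-p^-<g(x_0)$, it perturbs the test function by $\eta|x_d-\Psi_k(x')|-L|x_d-\Psi_k(x')|^{2-(a+\varepsilon_k)}$ with $\eta$ small, $L$ large, and $\varepsilon_k\downarrow0$ chosen so that $a_k(x)\le a+\varepsilon_k$ on $B_{r_k}(x_k)$. The definite sign of the exponent difference gives the one-sided bound $|x_d-\Psi_k(x')|^{a_k(x)-(a+\varepsilon_k)}\ge1$ (no convergence to $1$ is needed), so the Pucci upper bound becomes $\le C_1\omega_k|D^2_{x'}P|-C_2L+\|f_k^\pm\|_{L^\infty}<0$ for $L$ large, making interior touching outright impossible. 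Only your Case A survives, and the transmission inequality on $\Gamma_k$ together with $\eta\to0$ yields the contradiction. You should restructure the interface part of your argument along these lines; as written, Case B cannot be completed.
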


\begin{proof}    We only prove that $u$ is a viscosity subsolution since the other case follows similarly.  First, we show that
\[ |x_d|^{a}F^\pm(D^2u) \geq f^\pm  \quad \mbox{ in } B_1^{\pm}. \]
If, by contradiction, we suppose that it fails then there exists $x_0 \in B_1^{\pm}$ and a test function  $\varphi \in C^2(B_{\delta}(x_0))$ such that $\varphi$ touches $u$ from above at $x_0$, and
\[  |(x_0)_d|^{a}F^\pm(D^2\varphi(x_0)) < f(x_0)^\pm. 
\]
Without losing generality, we can assume that $x_0 \in B_1^+$ and that $\varphi$ touches $u$ strictly from above at $x_0$, up to considering $\varphi + \varepsilon |x-x_0|^{2}$ instead of $\varphi$, with $\varepsilon$ small. Now, since $u_k \to u$ uniformly on compact sets, there exists $\varepsilon_k >0$ such that $\varphi + \varepsilon_k \geq u_k$ in $\overline{ B_r(x_0)}$  for $k$ large and $r \leq \delta$ small.
We can consider $r$ small such that $\overline{ B_{r}(x_0)} \subset \Omega_k^{+}$, for some $k$ large enough, having $\Gamma_k \to T$.

Then, we define
\[ 
d_k= \inf_{B_{r_k}(x_0)} (\varphi + \varepsilon_k - u_k) \geq 0
\]
with $0 < r_k < r$  and $r_k \searrow 0$. Thus  $\overline{ B_{r_k}(x_0)} \subset \Omega_k^{+}$. 

Now, let $x_k \in \Omega_k^{+}$ be a point for which the infimum is attained, i.e., 
\[ d_k= \varphi(x_k)+ \varepsilon_k - u_k(x_k)  \]
and define $c_k= \varepsilon_k-d_k$, Then $x_k \to x_0, c_k \to 0$, and $\varphi + c_k $ touches $u_k$ from above at $x_k \in \Omega_k^+$, for $k$ large. Hence, since $|(x_k)_d - \Psi_k(x'_k)|^{a_k (x_k)}F_k^+(D^2 u_k(x_k)) \geq f_k^+$ in $\Omega_k^+$, we need to have
\[ |(x_k)_d - \Psi_k(x'_k)|^{a_k (x_k)} F_k^+(D^2 \varphi(x_k)) \geq f_k^+ (x_k).\]
Letting $k \to \infty$, we obtain
\[ |(x_0)_d|^{a}F^\pm(D^2\varphi(x_0)) \geq  f(x_0)^\pm. \]

    Now we prove the transmission condition. By contradiction, assume that there exists $x_0\in T$, $r>0$ small and $\varphi(x)=P(x')+p^+x_d^+-p^-x_d^-$ such that $\varphi$ touches $u$ from above at $x_0$ but 
    \begin{align}\label{eq:stab_contr}
        p^+-p^-<g(x_0).
    \end{align}
    Let $\psi(x)=\varphi(x)+\eta|x_d|-L|x_d|^{2-a}$, for $\eta, L>0$ to be fixed and take $\tau$ so small that $\eta |x_d|-L|x_d|^{2-a}\geq 0$ in $B_\tau(x_0)$. Then $\psi$ touches $u$ strictly from above at $x_0$ in $B_\tau(x_0)$. Arguing as before, there exist $c_k, r_k, x_k$ such that if we define
    \[
    \phi(x)=\psi(x',x_d-\Psi_k(x'))+c_k
    \]
then $\phi$ touches $u_k$ strictly from above at $x_k$ in $B_{r_k}(x_0)$, with $c_k\to 0$, $r_k\to 0$ and $x_k\to x_0$. Recall
\begin{align*}
    \phi(x)=P(x')+p^+(x_d-\Psi_k(x'))^+-p^-(x_d-\Psi_k(x'))^-\\
+\eta|x_d-\Psi_k(x')|-L|x_d-\Psi_k(x')|^{2-a}
\end{align*}
and note that
\begin{align*}
    D^2|x_d-\Psi_k(x')|^{2-a}=\begin{pmatrix}
        M' & p\\
        p^T & q
    \end{pmatrix}=:M
\end{align*}
with
\begin{align*}
    M'=&\,(2-a)(1-a)|x_d-\Psi_k(x')|^{-a}(D_{x'}\Psi_k(x')\otimes D_{x'}\Psi_k(x')) \\
    \mp&\,(1-a) |x_d-\Psi_k(x')|^{1-a} D^2_{x'}\Psi_k(x'),\\
    p=&\,-(2-a)(1-a)|x_d-\Psi_k(x')|^{-a}D_{x'}\Psi_k(x'),\\
    q=&\, (2-a)(1-a)|x_d-\Psi_k(x')|^{-a},
\end{align*}
here $\pm$ refers to whether $\pm(x_d-\Psi_k(x'))>0$. Since $\Psi_k\to 0$ in the $C^2$ norm, the leading term is $q$. Hence, choosing $k_0$ large enough, for $k\geq k_0$ we can assume that $M$ has a positive eigenvalue
\[
e\geq \frac{1}{2}(2-a)(1-a)|x_d-\Psi_k(x')|^{-a}.
\]

The above computation can be performed in the same manner if we consider, instead of $\phi$, a new family of test functions $\phi_k$ defined as
\begin{align*}
    \phi_k(x)=P(x')+p^+(x_d-\Psi_k(x'))^+-p^-(x_d-\Psi_k(x'))^-\\
+\eta|x_d-\Psi_k(x')|-L|x_d-\Psi_k(x')|^{2-(a+\varepsilon_k)}
\end{align*}
with $\varepsilon_k>0$ such that $a_k(x) \leq a + \varepsilon_k $ in $B_{r_k}(x_k)$, since $a_k(x) \to a$ uniformly as $\varepsilon_k \searrow 0$.

Now, we are presented with two options: either there exists $\bar k>0$ such that for every $k\geq \bar k$ we have $x_k\in \Omega_k^\pm$, in which case by the equivalence of  notions of $C$ and $L^p$ viscosity solutions, since $\phi_k$ touches $u_k$ by above at $x_k$, it must hold
\begin{align*}
    \esslimsup_{x\to x_k}\left(|x_d -\Psi_k(x')|^{a_k(x)}  F_k^\pm(D^2\phi_k)-f_k^\pm  \right) \geq 0;
\end{align*}
However, exploiting the ellipticity of $F_k$ and the explicit expression for $D^2\phi_k$, we have
\begin{align*}
&\esslimsup_{x\to x_k}\left(  |x_d -\Psi_k(x')|^{a_k(x)} F_k^\pm(D^2\phi_k)-f_k^\pm \right) \\
\leq \,&\esslimsup_{x\to x_k}\left( |x_d -\Psi_k(x')|^{a_k(x)} \mathcal{M}^+(D^2\phi_k) -f_k^\pm  \right)   \\ 
 \leq\,& \esslimsup_{x\to x_k}\left(|x_d-\Psi_k(x')|^{a_k(x)}C_1|D_{x'}^2P|-C_2 |x_d-\Psi_k(x')|^{a_k(x) - (a + \varepsilon_k) }L\right)+ \|f_k^\pm\|_{L^\infty}\\
 <\,&0
\end{align*}
provided we choose $L$ large enough, independent of $k$. This produces a contradiction.

The other possibility is that for every $\bar k\geq 1$ there exists $k\geq \bar k$ such that $x_k\in \Gamma_k$     and in this case we have
\begin{align*}
    \phi_{\nu_k}^+(x_k)-\phi_{\nu_k}^-(x_k)\geq g_k(x_0).
\end{align*}
By letting $\bar k\to \infty$ we obtain a contradiction with \eqref{eq:stab_contr}.  
\end{proof} 
\begin{Remark}
    Note that, up to this point, the results in this paper could be extended for operators with more general variable coefficients of the form $F^\pm(M,x)$ with the same assumptions as in \cite{JS}. The part in this paper where the argument would fail is in Lemma \ref{Lm:approxalpha0} as well as Proposition \ref{Pro:geom_ite}  since we could not say that if $v$ and $w$ are viscosity solutions of $F(D^2v,x)=f$, then the difference $v-w$ is in the class $S^*$ or $S$. Indeed, this is a very difficult problem in the theory of $C$-viscosity solutions.
\end{Remark}

\section{Approximation result for $C^{1,\alpha_0}$ interfaces} 
\label{Section:approxalpha0}
We can state the following approximation result, which relates our equation to the limiting profile where $g=0$ and $f^\pm=0$, for which we have $C^{1,\alpha_0}$ regularity even across the interface.

\begin{Lemma}[Approximation lemma]\label{Lm:approxalpha0}
Fix $0< \delta < 1, 0<\tau<1/4$ and let $u \in C(B_1)$ be a viscosity solution of 
\begin{align}\label{eq:mainmu}
    \begin{cases}
        |x_d-\Psi(x')|^{a(x)}F^\pm(D^2u)=f^\pm \quad &\mbox{ in } \Omega^{\pm}\\
        u^+_\nu-u^-_\nu=g &\mbox{ on } \Gamma
    \end{cases}
    \end{align}
    such that $\|u \|_{L^\infty(B_1)} \leq 1$, $[\Psi]_{C^{1,\alpha_0}(\Gamma)}\leq 1$ and
  \begin{equation}\label{eq:smallness}
\|g\|_{C^{\alpha_0}(\Gamma)}+\|f^-\|_{L^\infty(\Omega^-)}+\|f^+\|_{L^\infty(\Omega^+)}\leq \delta.
    \end{equation}
    Then there exists $v\in C^{1,\alpha_0^-}_{loc}(B_{3/4})\cap C^{0,\beta}(\overline{B_{3/4}})$ such that
    \[
    \|u-v\|_{L^\infty(B_{3/4-\tau})}\leq C(\tau^\beta+\delta),
    \]
    for  $\beta=\beta_0/2$ and for some $C>0$ depending only on $d, \lambda, \Lambda, \alpha_0$.
\end{Lemma}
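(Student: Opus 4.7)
The plan is to prove Lemma \ref{Lm:approxalpha0} by a compactness-contradiction argument, following the strategy of \cite{SS} adapted to the present degenerate setting. Suppose the statement fails for some choice of universal constant. Then, fixing $\tau_0 \in (0,1/4)$, one extracts sequences $u_k$, $f_k^\pm$, $g_k$, $\Psi_k$, $F_k^\pm$, $a_k$ satisfying the hypotheses with $\delta = \delta_k \to 0$, but such that no admissible $v$ verifies the required closeness inequality on $B_{3/4-\tau_0}$.

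The first step is compactness. The global H\"older estimate Proposition \ref{Pro:Holder_global} (equivalently Theorem \ref{Thm:int_Holder} combined with a covering) yields a uniform bound $\|u_k\|_{C^\beta(\overline{B_{3/4}})} \leq C$ with $\beta = \beta_0/2$, and Arzel\`a--Ascoli gives $u_k \to u_\infty$ uniformly on $\overline{B_{3/4}}$. The uniform $C^{1,\alpha_0}$-norm of $\Psi_k$, the uniform modulus of continuity of $a_k$ (Remark \ref{Rmk:mod_cont_exp}), and the uniform ellipticity of $F_k^\pm$ produce, up to subsequences, $\Psi_k \to \Psi_\infty$ in $C^1$, $a_k \to a_\infty$ uniformly, and $F_k^\pm \to F_\infty^\pm$ locally uniformly on $S(d)$. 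The smallness assumption \eqref{eq:smallness} forces $f_k^\pm \to 0$ and $g_k \to 0$.

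Next I would invoke the stability Lemma \ref{Lm:stability} to identify the limiting equation. Since that lemma is formulated for interfaces flattening to the hyperplane, the plan is first to perform a $C^{1,\alpha_0}$-diffeomorphism straightening $\Gamma_\infty$ (the graph of $\Psi_\infty$), verify that the induced perturbations of $F_\infty^\pm$ and of the weight $|x_d-\Psi_k(x')|^{a_k(x)}$ preserve Assumptions \ref{Assumption1}-\ref{Assumption3} uniformly in $k$, and then apply the stability result in the new coordinates. The outcome is that $u_\infty$ is a viscosity solution of
\begin{align*}
    \begin{cases}
        |x_d-\Psi_\infty(x')|^{a_\infty(x)} F_\infty^\pm(D^2 u_\infty) = 0 & \mbox{in } \Omega_\infty^\pm,\\
        (u_\infty)_\nu^+ - (u_\infty)_\nu^- = 0 & \mbox{on } \Gamma_\infty.
    \end{cases}
\end{align*}

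Off the interface this reduces to $F_\infty^\pm(D^2 u_\infty) = 0$, so the Caffarelli--Cabr\'e theory gives interior $C^{1,\alpha_0}$ regularity on each side of $\Gamma_\infty$. Across the $C^{1,\alpha_0}$ interface, Assumption \ref{Assumption3} together with the vanishing jump places one in the setting of \cite{SS}, producing $u_\infty \in C^{1,\alpha_0^-}_{loc}(B_{3/4})$, while the uniform $C^\beta$-bound passes to the limit to give $u_\infty \in C^{0,\beta}(\overline{B_{3/4}})$. Thus $v := u_\infty$ is an admissible candidate, and the uniform convergence $\|u_k - u_\infty\|_{L^\infty(B_{3/4-\tau_0})} \to 0$ contradicts the assumed lower bound $C_0 \tau_0^\beta$ once $k$ is large enough. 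The hard part will be the stability step, since Lemma \ref{Lm:stability} only handles a flat limit interface; this forces a preliminary flattening by a $C^{1,\alpha_0}$-diffeomorphism together with uniform checks on the new data. A secondary technicality is the $C^{1,\alpha_0^-}$ regularity of the limiting profile across a merely $C^{1,\alpha_0}$ interface, which relies on Assumption \ref{Assumption3} and the transmission regularity theory of \cite{SS}.
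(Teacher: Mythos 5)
Your compactness--contradiction scheme is not the paper's argument, and as written it has two genuine gaps. First, the logical setup does not capture the statement. For a \emph{fixed} $\tau_0>0$ the negation of the lemma with constant $C=k$ forces $\|u_k-v\|_{L^\infty(B_{3/4-\tau_0})}>k\,\tau_0^\beta\to\infty$ for every admissible $v$, which already contradicts $\|u_k\|_{L^\infty}\le 1$ with $v\equiv 0$ --- so the contradiction you reach at the end uses none of the analysis. The content of the lemma is the uniformity of the rate $C(\tau^\beta+\delta)$ as $\tau\to 0$ and $\delta\to 0$ simultaneously; negating it correctly forces $\tau_k\to 0$ as well, and then mere uniform convergence $u_k\to u_\infty$ (which carries no rate) no longer contradicts $\|u_k-v\|>k(\tau_k^\beta+\delta_k)$, since the right-hand side may itself tend to zero. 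A compactness argument can at best deliver the qualitative ``for every $\varepsilon$ there is $\delta$'' version on a fixed interior ball, not the stated additive rate, which Lemma \ref{Lm:First_it_g0} then uses quantitatively to choose $\tau$ and $\delta$ in terms of $\rho$. Second, the stability step fails: Lemma \ref{Lm:stability} requires $\Psi_k\in C^2$ with $\|\Psi_k\|_{C^2(B_1')}\to 0$, whereas here the interface is fixed and only $C^{1,\alpha_0}$. Your proposed remedy --- flattening $\Gamma$ by a $C^{1,\alpha_0}$ diffeomorphism --- is not available for second-order fully nonlinear operators: the change of variables injects second derivatives of the map (merely distributional at this regularity) and turns $F^\pm(D^2u)$ into an $x$-dependent operator $F^\pm(D^2u,Du,x)$, which is exactly the generality the authors state they must avoid in this section because differences of solutions then need not lie in the $S$ class. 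Indeed the paper explicitly notes that the stability result of Section \ref{Section:approximation} is \emph{not} used to prove the main theorems.

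The paper's actual proof is direct and avoids all of this. One sets $F_\varepsilon(M,x)=h_\varepsilon(x)F^+(M)+(1-h_\varepsilon(x))F^-(M)$ with $h_\varepsilon$ a smooth cutoff across an $\varepsilon$-strip around $\Gamma$, solves the uniformly elliptic Dirichlet problem $F_\varepsilon(D^2v_\varepsilon,x)=0$ in $B_{3/4}$ with $v_\varepsilon=u$ on $\partial B_{3/4}$, and uses Assumption \ref{Assumption3} (oscillation in $x$ bounded by $\theta$) to get uniform $C^{1,\bar\gamma}_{loc}\cap C^\beta(\overline{B_{3/4}})$ estimates; the limit $v=\lim v_\varepsilon$ solves $F^\pm(D^2v)=0$ in $\Omega^\pm_{3/4}$ and is $C^1$ across $\Gamma$. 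Then $w=u-v$ satisfies $w\in S_\omega(f^\pm)$, $w_\nu^+-w_\nu^-=g$, and $w=0$ on $\partial B_{3/4}$, so the global $C^\beta$ bound gives $\|w\|_{L^\infty(\partial B_{3/4-\tau})}\le C\tau^\beta$ and the degenerate ABP (Theorem \ref{Thm:deg_ABP}) yields precisely $\|u-v\|_{L^\infty(B_{3/4-\tau})}\le C(\tau^\beta+\delta)$. If you want to salvage a compactness proof you would have to restate the lemma qualitatively and replace the stability step by a direct verification (via half-relaxed limits and Theorem \ref{Thm:S-dif}-type arguments) that the limit solves the homogeneous transmission problem on the fixed $C^{1,\alpha_0}$ interface; but the direct construction is both shorter and gives the sharper conclusion.
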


\begin{proof}
We proceed very similarly to \cite[Lemma 5.2]{SS}.

Fix $0<\delta<1$, and $\theta>0$ is given by \ref{Assumption3}. Given $\varepsilon>0$ small, for $x\in B_1$ we define
\[
F_\varepsilon(M,x)=h_\varepsilon(x)F^+(M)+(1-h_\varepsilon(x))F^-(M),
\]
where  $h_\varepsilon\in C^\infty(B_1)$, $0\leq h_\varepsilon(x)\leq 1$ and
\begin{align*}
        h_\varepsilon(x)=
    \begin{cases}
    1 & \mbox{ if } x\in B_1\cap \{x_d>\Psi(x')+\varepsilon\}\\
    0 & \mbox{ if } x\in B_1\cap \{x_d<\Psi(x')-\varepsilon\}.
    \end{cases}
\end{align*}
Note that $F_\varepsilon\in \mathcal{E}(\lambda,\Lambda)$ and $F_\varepsilon(0,x)\equiv0$. Let $v_\varepsilon$ a solution to 
\[
 \begin{cases}
    F_\varepsilon(D^2 v_\varepsilon, x) =0& \mbox{in } B_{3/4}\\
    v_\varepsilon=u & \mbox{ on }\p B_{3/4}.
    \end{cases}
\]
Arguing as in the proof of \cite[Lemma 5.2]{SS}, we have that $v_\varepsilon \in C_{loc}^{1,\bar \gamma}(B_{3/4}) \cap C^{\beta}(\overline{B_{3/4}})$ for every $\bar \gamma < \alpha_0, \beta= \beta_0 /2$ with universal estimates.
Thus by compactness $v_{\varepsilon} \to v$ in $C_{loc}^{1,\gamma}(B_{3/4})\cap C^{\beta}(\overline{B_{3/4}})$ as $\varepsilon \to 0$ for every $\gamma < \bar \gamma$. By closedness of viscosity solutions under uniform limits, we also have that $v$ solves
\[
F^\pm (D^2 v)=0 \quad \mbox{ in } \Omega_{3/4}^\pm.
\]
Defining $w=u-v$, since $u \in C^{\beta}(\overline{B_{3/4}})$ from Proposition \ref{Pro:Holder_global}, then $w \in C^{\beta}(\overline{B_{3/4}})$ and $w=0$ on $\p B_{3/4}$. Thus for $0 < \tau < 1/4 $,
\[
\| w\|_{L^{\infty}(\p B_{3/4-\tau})} \leq [w]_{C^{\beta}(\overline{B_{3/4}})} \tau^\beta \leq C \tau^\beta, 
\]
with $C$ universal. Furthermore, we are able to prove that 
\[
\begin{cases}
    w \in S_{\omega}(f^\pm) & \mbox{in } \Omega_{3/4 - \tau}^\pm \\ 
    w_{\nu}^+ - w_{\nu}^- =g  & \mbox{on } \Gamma_{3/4-\tau}^\pm.
\end{cases}
\]
The second follows immediately since $v$ satisfies $v_\nu^+ - v_\nu^-=0$ in $\Gamma_{3/4}$ in the classical sense. The first condition can be checked directly: take for example $x_0 \in \Omega_{3/4}^+$ and let $r:=\frac{1}{2}\dist(x_0, \Gamma_{3/4})$,
then $|x_d-\Psi(x')|^{a(x)} \geq c > 0$ hence
\begin{align*}
    F^+(D^2 u) &= f^+ |x_d-\Psi(x')|^{-a(x)} \in L^\infty \quad \mbox{in } B_{r}(x_0) \\
    F^+(D^2 v)&= 0 \quad \mbox{in } B_{r}(x_0),
\end{align*}
which means $w \in S_{\omega}(f^+)$ in $B_{r}(x_0)$. Now, from the ABP estimate Theorem \ref{Thm:deg_ABP} and \eqref{eq:smallness} we conclude that $\| u- v \|_{L^\infty(B_{3/4- \tau})} \leq C(\tau^\beta + \delta).$
 
\end{proof}
 For $\mu > 0$ define the following operator
\begin{align}\label{eq:notation}
    F_\mu^\pm(M,x)= |x_d - \mu^{-1}\Psi (\mu x') |^{a(\mu x)} F^\pm(D^2 M).
\end{align}
Now we verify that it is possible to assume, without loss of generality, that the smallness assumption \eqref{eq:smallness} holds.
Let $u$ to be a viscosity solution of \eqref{eq:main} and consider
\[ 
\tilde u (y) = \frac{u(ry)}{r^2 K}
\]
with $$K:= r^{-2} \|u\|_{L^\infty(B_1)} + \left(\frac{\delta}{3}\right)^{-1} \left[ r^{-\bar a}(\|f^-\|_{L^\infty(\Omega^-)}+\|f^+\|_{L^\infty(\Omega^+)}) + r^{-1}   \|g\|_{C^{\alpha_0}(\Gamma)}\right]. $$
Then $\tilde u$ solves 
\begin{align*}
\begin{cases}
\displaystyle
       \frac{1}{ K} F^\pm(KD^2 \tilde u , ry )=  \frac{f^\pm(r y)}{K} r^{-a(rx)}, \quad & y \in B_{1/r}\cap\{\pm (r y_d-\Psi(r y'))>0\},  \\
      \displaystyle  \tilde u^+_\nu (y)- \tilde u^-_\nu(y)=\frac{g(r y)}{rK}, & y \in B_{1/r}\cap \{r y_d=\Psi(r y')\}.
    \end{cases}
\end{align*}
Which can be rewritten as
\begin{align*}
\begin{cases}
\displaystyle
      \tilde{F}_{r}^\pm(D^2 \tilde u , y )=  \tilde f, \quad & y \in B_{1/r}\cap\{\pm ( y_d-r^{-1}\Psi(r y'))>0\},  \\
      \displaystyle  \tilde u^+_\nu- \tilde u^-_\nu=\bar g, &y \in B_{1/r}\cap \{ y_d=r^{-1}\Psi(r y')\}.
    \end{cases}
\end{align*}
Thus, choosing $r \leq \delta$, one can check that the smallness regime is verified.
Moreover, possibly choosing $r$ smaller, if we define $\tilde \Psi (y')= \frac{\Psi(ry')}{r}$ we have
\begin{equation*}
    [\tilde \Psi]_{C^{1,\alpha_0}(0)}= \sup_{y' \in B'_1, y' \neq 0}   \frac{|D'  \tilde\Psi(y')|}{|y'|^{\alpha_0}}    = \sup_{y' \in B'_1, y' \neq 0} \frac{|D' \Psi(r y')|}{|y'|^{\alpha_0}} \leq r^{\alpha_0}   [\Psi]_{C^{1,\alpha_0}(0)} \leq 1.
\end{equation*}

\section{Gradient regularity} \label{Section:Gradient_reg}

In this section we obtain pointwise $C^{1,\alpha(x_0)}(x_0)$ regularity at points in the interface $x_0\in \Gamma$. For simplicity and without loss of generality, we assume that $x_0=0$. The next result constitutes the first geometric iteration. Note however that the approximation function $\ell$ is affine even across the interface. This is a consequence of the smallness assumption on $g$.

\begin{Lemma}\label{Lm:First_it_g0}
Let $u$ be a viscosity solution of \eqref{eq:main}, $ 0 <\gamma < \alpha_0$ and $\delta_1$ be given by in Remark \ref{Rmk:mod_cont_exp}, then there exist $\delta>0$ and $ \rho \leq \delta_1$ universal such that if \eqref{eq:smallness} holds, then there is an affine function $\ell(x)=A\cdot x + b$, with $|A|+|b|\leq C_0$, such that
\begin{align*}
\| u - \ell \|_{L^\infty(B_\rho)} &\leq \frac{\rho^{1+\gamma}}{2}. 
\end{align*} 
\end{Lemma}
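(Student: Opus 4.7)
The strategy is a standard compactness/approximation argument: compare $u$ to a more regular limiting profile supplied by Lemma \ref{Lm:approxalpha0}, then extract the affine function from the interior $C^{1,\alpha_0^-}$ regularity of this profile at the origin.

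First I would fix an auxiliary exponent $\gamma'$ with $\gamma < \gamma' < \alpha_0$. Invoking Lemma \ref{Lm:approxalpha0} with parameters $\tau$ and $\delta$ to be determined, I obtain a function $v\in C^{1,\alpha_0^-}_{loc}(B_{3/4})\cap C^{0,\beta}(\overline{B_{3/4}})$ satisfying $F^\pm(D^2v)=0$ in $\Omega_{3/4}^\pm$ with zero transmission condition, together with
\[
\|u-v\|_{L^\infty(B_{3/4-\tau})}\leq C_1(\tau^\beta+\delta), \qquad \beta=\beta_0/2,
\]
for a universal constant $C_1$. Since $0$ is interior to $B_{3/4}$ and $v\in C^{1,\alpha_0^-}_{loc}(B_{3/4})$, the affine polynomial $\ell(x)=v(0)+Dv(0)\cdot x$ satisfies
\[
\|v-\ell\|_{L^\infty(B_\rho)}\leq C_2\rho^{1+\gamma'}
\]
for all $\rho\leq 1/2$, with $|v(0)|+|Dv(0)|\leq C_3$ universal (here $C_2,C_3$ depend only on $d,\lambda,\Lambda,\alpha_0,\gamma'$, since $\|v\|_{L^\infty(B_{3/4})}\leq \|u\|_{L^\infty(B_1)}\leq 1$ via the comparison principle and Lemma \ref{Lm:approxalpha0}).

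Next I combine the two estimates. The triangle inequality gives, for any $\rho\leq 3/4-\tau$,
\[
\|u-\ell\|_{L^\infty(B_\rho)}\leq C_1(\tau^\beta+\delta)+C_2\rho^{1+\gamma'}.
\]
I then fix the parameters in the following order. Choose $\rho\leq \min\{\delta_1,1/2\}$ so small that $C_2\rho^{\gamma'-\gamma}\leq 1/4$ (possible because $\gamma'>\gamma$), which forces $C_2\rho^{1+\gamma'}\leq \rho^{1+\gamma}/4$. With $\rho$ fixed and universal, choose $\tau$ small enough so that $C_1\tau^\beta\leq \rho^{1+\gamma}/8$; with $\tau$ now universal, choose $\delta$ small enough so that $C_1\delta\leq \rho^{1+\gamma}/8$. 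Together these give $\|u-\ell\|_{L^\infty(B_\rho)}\leq \rho^{1+\gamma}/2$, and $|A|+|b|\leq C_3=:C_0$.

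The main subtlety is tracking that all chosen constants are universal. In particular, one must ensure the smallness threshold $\delta$ does not depend on $u$, which is guaranteed because $\rho$ is chosen before $\tau$ and $\delta$, and depends only on $C_2, \gamma, \gamma', \delta_1$. A secondary check is that the $C^{1,\alpha_0^-}$ bounds for $v$ at $0$ are genuinely universal; this follows from standard interior estimates for $F^\pm(D^2v)=0$ combined with the transmission regularity result already used in Lemma \ref{Lm:approxalpha0}, and from the universal $L^\infty$ bound on $v$ inherited from $u$. Everything else is a straightforward triangle inequality argument.
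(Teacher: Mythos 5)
Your proposal is correct and follows essentially the same route as the paper: invoke Lemma \ref{Lm:approxalpha0} to produce the regular profile $v$, take $\ell(x)=v(0)+Dv(0)\cdot x$ from the interior $C^{1,\bar\gamma}$ regularity of $v$ with an intermediate exponent $\gamma<\bar\gamma<\alpha_0$, and then fix $\rho$ first (universally, using $\bar\gamma>\gamma$ and $\rho\le\delta_1$) before choosing $\tau$ and $\delta$. The only differences are cosmetic (the paper takes $\bar\gamma=(\gamma+\alpha_0)/2$ and splits the error into thirds rather than your $1/4,1/8,1/8$).
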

\begin{proof}
 Take  $\delta >0$ to be fixed, so that if the smallness assumption \eqref{eq:smallness} is satisfied, then there exists $v \in C_{loc}^{1,\bar\gamma}(B_{3/4})$ where $\bar\gamma=(\gamma+\alpha_0)/2$, such that 
    \begin{equation*}
        \|u-v\|_{L^\infty(B_{3/4-\tau})}\leq C(\tau^\beta+\delta),
    \end{equation*}
    for every $0<\tau<1/4$.
    Now, let
    \begin{equation*}
        \ell(x) = v(0) + D v(0) \cdot x 
    \end{equation*}
    and compute 
    \begin{equation*}
        \sup_{B_{\rho}} |u(x) -  \ell(x) | \leq \sup_{B_{\rho}} |v(x) -  \ell(x) | + \sup_{B_{\rho}} |u(x) -  v(x)| \leq C \rho^{1+\bar\gamma} + C(\tau^\beta+\delta), 
    \end{equation*}
    where $C>0$ is a universal constant. Now, we choose $\rho$, $\tau$ and $\delta$ universal as
    \begin{equation*}
        \rho= \min\left\{ \left( \frac{1}{6C}\right)^{\frac{2}{\alpha_0-\gamma}}, \delta_1 \right\} \quad \tau=\left(\frac{\rho^{1+\gamma}}{6C}\right)^\frac{1}{\beta}\quad \delta=\frac{\rho^{1+\gamma}}{6C}
    \end{equation*}
    and we get
    \begin{align*}
\| u - \ell \|_{L^\infty(B_\rho)} &\leq \frac{\rho^{1+\gamma}}{2},
\end{align*} 
as intended.
\end{proof}

Now we proceed with the geometric iterations which will imply pointwise $C^{1,\alpha(\cdot)}$ regularity at points in the interface from either side, making use of Proposition \ref{Pro:new_holder}. 

\begin{Proposition}\label{Pro:geom_ite}
    Let $u$ be a viscosity solution of \eqref{eq:main}. There exist a nondecreasing sequence $(\alpha_k)_k$, universal constants $\delta$ and $\rho$ such that if the smallness assumption \eqref{eq:smallness} holds, then there are sequences of affine functions
    \[
    \ell_k^\pm(x)=\eta_k+\zeta_k^\pm\cdot x, \quad x\in \Omega_k^\pm:=B_{\rho^k}\cap\{\pm(x_d-\Psi(x'))>0\}
    \]
    such that
    \begin{align}\label{eq:geom_it}
        \|u-\ell_k^\pm\|_{L^\infty(\Omega_k^\pm)}\leq \rho^{k(1+\alpha_k)}
    \end{align}
    where 
    \begin{align*}
        \ell_k^+(x)-\ell_k^-(x)=g(0)\nu(0) \cdot x
    \end{align*}
    and
    \begin{align}\label{eq:geom_coef}
        |\eta_k-\eta_{k-1}|+\rho^k|\zeta_k^\pm-\zeta_{k-1}^\pm|\leq C\rho^{(k-1)(1+\alpha_{k-1})}.
    \end{align}
    Furthermore, the sequence $(\alpha_k)_k$ converges to 
\begin{align}\label{eq:geom_alpha}
    \alpha=\min\left\{ \alpha_0^- , 1-a(0) \right\},
\end{align}
    and
    \begin{align}\label{eq:geom_convergence}
        \limsup_{k\to \infty} k(\alpha-\alpha_k)=0.
    \end{align}
\end{Proposition}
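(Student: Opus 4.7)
My plan is to prove Proposition \ref{Pro:geom_ite} by induction on $k$, using Lemma \ref{Lm:First_it_g0} as the single-step approximation at each dyadic scale $\rho^k$ and rescaling the solution back to $B_1$ between steps. The base case $k=0$ is trivial with $\ell_0^{\pm} \equiv 0$, and $k=1$ is Lemma \ref{Lm:First_it_g0} applied with an exponent $\gamma_1$ slightly below $\alpha = \min\{\alpha_0^-, 1-a(0)\}$ together with \eqref{eq:smallness}. We may assume after a rotation that $\nu(0) = e_d$ and $\nabla\Psi(0) = 0$.

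For the inductive step, suppose \eqref{eq:geom_it}--\eqref{eq:geom_coef} hold up to step $k$, and define the blow-up
\[
v(y) = \frac{u(\rho^k y) - \ell_k^{\pm}(\rho^k y)}{\rho^{k(1+\alpha_k)}}, \qquad y \in \tilde\Omega_k^{\pm} := B_1 \cap \{\pm(y_d - \tilde\Psi_k(y')) > 0\},
\]
with $\tilde\Psi_k(y') = \rho^{-k}\Psi(\rho^k y')$. By hypothesis $\|v\|_{L^\infty(B_1)} \leq 1$, and a direct scaling computation using the $1$-homogeneity of $F^{\pm}$ shows that $v$ solves a transmission problem of the same form as \eqref{eq:main} with rescaled interface $\tilde\Psi_k$ (obeying $[\tilde\Psi_k]_{C^{1,\alpha_0}} \leq \rho^{k\alpha_0}[\Psi]_{C^{1,\alpha_0}} \leq 1$), variable exponent $\tilde a_k(y) = a(\rho^k y)$, source $\tilde f^{\pm}(y) = \rho^{k(1-\alpha_k - \tilde a_k(y))} f^{\pm}(\rho^k y)$, and jump
\[
\tilde g_k(y) = \rho^{-k\alpha_k}\bigl[g(\rho^k y) - g(0)\,\nu(0) \cdot \tilde\nu_k(y)\bigr].
\]
The $C^{\alpha_0}$ regularity of $g$ and of $\nu$ (from $\Psi \in C^{1,\alpha_0}$) gives $\|\tilde g_k\|_{C^{\alpha_0}} \leq C\rho^{k(\alpha_0 - \alpha_k)}$, while Remark \ref{Rmk:mod_cont_exp} controls the defect $\rho^{-k(\tilde a_k - a(0))}$ uniformly in $k$, delivering $\|\tilde f^{\pm}\|_{L^\infty} \leq C\rho^{k(1-\alpha_k - a(0))}\|f^{\pm}\|_{L^\infty}$. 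Both quantities stay below the smallness threshold $\delta$ of Lemma \ref{Lm:First_it_g0} provided $\alpha_k < \alpha$. Applying Lemma \ref{Lm:First_it_g0} to $v$ with exponent $\gamma_{k+1}$ chosen close to $\alpha$ yields an affine $L(y) = A \cdot y + b$ with $|A|+|b| \leq C_0$ and $\|v - L\|_{L^\infty(B_\rho)} \leq \rho^{1+\gamma_{k+1}}/2$. Setting
\[
\ell_{k+1}^{\pm}(x) = \ell_k^{\pm}(x) + \rho^{k(1+\alpha_k)} L(x/\rho^k), \qquad \alpha_{k+1} = \frac{k\alpha_k + \gamma_{k+1}}{k+1},
\]
and unwinding the rescaling produces \eqref{eq:geom_it} at step $k+1$. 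The coefficient bound \eqref{eq:geom_coef} follows from $|A|+|b| \leq C_0$; the jump $\ell_{k+1}^{+} - \ell_{k+1}^{-} = g(0)\nu(0)\cdot x$ is preserved because $L$ is a single, non-jumping affine function --- which in turn reflects the fact that $v$ is approximated, via Lemma \ref{Lm:approxalpha0}, by a solution of the homogeneous problem ($f=0$, $g=0$), which is $C^{1,\alpha_0^-}$ across the flattening interface $\tilde\Gamma_k$ and therefore admits a single affine approximation at the origin.

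The delicate part, and the main obstacle, is engineering the sequence $(\gamma_{k+1})$ so that $(\alpha_k)$ is nondecreasing, converges to $\alpha$, and satisfies the sharpness $\limsup k(\alpha - \alpha_k) = 0$. Here Remark \ref{Rmk:mod_cont_exp} enters decisively: the uniform bound $k\ln(1/\rho)\gamma_a(\rho^k) \leq \varepsilon_1 = (\alpha_0 - \alpha)/2$ controls at every scale the defect produced by replacing $\tilde a_k(y)$ by $a(0)$ in the source rescaling, so the application of Lemma \ref{Lm:First_it_g0} is uniform in $k$. One can take $\gamma_{k+1} = \alpha$ for all $k$ when $\alpha = 1-a(0) < \alpha_0$ (yielding $\alpha_k = \alpha$ for $k \geq 1$ and trivial sharpness), or $\gamma_{k+1} \uparrow \alpha_0$ at a rate compatible with the modulus of $a$ when $\alpha = \alpha_0^-$; the averaging recursion $\alpha_{k+1} = [k\alpha_k + \gamma_{k+1}]/(k+1)$ then delivers both \eqref{eq:geom_alpha} and \eqref{eq:geom_convergence}. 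Once the iteration is established, Proposition \ref{Pro:new_holder} converts it into pointwise $C^{1,\alpha(0)}$ regularity at $0 \in \Gamma$, and classical interior regularity away from the interface patches the estimate to yield the full $C^{1,\alpha(\cdot)}$ bound of Theorem \ref{Thm:main}.
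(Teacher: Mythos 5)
Your overall strategy (rescale by $\rho^k$, apply the one--step approximation of Lemma \ref{Lm:First_it_g0}, and keep careful track of the exponents via Remark \ref{Rmk:mod_cont_exp}) is the same as the paper's, and most of the bookkeeping — the bound on $\tilde g_k$, the preservation of the jump $\ell_{k+1}^+-\ell_{k+1}^-=g(0)\nu(0)\cdot x$ because the correction $L$ does not jump, and the use of $k\gamma_a(\rho^k)\le\varepsilon_1$ to control $\rho^{-k(a(\rho^k y)-a(0))}$ — is sound. Your exponent recursion $\alpha_{k+1}=[k\alpha_k+\gamma_{k+1}]/(k+1)$ is a legitimate alternative to the paper's direct choice $\alpha_k=\min\{\alpha_0^-,\,1-\sup_{B_{\rho^k}}a\}$; note, however, that the paper's choice makes the source exponent $1-\alpha_k-a(\rho^kx)\ge 0$ hold identically, whereas with yours you must separately verify that $\alpha_k$ never exceeds $1-\sup_{B_{\rho^k}}a$ up to the tolerance absorbed by Remark \ref{Rmk:mod_cont_exp}.

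The genuine gap is the step ``Applying Lemma \ref{Lm:First_it_g0} to $v$.'' The rescaled function $v$ is defined piecewise by subtracting $\ell_k^+$ on $\tilde\Omega_k^+$ and $\ell_k^-$ on $\tilde\Omega_k^-$, and since $\ell_k^+-\ell_k^-=g(0)x_d$ does not vanish on the curved interface $\{y_d=\tilde\Psi_k(y')\}$, the function $v$ has a jump of size $|g(0)|\,|\tilde\Psi_k(y')|\,\rho^{-k\alpha_k}$ across $\tilde\Gamma_k$. This jump is small (bounded by $\delta$, using $[\Psi]_{C^{1,\alpha_0}}\le 1$ and $\alpha_k<\alpha_0$) but nonzero, so $v\notin C(B_1)$ and neither Lemma \ref{Lm:First_it_g0} nor Lemma \ref{Lm:approxalpha0} applies to it as stated — both require a continuous viscosity solution of \eqref{eq:main}. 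The paper resolves exactly this point by introducing an auxiliary function $w\in C(B_1)$ solving the homogeneous rescaled equation in $\tilde\Omega_k^\pm$ with Dirichlet data $\tfrac12(v_k^++v_k^-)$ on $\tilde\Gamma_k$ and $v_k$ on $\partial B_1$, showing via the ABP estimate and the boundary gradient estimates of \cite{BGMW} that $\|v_k^\pm-w\|_{L^\infty(\tilde\Omega_k^\pm)}\le C\delta$, and then applying Lemma \ref{Lm:First_it_g0} to $w$ rather than to $v_k^\pm$. Without this (or an equivalent) intermediate construction, your inductive step does not close.
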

\begin{proof}
    We argue by induction. Let $\delta$ and $\rho$ be given by Lemma \ref{Lm:First_it_g0},
    \[
    a_k:=\sup_{x\in B_{\rho^k}} a(x) 
    \]
    and define the nondecreasing sequence
    \[
    \alpha_k:=\min\left\{ \alpha_0^-, 1-a_k  \right\}
    \]
    which converges to $\alpha$ in \eqref{eq:geom_alpha}. Furthermore, by assumption \ref{Assumption2} as well as Remark \ref{Rmk:mod_cont_exp}, it is a simple calculation to see that \eqref{eq:geom_convergence} also holds. 
    Define also
    \begin{align*}
        \tilde \Omega_k^\pm&\,=B_1\cap\{\pm(x_d-\rho^{-k}\Psi(\rho^{k}x'))>0\},\\
        \tilde \Gamma_k&\,=B_1 \cap \{(x_d-\rho^{-k}\Psi(\rho^{k}x'))=0\}.
    \end{align*}

    Let $\ell_0\equiv 0$, $\ell_1$ be given by Lemma \ref{Lm:First_it_g0} and define
    \[
    \ell_1^\pm(x)=\ell_1(x)\pm \frac{g(0)}{2}\nu(0) \cdot x
    \]
    where $\nu(0)$ is the normal of $\Gamma$ at $0$  pointing towards $\Omega^+$, which we are assuming, without loss of generality, that $\nu(0)=e_d$. By Lemma \ref{Lm:First_it_g0},
    \begin{equation*}
        \|u-\ell_1^{\pm} \|_{L^{\infty}(B_{\rho})} \leq \frac{\rho^{1+\gamma}}{2} + \|g \|_{L^{\infty}(B_{\rho})}  \leq 
        \frac{\rho^{1+\gamma}}{2} + \delta \leq \rho^{1+ \gamma},
    \end{equation*}
    since we can assume $\delta \leq  \frac{\rho^{1+\gamma}}{2}$. This concludes the first step of the induction, since $\gamma>\alpha_1$ and \eqref{eq:geom_coef} can be verified directly from Lemma \ref{Lm:First_it_g0}.

    To prove the induction step,  define
    \[ 
    v_k^\pm(x) = \frac{(u - \ell_k^\pm)(\rho^k x)}{\rho^{k(1+\alpha_k)}}\quad x\in B_1,
    \]
    where $\ell_k^+(x)-\ell_k^-(x)=g(0)\nu(0) \cdot x$. The function $v_k^\pm$ solves in $\tilde \Omega_k^\pm$ the equation
\begin{align*}
         \left| x_d - \rho^{-k}\Psi(\rho^k x') \right|^{a(\rho^k x)} \rho^{k(1-\alpha_k)} F^\pm(\rho^{k(-1+\alpha_k)}D^2 v_k^\pm) \\
        =\rho^{k(1-\alpha_k-a(\rho^k x))} f^\pm(\rho^k x).
\end{align*}
    Note that $1-\alpha_k - a(\rho^k x) \geq a_k - a(\rho^k x)  \geq 0 $ from the definition of $\alpha_k$ and $a_k$. Thus, denoting with
    \[
     F_k^\pm(M) := \rho^{k(1-\alpha_k)} F^\pm(\rho^{k(-1+\alpha_k)} M)
    \]
 and recalling the notation in \eqref{eq:notation} we can write compactly the equation satisfied by $v_k$ as 
\[
(F_k^\pm)_{\rho^k}(D^2 v_k^\pm, x) = f_k^\pm  \quad \text{in }\tilde \Omega_k^\pm
\]
where  $\|f_k^\pm \| \leq \delta$.
Whereas for the transmission condition we have 
\[ 
(v_k^\pm)_\nu^+ - (v_k^\pm)_\nu^- = \frac{g(\rho^k x)- g(0)\nu(0)\cdot \nu(\rho^k x)}{\rho^{k\alpha_k}} = g_k(x)\quad x\in \tilde \Gamma_k,
\]
with 
\begin{align*}
    |g_k(x)| \leq&\, \frac{| g(\rho^k x)- g(0)|}{\rho^{k \alpha_k}} + \frac{|g(0)- g(0)\nu(0)\cdot\nu(\rho^k x)|}{\rho^{k\alpha_k}} \\
    \leq&\, [g]_{C^{\alpha_0}(\Gamma)} + |g(0)|[\Psi]_{C^{1,\alpha_0}(B'_1) }\leq \delta.
\end{align*}

Similarly, we check that $[g_k]_{C^ {\alpha_0}(\tilde \Gamma_k)}\leq \delta$.

Note however that we can not apply Lemma \ref{Lm:First_it_g0} to $v_k^\pm$ since it is discontinuous across the interface. Therefore, we proceed as in \cite[Proof of Theorem 6.1]{SS}. Indeed for $x \in \tilde \Gamma_k$, we have
\begin{align*}
    &\,|(v_k^+-v_k^-)(x)|=\frac{|\ell_k^+(\rho^k x)-\ell_k^-(\rho^k x)|}{\rho^{k(1+\alpha_k)}}=\frac{|g(0)\nu(0)\cdot x|}{\rho^{k\alpha_k}}=\frac{|g(0)x_d|}{\rho^{k\alpha_k}}\\
    \leq &\,|g(0)|\,\sup_{x\in  \tilde\Gamma_k}\frac{|x_d|}{\rho^{k\alpha_k}}=|g(0)|\,\sup_{x\in B'_1}\frac{|\Psi(\rho^k x')|}{\rho^{k(1+\alpha_k)}}\leq \|g\|_{C^{\alpha_0}( \Gamma)}\,[\Psi]_{C^{1,\alpha_0}(B'_1)}\leq \delta
\end{align*}

Consider now, $w \in C(B_1)$ the viscosity solution of
\begin{equation}
    \begin{cases}
(F_k^\pm)_{\rho^k}(D^2 w, x) = 0  &\quad \text{in }\tilde\Omega_k^\pm \\
w=\frac{1}{2}(v_k^+ + v_k^-)&\quad \text{on }\tilde\Gamma_k\\
w=v &\quad \text{on }\p B_1
    \end{cases}
\end{equation}
We will prove that $w$ satisfies the assumptions of Lemma \ref{Lm:First_it_g0}. By the maximum principle, $\|w\|_{L^\infty(B_1)}\leq \|v_k^\pm\|_{L^\infty(B_1)}\leq 1$. Furthermore, from \cite[Theorem 3]{JS} we get that $v_k^\pm-w^\pm\in S_\omega(f_k^\pm)$ in $\tilde\Omega_k^\pm$ and from the ABP in \cite[Proposition 1]{JS}, we get
\begin{align*}
    \|v_k^\pm-w^\pm\|_{L^\infty(\tilde \Omega_k^\pm)}\leq \|v_k^\pm-w^\pm\|_{L^\infty(\tilde \Gamma_k^\pm)}+C\|f_k^\pm\|_{L^d_\omega(\tilde\Omega_k^\pm)}\leq C\delta
\end{align*}
By \cite[Corollary 3.3]{BGMW}, we get pointwise boundary $C^{1,\alpha}$ estimates and thus for every $x_0\in \tilde \Gamma_k$ we have
\begin{align*}
    |D(v_k^\pm-w^\pm)(x_0)|\leq C\left( \|v_k^\pm-w^\pm\|_{L^\infty(\tilde\Omega_k^\pm)}+\frac{1}{2} \rho^{-k\alpha_k}|g(0)|\,\|\Psi_k\|_{C^{1,\alpha_0}(x_0)}+\|f_k^\pm\|_{L^p_\omega(\tilde\Omega_k^\pm)}\right),
\end{align*}
where $\Psi_k(x)=\rho^{-k}\Psi(\rho^kx)$ satisfies $\|\Psi_k\|_{C^{1,\alpha_0}(x_0)}\leq \rho^{k\alpha_0}\|\Psi\|_{C^{1,\alpha_0}(B_1')}$, with $\alpha_0>\alpha_k$ and $|g(0)|\leq \delta$.

Proceeding as in \cite{SS} we check that $w$ satisfies the assumptions of Lemma \ref{Lm:First_it_g0} and hence there exists an affine function $\bar \ell_{k+1}$ such that
\[
\|v-\bar\ell_{k+1}\|_{L^\infty(B_\rho)}\leq \|v-w\|_{L^\infty(B_\rho)}+\|w-\bar\ell_{k+1}\|_{L^\infty(B_\rho)}\leq C\delta+\frac{\rho^{1+\gamma}}{2}\leq \rho^{1+\gamma}
\]
for any $\gamma < \alpha_0$. Recalling the definition of $v_k$ and rescaling back, we get
\begin{align*}
\left\| \frac{(u -\ell_k^\pm)(\rho^k x) }{\rho^{k(1+\alpha_k)}}-  \bar \ell_{k+1}(x) \right\|_{L^{\infty}(B_\rho)} &\leq \rho^{1+\gamma}  \\ 
\left\| u(y) - \ell_k^\pm (y) -  \rho^{k(1+\alpha_k)} \bar \ell_{k+1}(\rho^{-k} y) \right\|_{L^{\infty}(B_{\rho^{k+1}})} &\leq \rho^{1+\gamma+k(1+\alpha_k)}
\end{align*}
 hence defining $\ell_{k+1}^\pm(y) = \ell_k^\pm (y) +  \rho^{k(1+\alpha_k)} \bar \ell_{k+1}(\rho^{-k} y) $ we finally obtain
\begin{align*}
\| u - \ell_{k+1}^\pm \|_{L^\infty(B_{\rho^{k+1}})} &\leq \rho^{1+\gamma+k(1+\alpha_k)} \leq \rho^{(k+1)(1+\alpha_{k+1})}
\end{align*}
where in the last inequality we have exploited the monotonicity of $\alpha_k$ and the fact that we can choose $\gamma \geq \alpha_{k+1}$. This concludes the induction step for \eqref{eq:geom_it}. 

Finally, \eqref{eq:geom_coef} holds immediately from the bounds on the coefficients of $\bar \ell_{k+1}$ obtained in Lemma \ref{Lm:First_it_g0}.
\end{proof}

In order to get the full regularity up to the interface of Theorem \ref{Thm:main} we patch the pointwise regularity at the interface of Proposition \ref{Pro:geom_ite} and the interior $C^{1,\alpha(\cdot)}$ regularity, since $\alpha(\cdot) < \alpha_0$, proceeding as in \cite{JS} with the obvious changes, see also \cite[Proposition 2.3]{MS}.

\section{Acknowledgements}
The authors would like to thank Diego Moreira, Yannick Sire and María Soria-Carro for fruitful
conversations on the topic of this paper.

\end{document}